\newcommand{\s}[1]{{\mathcal #1}}
\newcommand{\sr}[1]{{\mathscr #1}}
\newcommand{\bb}[1]{{\mathbb #1}}
\newtheorem{theorem}{Theorem}[section]
\newtheorem{corollary}[theorem]{Corollary}
\newtheorem{conjecture}[theorem]{Conjecture}
\newtheorem{lemma}[theorem]{Lemma}
\newtheorem{proposition}[theorem]{Proposition}
\newtheorem{problem}[theorem]{Problem}
\newtheorem{definition}[theorem]{Definition}
\newtheorem{remark}[theorem]{Remark}
\numberwithin{equation}{section}
\numberwithin{theorem}{section}
\title{Optimal control of first-order Hamilton-Jacobi equations with linearly bounded Hamiltonian}
\author{P. Jameson Graber\footnote{Commands team (ENSTA ParisTech, INRIA Saclay), 828, Boulevard des Maréchaux, 91762 Palaiseau Cedex, Email: philip.graber@inria.fr}}
\begin{document}

\maketitle

\begin{abstract}
We consider the optimal control of solutions of first order Hamilton-Jacobi equations, where the Hamiltonian is convex with linear growth. This models the problem of steering the propagation of a front by constructing an obstacle. We prove existence of minimizers to this optimization problem as in a relaxed setting and characterize the minimizers as weak solutions to a mean field game type system of coupled partial differential equations. Furthermore, we prove existence and partial uniqueness of weak solutions to the PDE system. An interpretation in terms of mean field games is also discussed.

Keywords: Hamilton-Jacobi equations, optimal control, nonlinear PDE, viscosity solutions, front propagation, mean field games
\end{abstract}

\tableofcontents

\section{Introduction} \label{sec:introduction}

Let $\Omega_t \subset \bb{R}^N$ be an evolving set with $\Gamma_t := \partial \Omega_t$ modeling a front propagating in space. A simple model of front propagation is in terms of reachable sets for an open-loop controlled dynamical system 
\begin{equation} \label{eq:open_loop}
\dot{y}(\tau) = c(y(\tau),\alpha(\tau)), \tau \in \bb{R},
\end{equation}
where the control $\alpha(\tau)$ takes values in a compact control space $A$. Consider times $t$ in a finite time horizon $[0,T]$ and a given closed set $\Omega_T$. We consider $\Omega_t$ to be the {\em backward reachable set} at time $t$ defined by
\begin{equation}
\Omega_t := \{y(t) : y ~\text{solves}~ (\ref{eq:open_loop}), ~ \alpha \in L^\infty(t,T;A), y(T) \in \Omega_T\}.
\end{equation}
Let $u_T$ be a continuous function such that the set where $u_T \leq 0$ coincides precisely with $\Omega_T$. Then we see that $\Omega_t$ given above coincides precisely with the set
\begin{equation}
\{y(t) :  u_T(y(T)) \leq 0, ~ y ~\text{solves}~ (\ref{eq:open_loop}), ~ \alpha \in L^\infty(t,T;A) \}.
\end{equation}
In other words, $\Omega_t$ coincides with the sub-level set $\{u(t,x) \leq 0\}$, where $u$ is the value function of the simple optimal control problem
\begin{equation} \label{eq:mayer}
u(t,x) := \inf \left\{u_T(y(T)) : y ~\text{solves}~ (\ref{eq:open_loop}), ~ \alpha \in L^\infty(t,T;A), y(t) = x \right\}.
\end{equation}
This approach to modeling front propagation is known as the level-set method. In particular, if $u_T < 0$ in the interior of $\Omega_t$, then then front $\Gamma_t = \partial \Omega_t$ coincides exactly with the level set $\{u(t,x) = 0\}$. One feature of this approach is that, under classical assumptions, $u$ is a continuous viscosity solution to the following Hamilton-Jacobi equation.
\begin{equation} \label{eq:eikonal}
-u_t(t,x) + H(x,Du(t,x)) = 0, ~~~ u(T,x) = u_T(x),
\end{equation}
where the {\em Hamiltonian} is given by
\begin{equation} \label{eq:hamiltonian}
H(x,p) := \sup \{-c(x,a)\cdot p : a \in A \}.
\end{equation}
Thus solving the PDE in a viscosity sense and then tracking level sets can be said to solve the problem of observing front propagation. For more on the theory of optimal control and Hamilton-Jacobi equations, see \cite{bardi97} and references therein.

In this study, we consider the question of {\em controlling} front propagation. One approach to this problem has been studied in several papers by Bressan and others, e.g. \cite{bressan07,bressan08,bressan09,bressan09a,bressan09b}. In this approach one defines an obstacle or ``blocking strategy" $\gamma = \gamma(t)$, a continuous curve constructed in real time. Then the new reachable set is given (in this instance) by 
\begin{equation} \label{eq:dynamic_blocking}
\Omega_t^\gamma = \{y(t) : y ~\text{solves}~ (\ref{eq:open_loop}), ~ \alpha \in L^\infty(t,T;A), y(T) \in \Omega_T, y(\tau) \notin \gamma(\tau) ~ \forall ~ \tau \in [t,T]\}.
\end{equation}
One can then formulate an optimal control problem where the functional to be minimized is the construction cost of admissible blocking strategies. There are several open problems along this line of study \cite{bressan13}, but we will not consider them here. Note that in this framework the obstacles are constructed as impenetrable curves, rather than more general obstacles which may take up space. One disadvantage of this approach is a lack of a natural characterization in terms of a PDE (see, however, the work of De Lellis and Robyr \cite{delellis11}).

Here we consider another approach. Suppose we model an obstacle by a non-negative continuous function $f = f(t,x)$. We define the new reachable set to be
\begin{equation}
\tilde{\Omega}_t = \{y(t) : u_T(y(T)) + \int_t^T f(s,y(s)) \leq 0, ~ y ~\text{solves}~ (\ref{eq:open_loop}), ~ \alpha \in L^\infty(t,T;A), y(T) \in \Omega_T\}.
\end{equation}
In other words, $\tilde{\Omega}_t$ is the set of all points $x$ in the reachable set $\Omega_t$ for which the trajectory taken to reach $x$ is not too costly, where the cost is determined by $f$. It follows that $\tilde{\Omega}_t$ is precisely the sub-level set $\{\tilde{u} \leq 0\}$ where $\tilde{u}$ is given by the control problem
\begin{equation} \label{eq:value_function}
\tilde{u}(t,x) := \inf \left\{u_T(y(T)) + \int_t^T f(s,y(s)) ds : y ~\text{solves}~ (\ref{eq:open_loop}), ~ \alpha \in L^\infty(t,T;A), y(t) = x \right\}.
\end{equation}
We can again write $\tilde{u}$ as a solution to a Hamilton-Jacobi equation
\begin{equation} \label{eq:hjb}
-u_t(t,x) + H(x,Du(t,x)) = f(t,x), ~~~ u(T,x) = u_T(x).
\end{equation}
With this approach, total blocking of the front is possible under certain assumptions on the data, but for general purposes it is not necessary to think in terms of blocking (for more discussion, see Section \ref{sec:conclusion}). In any case, one can hope to steer the reachable set using $f$ as a control so that it has a minimal presence in a specified region. Suppose we are given a measure $m_0$. We can think of the sets on which $m_0$ is most concentrated (i.e. those sets with large measure) as ``danger zones," regions where we would rather the front not reach. As a general rule, the functional to be maximized is $\int \tilde{u}(T,x) dm_0(x).$
At the same time, we would like to minimize the cost of constructing $f$. We arrive at the optimization problem
\begin{problem} \label{pr:main_problem}
For a given cost functional $K$ and finite measure $dm_0$, find
\begin{equation} \label{eq:main_problem}
\inf \left\{ \int_0^T\int K(t,x,f(t,x)) dx dt - \int u(0,x)dm_0(x) \right\}
\end{equation}
over functions $u,f$ such that $u$ solves (\ref{eq:hjb}).
\end{problem}
In this paper, we will focus on the questions of existence, uniqueness, regularity, and characterization of the minimizers as solutions to a system of partial differential equations which, as we will see, has independent interest. Our main results are Theorem \ref{thm:minimizers_weak}, which characterizes minimizers of (a version of) Problem \ref{pr:main_problem} as weak solutions to a mean field games type system (\ref{eq:mfg}) and, combined with Theorem \ref{thm:existence}, proves existence and almost uniqueness of these solutions.

Problems of this type have been studied before in the very different context of Mean Field Games \cite{lasry06,lasry06a,lasry07}. A recent article by Cardaliaguet \cite{cardaliaguet13}, see also \cite{cardaliaguet12a}, gives the closest example to our present study and inspires many of our results, but it nevertheless has some crucial differences in terms of the Hamiltonian structure. The most obvious difference is the order of the bounds on the Hamiltonian, which in \cite{cardaliaguet13} are of the form $C|Du|^p + C_1$ for $p > 1$; here we take $p = 1$. In fact, this difference removes some of the strict convexity in the problem and destroys certain regularity properties of solutions, such as continuity of the solution and integrability of the derivative. Another difference is in terms of the variable coefficients. The Hamiltonian $H(x,Du) = c(x)|Du|^p$ is explicitly excluded in \cite{cardaliaguet13} because of the convolution method used in order to obtain smooth approximations of weak solutions to the Hamilton-Jacobi equation. In this paper we bypass this problem by avoiding this sort of regularization argument, and instead we use in a critical way the role of duality of the continuity equation. See Section \ref{sec:relaxed} below for a detailed explanation.

The paper is organized as follows. In Section \ref{sec:existence_regularity}, we establish the mathematical setting of our problem, list the main assumptions, and then proceed to analyze the question of existence of minimizers. The main question which we address is how to construct an appropriate (nonsmooth) function space in which minimizers of the original problem may be said to exist. Section \ref{sec:characterization} is devoted to the main result, Theorem \ref{thm:minimizers_weak}, which is a characterization of the minimizers as weak solutions to a system of partial differential equations (in fact, a mean field game type system). Finally, in Section \ref{sec:conclusion} we give some applications for further consideration.

\section{Existence of minimizers and regularity}

\label{sec:existence_regularity}

We consider the backward Hamilton-Jacobi equation
\begin{equation} \label{eq:back_hj}
\left\{
\begin{array}{l}
-u_t + H(x,Du) = f ~~ \text{in} ~~ [0,T] \times \bb{T}^N,\\
u(T,x) = u_T(x)
\end{array} \right.
\end{equation}
where $\bb{T}^N$ is the $N$-dimensional torus $\bb{R}^N / \bb{Z}^N$. This setting is chosen mainly due to the ease of working with periodic boundary conditions. Time has been reversed solely to fit the customary notation for mean field games (see System (\ref{eq:mfg}) below). The Hamiltonian is defined by (\ref{eq:hamiltonian}). The unique viscosity solution of (\ref{eq:back_hj}) is given by the value function for the finite horizon optimal control problem
\begin{equation} \label{eq:value}
u(t,x) = \inf\{u_T(y_{x,t}^\alpha(T)) + \int_t^T f(s,y_{x,t}^\alpha(s))ds : \alpha \in L^\infty(t,T;A)\}
\end{equation}
where $y_{x,t}^\alpha$ is the solution of the open loop control problem
\begin{equation} \label{eq:open_loop1}
\left\{ \begin{array}{cc} \dot{y}(\tau) = c(y(\tau),\alpha(\tau)), & \tau > t \\ y(t) = x, & x \in \bb{T}^N \end{array} \right.
\end{equation}
We consider the following optimization problem:
\begin{problem} \label{pr:min_f}
Let the functional $\s{J}$ be defined on functions $f \in C([0,T] \times \bb{T}^N)$ by
\begin{equation}
\s{J}(f) = \int_0^T \int_{\bb{T}^N} K(t,x,f(t,x)) dx dt - \int_{\bb{T}^N} u(0,x) dm_0(x)
\end{equation}
where $u$ satisfies (\ref{eq:value}). Given the assumptions of Section \ref{sec:assumptions}, find $\inf_f \s{J}(f)$.
\end{problem}
Here the canonical example of the cost functional is $K(t,x,f(t,x)) = \frac{1}{p}|f(t,x)|^p$ (see Section \ref{sec:assumptions} below).

In general, we cannot expect the infimum in Problem \ref{pr:min_f} to be achieved, i.e. we cannot expect to find a continuous function $f$ such that $\s{J}(f)$ is the minimum. The goal of this section is to identify a function space in which we can expect to find minimizers of this problem, and indeed to find a relaxed setting in which the problem has a minimum. It may be noticed initially that a minimizing function $f$ should appear in the space $L^p$. However, this low-regularity does not allow for us to make sense of the value function (\ref{eq:value}). Additionally, we will have to make sense of (\ref{eq:back_hj}) in a sense even weaker than the sense of viscosity, since even continuity of solutions can be lost in general upon passage to the limit (see Section \ref{sec:loss_of_continuity}). Thus the issue of defining minimizers is rather delicate.

Following Cardaliaguet \cite{cardaliaguet13}, we might consider the following approach. Denote by $\s{K}_0$ the set of maps $u \in C^1([0,T] \times \bb{T}^N)$ such that $u(T,x) = u_T(x)$. Then define on $\s{K}_0$ the functional
\begin{equation}
\s{A}(u) = \int_0^T \int_{\bb{T}^N} K(t,x,-u_t + H(x,Du)) dx dt - \int_{\bb{T}^N} u(0,x) dm_0(x) .
\end{equation}
Then we have
\begin{problem}[Smooth problem] \label{pr:smooth}
Find $\inf_{u \in \s{K}_0} \s{A}(u)$.
\end{problem}
This problem can be relaxed as follows. Let $\s{K}$ be the set of pairs $(u,f) \in BV((0,T) \times \bb{T}^N) \times L^p((0,T) \times \bb{T}^N)$
such that $u(T,x) = u_T(x)$ in the sense of traces, and
\begin{equation}
-u_t + H(x,Du) \leq f ~~~ \text{in} ~(0,T) \times \bb{T}^N
\end{equation}
in the sense of distributions. Note that $\s{K}$ is a convex set which has been chosen based on the regularity we are able to obtain by passing to the limit on sequences of minimizers (see Theorem \ref{thm:existence}). We have that $\s{K}_0$ embeds naturally $\s{K}$, since for $u \in \s{K}_0$ the pair $(u,-u_t + H(x,Du)) \in \s{K}$. Define on $\s{K}$ the functional
\begin{equation}
\s{A}(u,f) = \int_0^T \int_{\bb{T}^N} K(t,x,f(t,x)) dx dt - \int_{\bb{T}^N} u(0,x) dm_0(x).
\end{equation}
\begin{problem}[Relaxed problem] \label{pr:relaxed}
Find $\inf_{(u,f) \in \s{K}} \s{A}(u,f).$
\end{problem}
The next step would be to show directly that the smooth and relaxed problems have the same infimum. However, this requires assumption on the Hamiltonian structure which explicitly excludes cases such as $c(x)|Du|$ (see \cite{cardaliaguet13}). The reason for this is that the method of proof is built on using classical convolution methods to recover smooth solutions to the Hamilton-Jacobi equations, and this can only work if there are no variable coefficients on $|Du|$ which get in the way.

To illustrate the difficulty, let us give an outline of a potential proof that the smooth problem \ref{pr:smooth} has the same infimum as the relaxed problem given above. Since the purpose is simply illustration, we take for now $H(x,Du) = c(x)|Du|$. It is enough to show that $\inf_{(u,f) \in \s{K}} \s{A}(u,f) \geq \inf_{u \in \s{K}_0} \s{A}(u)$. With this in mind, let $(u,f) \in \s{K}$.
Let $\xi \geq 0$ be a smooth convolution kernel in $\bb{R}^{N+1}$ with support in the unit ball and $\int \xi = 1$. Set $\xi_\epsilon(t,x) = \epsilon^{-N-1}\xi((t,x)/\epsilon)$, $u_\epsilon = \xi_\epsilon \ast u$ and $f_\epsilon = \xi_\epsilon \ast f$. Then
\begin{equation}
-\partial_t u_\epsilon + \xi_\epsilon \ast H(\cdot,Du) = f_\epsilon.
\end{equation}
We want to deduce from this equation an estimate of the form $-\partial_t u_\epsilon + H(x,Du_\epsilon) \leq \tilde{f}_\epsilon$, where $\tilde{f}_\epsilon$ is an appropriate modification of $f_\epsilon$ whose $L^p$ norm also converges to the $L^p$ norm of $f$. By convexity of $H$ in the second variable we have
\begin{equation}
H(x,Du_\epsilon(t,x)) \leq \xi_\epsilon \ast H(\cdot,Du) + \beta_\epsilon(t,x)
\end{equation}
where
\begin{equation}
\beta_\epsilon(t,x) = \int_{B_\epsilon(t,x)} \xi_\epsilon(t-s,x-y)|H(y,Du(s,y)) - H(x,Du(s,y))| ds dy.
\end{equation}
In order to estimate this term, one could use the fact that $c$ is Lipschitz to get
\begin{equation}
\beta_\epsilon(t,x) \leq  L \epsilon \int_{B_\epsilon(t,x)} \xi_\epsilon(t-s,x-y)|Du(s,y)|ds dy.
\end{equation}
But this is too crude a bound, because from here there is little we can do to estimate the remaining term. Indeed, with only $L^1$ regularity on $Du$, and at best a bound on $\xi_\epsilon$ of the form $C\epsilon^{-d-1}$, this means our estimate is on the order of $\epsilon^{-d}$, which diverges. Thus the smooth version of $u$ is no longer guaranteed to be a subsolution to the Hamilton-Jacobi equation.

The failure of the above argument in this case is in some sense a geometric problem. The trajectories over which the optimal control problem \ref{eq:value} is defined are constrained with respect to a metric with variable coefficients in space. This suggested to us a Riemannian geometry approach. The articles of Greene and Wu \cite{greene73,greene74} from the 1970's feature a method of convolution on Riemannian manifolds which preserves geometric properties of functions, such as Lipschitz continuity and convexity. This inspired some hope that one could use a modified convolution argument to prove the desired inequality above. However, we were unable to make this work.

Instead, our argument will avoid entirely the use of convolution smoothing of weak (sub-)solutions to the Hamilton-Jacobi equations. 
We exploit the fact (shown below in Theorem \ref{thm:dual}) that the optimal control of the Hamilton-Jacobi equations is in duality with that of the continuity equation.
To obtain a relaxed problem which has the same infimum as the smooth problem \ref{pr:smooth}, it suffices to show that both smooth and weak Hamilton-Jacobi subsolutions have the same basic relationship to the {\em dual} variable, which is a measure characterized as a generalized ODE flow (by the superposition principle, see Section \ref{sec:original_smooth}).
This is natural if one considers that $u$ is supposed to be the value function of an ODE optimal control problem.
Additionally, we gain the advantage the continuity equation has more tractable structure than the Hamilton-Jacobi equation, and it is relatively simple to obtain approximations of solutions by smooth functions satisfying the same equation.

The structure of this section is thus as follows. After stating our assumptions, we will analyze the smooth problem and its dual. The importance of the dual problem is twofold: in the first place it gives the adjoint equation for the characterization of minimizers (\ref{eq:mfg}), and in the second it provides the target minimum value which we expect to attain. We then show that the original problem \ref{pr:min_f} has the same infimum as the smooth problem. This is followed by a discussion of the loss of continuity, which is in contrast to \cite{cardaliaguet13} and is a direct consequence of the linear (as opposed to superlinear) growth of the Hamiltonian. Finally, we give the correct definition of the relaxed problem, prove the existence of a minimum, and show that this minimum is precisely the infimum of the original problem.

\subsection{Assumptions} 
\label{sec:assumptions}

\begin{enumerate}
\item $dm_0(x)$ is a positive finite measure on $\bb{T}^N$ which is absolutely continuous with respect to Lebesgue measure, having density (which we also call $m_0$) in $L^\infty(\bb{T}^N)$. (Most examples which inspire our study satisfy this assumption, for example, we could let $m_0(x)$ be the indicator of some set $\Omega$, in which case minimizing $- \int u(0,x)m_0(x)dx$ maximizes the probability that the fire is contained inside the set $\Omega$.)
\item We assume that set of controls $A$ is a compact topological space, and $c : \bb{T}^N \times A \to \bb{R}^N$ is a continuous vector field which is Lipschitz in the first variable. We assume that $c(x,A)$ is convex for all $x \in \bb{T}^N$. Moreover there exists a constant $c_0 > 0$ such that $c(x,A) \supset B_{c_0}(0)$ for all $x \in \bb{T}^N$.

The above requirements on $c$ imply the following conditions on $H$. First, we have that $H$ is Lipschitz in the first variable:
\begin{equation} \label{eq:hamiltonian_lipschitz}
|H(x,p) - H(y,p)| \leq L|x-y||p|.
\end{equation}
Second, we have the following linear bounds:
\begin{equation}
\label{eq:hamiltonian_bounds}
c_0|p| \leq H(x,p) \leq c_1|p|,
\end{equation}
where $c_1 := \sup_{x,a} |c(x,a)|$. Note also that $H(x,\cdot)$ is subadditive and positively homogeneous, hence convex. We denote by $H^*(x,\cdot)$ the Fenchel conjugate of $H(x,\cdot)$, which, due to the above assumptions, is simply the indicator function on the convex set $-c(x,A)$, i.e.
\begin{equation} \label{eq:hamiltonian_conjugate}
H^*(x,q) = \left\{\begin{array}{ccc}
0 & \text{if} & q \in -c(x,A),\\
\infty & \text{if} & q \notin -c(x,A).
\end{array} \right.
\end{equation}
\item $u_T$ is a Lipschitz continuous function on $\bb{T}^N$.
\item The cost function $K = K(t,x,f)$ is continuous in all variables, continuously differentiable and strictly convex in $f$, and satisfies the growth conditions
\begin{equation} \label{eq:cost_growth}
\frac{1}{C}|f|^p - C \leq K(t,x,f) \leq C|f|^p + C
\end{equation}
for a fixed $p > N+1$. We will assume moreover that $K(t,x,0) = 0$ and that $K(t,x,f)$ is increasing in $f$ for $f \geq 0$. We denote by $K^*(t,x,\cdot)$ the Fenchel conjugate of $K(t,x,\cdot)$, and we will denote by $k$ the partial derivative of $K^*$ with respect to the dual variable, i.e. $k(t,x,m) = \partial_m K^*(t,x,m)$. Note that $k$ is continuous in all variables and increasing in $m$. 
\end{enumerate}
We will denote throughout the conjugate exponent of $p$ by $q = p^*$. Observe that (\ref{eq:cost_growth}) is equivalent to the following conditions on $K^*$ and $k$:
\begin{equation} \label{eq:cost_growth2}
\frac{1}{C}|m|^q - C \leq K^*(t,x,m) \leq C|m|^q + C
\end{equation}
and
\begin{equation} \label{eq:cost_growth3}
\frac{1}{C}|m|^{q-1} - C \leq k(t,x,m) \leq C|m|^{q-1} + C
\end{equation}
for appropriate changes in the (universal) constant $C$.

\subsection{Smooth problem and its dual}
\label{sec:smooth_dual}

One immediate advantage of Problem \ref{pr:smooth} is that one can analyze it by quickly computing the dual problem. In this subsection we identify the dual problem \ref{pr:dual} as a minimization over solutions to a continuity equation, and we show that the problem has a minimum. In the rest of the section we use this problem essentially as a target. That is, when we define the relaxed problem, we want to show that it is also in duality with Problem \ref{pr:dual}.

Define $\s{K}_1$ to be the set of all pairs $(m,{\bf w}) \in L^1((0,T) \times \bb{T}^N) \times L^1((0,T) \times \bb{T}^N;\bb{R}^N)$ such that $m \geq 0$ almost everywhere, ${\bf w}(t,x) \in m(t,x)c(x,A)$ almost everywhere, and
\begin{align*}
\partial_t m + \mathrm{div}~({\bf w}) &= 0\\
m(0,\cdot) &= m_0(\cdot)
\end{align*}
in the sense of distributions. Because of the integrability assumption on $\bf w$, it follows that $t \mapsto m(t)$ has a unique representative such that $\int m(t)\phi$ is continuous on $[0,T]$ for all $\phi \in C^\infty(\bb{T}^N)$ (cf. \cite{ambrosio08}). It is to this representative that we refer when we write $m(t)$, and thus $m(t)$ is well-defined for all $t \in [0,T]$.

Note that $\s{K}_1$ is convex by the assumption that $c(x,A)$ is convex for all $x$. We define the {\em dual problem} as follows. Define a functional
\begin{equation}\label{eq:dual}
\s{B}(m,{\bf w}) = \int_{\bb{T}^N} u_T(x)m(T,x)dx + \int_0^T \int_{\bb{T}^N} K^*(t,x,m(t,x)) dx dt
\end{equation}
on $\s{K}_1$. 
Since $m \geq 0$, we have that the second integral in (\ref{eq:dual}) is well-defined in $(-\infty,\infty]$ by the assumptions on $K$. The first integral is well-defined by the continuity of $u_T$ and the fact that $m(T,x)dx$ is a finite measure.

We next state the ``dual problem" succinctly as
\begin{problem}[Dual Problem] \label{pr:dual}
Find $\inf_{(m,{\bf w}) \in \s{K}_1} \s{B}(m,{\bf w})$.
\end{problem}
Note that we could equally well define $\s{B}$ as
\begin{equation}\label{eq:dual2}
\s{B}(m,{\bf w}) = \int_{\bb{T}^N} u_T(x)m(T,x)dx + \int_0^T \int_{\bb{T}^N} K^*(t,x,m(t,x)) + \chi(t,x,{\bf w}(t,x)) dx dt
\end{equation}
where
\begin{equation}
\chi(t,x,{\bf w}) = \left\{\begin{array}{ccc} 0 & \text{if} & {\bf w} \in m(t,x)c(x,A), \\ \infty &\text{if} & {\bf w} \notin m(t,x)c(x,A). \end{array} \right.
\end{equation}
In this case we could lift the restriction ${\bf w}(t,x) \in m(t,x)c(x,A)$ in the definition of $\s{K}_1$. The problem of minimizing $\s{B}$ remains unchanged. We therefore treat these two formulations interchangeably, although it is the second which makes more explicit how the functional $\s{B}$ depends on $\bf w$.

The main result of this subsection is
\begin{theorem}
\label{thm:dual} Problems \ref{pr:smooth} and \ref{pr:dual} are in duality, i.e.
\begin{equation}
\inf_{u \in \s{K}_0} \s{A}(u) = -\min_{(m,{\bf w}) \in \s{K}_1} \s{B}(m,{\bf w})
\end{equation}
Moreover, the minimum on the right hand side is achieved by a pair $(m,{\bf w}) \in \s{K}_1$, of which $m$ is unique, and which must satisfy the following: $(t,x) \mapsto K^*(t,x,m(t,x)) \in L^1((0,T) \times \bb{T}^N)$, $m \in L^q((0,T) \times \bb{T}^N)$, and ${\bf w}(t,x) \in m(t,x)c(x,A)$ almost everywhere.
\end{theorem}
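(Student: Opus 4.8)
The plan is to read Theorem~\ref{thm:dual} off the Fenchel--Rockafellar duality theorem, which is what ``quickly computing the dual'' refers to. Put $E=C^1([0,T]\times\bb{T}^N)$ and $F=C^0([0,T]\times\bb{T}^N)\times C^0([0,T]\times\bb{T}^N;\bb{R}^N)$, and let $\Lambda\colon E\to F$, $\Lambda u=(-u_t,Du)$, a bounded linear map. Define $\s{F}\colon E\to(-\infty,+\infty]$ by $\s{F}(u)=-\int_{\bb{T}^N}u(0,x)\,dm_0(x)$ if $u(T,\cdot)=u_T$ and $+\infty$ otherwise; and $\s{G}\colon F\to(-\infty,+\infty]$ by $\s{G}(a,b)=\int_0^T\!\int_{\bb{T}^N}K(t,x,a+H(x,b))\,dx\,dt$ on the (convex, closed) set $\{a+H(x,b)\ge 0\ \text{a.e.}\}$ and $+\infty$ outside it. With these choices $\s{F}$ is convex, proper and l.s.c., and $\s{G}$ is convex (on its domain $K(t,x,\cdot)$ is evaluated on $[0,\infty)$, where it is convex and nondecreasing, and $a+H(x,b)$ is jointly convex), l.s.c., and finite and continuous near any point with $a+H(x,b)>0$ (using continuity and $p$-growth of $K$ and $|H(x,p)-H(x,p')|\le c_1|p-p'|$). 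One then checks, via a comparison argument for \eqref{eq:back_hj} (the source may be replaced by its positive part without raising $\s{A}$, since this only raises the value function $u$ and $K(t,x,0)=0\le K(t,x,f)$), that $\inf_{u\in E}\{\s{F}(u)+\s{G}(\Lambda u)\}=\inf_{\s{K}_0}\s{A}$. The qualification condition of Fenchel--Rockafellar then holds --- the one mild caveat being that a $C^1$ admissible competitor is needed, obtained by a routine smoothing of $u_T$ and a passage to the limit when $u_T$ is merely Lipschitz --- so
\[
\inf_{\s{K}_0}\s{A}=\max_{(\mu,{\bf w})\in F^*}\big\{-\s{F}^*(-\Lambda^*(\mu,{\bf w}))-\s{G}^*(\mu,{\bf w})\big\},
\]
the maximum being attained, where $F^*=\s{M}([0,T]\times\bb{T}^N)\times\s{M}([0,T]\times\bb{T}^N;\bb{R}^N)$.

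Next I would compute the two conjugates explicitly. For $\s{G}^*$, substituting $s=a+H(x,b)$ decouples the pointwise supremum, and standard results on convex integral functionals on $C^0$ force the competing measure to be absolutely continuous (this is where the polynomial, $p<\infty$, growth of $K$ is essential): for a.c.\ $(\mu,{\bf w})$,
\[
\s{G}^*(\mu,{\bf w})=\int_0^T\!\int_{\bb{T}^N}\Big(\sup_{s\ge 0}\{s\mu-K(t,x,s)\}+\sup_{b}\{b\cdot{\bf w}-\mu H(x,b)\}\Big)\,dx\,dt .
\]
The first term is $K^*(t,x,\mu)$ (the restriction $s\ge0$ being harmless because $K(t,x,0)=0$); by \eqref{eq:hamiltonian_conjugate} the second is $\mu H^*(x,{\bf w}/\mu)$, i.e.\ $0$ if ${\bf w}\in-\mu\,c(x,A)$ and $+\infty$ otherwise. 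Since $H(x,b)\ge c_0|b|>0$, the term $|\mu|H(x,b)$ is coercive, so $\mu<0$ on a positive-measure set makes the supremum $+\infty$; hence $\mu\ge 0$ a.e., and then $\mu\in L^q$ by \eqref{eq:cost_growth2}. For $\s{F}^*(-\Lambda^*(\mu,{\bf w}))$, integrating by parts shows it is finite exactly when $\mu(0,\cdot)=m_0$ and $\partial_t\mu+\mathrm{div}(-{\bf w})=0$ in the sense of distributions, in which case it equals $\int_{\bb{T}^N}u_T(x)\mu(T,x)\,dx$. Relabelling $(m,{\bf w}):=(\mu,-{\bf w})$, the constraint becomes ${\bf w}\in m\,c(x,A)$ and the equation $\partial_t m+\mathrm{div}\,{\bf w}=0$, so the effective domain of the dual functional is precisely $\s{K}_1$ (with, automatically, $m\in L^q$), on which $-\s{F}^*(-\Lambda^*(\cdot))-\s{G}^*(\cdot)=-\s{B}(m,{\bf w})$. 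This gives $\inf_{\s{K}_0}\s{A}=-\min_{\s{K}_1}\s{B}$ with the minimum attained, and the regularity of the minimizer ($m\in L^q$, $(t,x)\mapsto K^*(t,x,m(t,x))\in L^1$, ${\bf w}\in m\,c(x,A)$ a.e.) is immediate from the domain: finiteness of $\s{B}(m,{\bf w})$ together with boundedness of $u_T$ and mass conservation ($\int m(T)=\int m_0$) give $\int\!\int K^*(t,x,m)<\infty$, whence $m\in L^q$ via \eqref{eq:cost_growth2}.

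Uniqueness of $m$ follows from strict convexity: $K$ is $C^1$ and strictly convex, so $K^*(t,x,\cdot)$ is strictly convex, whence $(m,{\bf w})\mapsto\s{B}(m,{\bf w})$ is strictly convex in $m$ on the convex set $\s{K}_1$ (convexity of $\s{K}_1$ uses that each $c(x,A)$ is convex); two minimizers with distinct $m$-components would be strictly beaten by their midpoint, while the ${\bf w}$-component need not be unique. I expect the main obstacle to be the computation of $\s{G}^*$ --- in particular, invoking the representation of convex integral functionals on continuous functions to rule out singular parts of the dual measures (exactly where $p<\infty$ matters), plus the measurable-selection argument needed to pass the supremum through the integral. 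The low regularity of $u_T$ (needing a $C^1$ competitor) and the comparison-principle reduction that restores convexity of $\s{G}$ (since $K$ is nondecreasing only for nonnegative arguments, its implementation within $C^1$ deserves care) are lesser technical points. As a sanity check, the easy inequality $\inf_{\s{K}_0}\s{A}\ge-\inf_{\s{K}_1}\s{B}$ has a short direct proof --- test $\partial_t m+\mathrm{div}\,{\bf w}=0$ against $u\in\s{K}_0$, integrate by parts, use ${\bf w}\in m\,c(x,A)$ and $H(x,p)=\sup_a(-c(x,a)\cdot p)$ to get $u_t m+Du\cdot{\bf w}\ge -fm$ with $f=-u_t+H(x,Du)$, then apply $K(t,x,f)-fm\ge-K^*(t,x,m)$ --- and existence of a dual minimizer can alternatively be obtained by the direct method: a minimizing sequence is bounded in $L^q\times L^q$ (coercivity of $K^*$ and $|{\bf w}_n|\le c_1 m_n$), its weak limit stays in the convex, strongly closed --- hence weakly closed --- set $\s{K}_1$, and $\s{B}$ is weakly l.s.c., using convexity of $K^*$ and the equi-Lipschitz continuity of $t\mapsto\int m_n(t)\phi$ from the continuity equation, which yields $m_n(T)\rightharpoonup m(T)$.
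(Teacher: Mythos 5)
Your proposal follows essentially the same route as the paper: apply Fenchel--Rockafellar with $X=C^1([0,T]\times\bb{T}^N)$, $Y=C^0\times C^0$, with $\sr{F}$ encoding the terminal condition and the $m_0$-term, $\sr{G}$ the cost of the source $-u_t+H(x,Du)$, then compute the two conjugates to identify the dual problem with Problem \ref{pr:dual} (the continuity equation and initial condition from $\sr{F}^*\circ\Lambda^*$; the constraints $m\ge 0$, $m\in L^q$, ${\bf w}\in m\,c(x,A)$ and the value $\iint K^*$ from $\sr{G}^*$), obtain attainment of the dual maximum from the theorem, and deduce uniqueness of $m$ from strict convexity of $K^*$. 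Your extra care is not wasted: the paper applies Fenchel--Rockafellar to the unrestricted functional $\sr{G}(a,{\bf b})=\iint K(t,x,-a+H(x,{\bf b}))$, whose convexity is actually delicate since $K(t,x,\cdot)$ is only assumed increasing on $[0,\infty)$ (a composition $K\circ(\text{convex})$ need not be convex), and it also does not spell out why singular dual measures are excluded nor why a $C^1$ competitor with trace $u_T$ exists; your restriction of $\s{G}$ to $\{a+H(x,b)\ge 0\}$, your appeal to the theory of convex integral functionals on $C^0$ (where $p<\infty$ enters), and your remark about smoothing $u_T$ address exactly these points.

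The one step whose stated justification does not work is the claim that restricting to nonnegative sources leaves the smooth infimum unchanged ``via a comparison argument'': replacing $f=-u_t+H(x,Du)$ by $f^+$ and invoking comparison produces the associated viscosity solution, which is Lipschitz but in general not $C^1$, hence not an admissible competitor in the smooth class; moreover the inequality $K(t,x,0)\le K(t,x,f)$ for $f<0$ is not literally implied by the stated assumptions (e.g. $K=|f|^p+f$ satisfies them and dips below zero), although the paper itself implicitly uses the reading $K\ge 0$ elsewhere, and the same caveat affects your claim that $\sup_{s\ge 0}\{s\mu-K(t,x,s)\}=K^*(t,x,\mu)$. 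Fortunately you do not need this reduction at all: combine (i) Fenchel--Rockafellar applied to your restricted problem, which gives $\text{restricted-inf}=-\min_{(m,{\bf w})\in\s{K}_1}\s{B}(m,{\bf w})$ with attainment, (ii) the trivial inequality $\inf_{\s{K}_0}\s{A}\le\text{restricted-inf}$, and (iii) the direct weak-duality computation you already give as a ``sanity check,'' namely $\s{A}(u)\ge-\s{B}(m,{\bf w})$ for every $u\in\s{K}_0$ and every $(m,{\bf w})\in\s{K}_1$ with $m\in L^q$, which uses only pointwise Fenchel--Young for $K$ and no convexity of the composition. Chaining (i)--(iii) yields $\inf_{\s{K}_0}\s{A}=-\min\s{B}$, so your argument is correct once assembled this way, and it is in fact slightly more robust than the paper's own presentation.
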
 
\begin{remark}
In general, we cannot expect $\bf w$ to be unique because the functional $\s{B}$ is not strictly convex in $\bf w$.
\end{remark}

The proof of Theorem \ref{thm:dual} is essentially an application of the Fenchel-Rockafellar theorem, which is stated as follows.
\begin{theorem}[Fenchel-Rockafellar]
Let $X$ and $Y$ be Banach spaces and denote by $X'$ and $Y'$ their duals. Suppose $\Lambda : X \to Y$ is a continuous linear operator with adjoint $\Lambda^* : Y' \to X'$. Finally, let $\sr{F}$ and $\sr{G}$ be convex functionals on $X$ and $Y$, respectively, with respective Fenchel conjugates denoted by $\sr{F}^*$ and $\sr{G}^*$. Then
\begin{equation}
\inf_{x \in X}\sr{F}(x) + \sr{G}(\Lambda x) = \sup_{y' \in Y'} -\sr{F}^*(\Lambda^*(y')) - \sr{G}^*(-y').
\end{equation}
\end{theorem}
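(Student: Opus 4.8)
The plan is to apply the Fenchel--Rockafellar theorem with a careful choice of Banach spaces and functionals so that the primal problem recovers Problem \ref{pr:smooth} and the dual recovers Problem \ref{pr:dual}. First I would set $X = C^1([0,T] \times \bb{T}^N)$ and $Y = C([0,T] \times \bb{T}^N) \times C(\bb{T}^N)$, and let $\Lambda : X \to Y$ be the operator $\Lambda u = (-u_t + H(\cdot, Du),\, u(T,\cdot))$. Wait --- since $H(x,\cdot)$ is only Lipschitz, not linear, $\Lambda$ is not linear, so this naive choice fails. The fix is standard: instead take $X = C^1([0,T] \times \bb{T}^N)$ with $\Lambda u = (u_t, Du, u(T,\cdot))$ landing in $Y = C([0,T]\times\bb{T}^N) \times C([0,T]\times\bb{T}^N;\bb{R}^N) \times C(\bb{T}^N)$, which \emph{is} linear and continuous. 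Then I would set $\sr{F} \equiv 0$ on $X$ and define $\sr{G}$ on $Y$ by
\begin{equation}
\sr{G}(a,{\bf b},c) = \int_0^T \int_{\bb{T}^N} K(t,x, -a(t,x) + H(x,{\bf b}(t,x))) \, dx \, dt + \begin{cases} 0 & \text{if } c = u_T, \\ \infty & \text{otherwise,}\end{cases}
\end{equation}
which is convex (composition of convex $K(t,x,\cdot)$ with the convex map $(a,{\bf b}) \mapsto -a + H(x,{\bf b})$, then integrated) and finite, hence continuous at suitable points. With these choices $\inf_{x} \sr{F}(x) + \sr{G}(\Lambda x) = \inf_{u \in \s{K}_0} \s{A}(u)$, giving the left-hand side.

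Second, I would compute the right-hand side $\sup_{y'} -\sr{F}^*(\Lambda^* y') - \sr{G}^*(-y')$. Since $\sr{F} \equiv 0$, $\sr{F}^*$ is the indicator of $\{0\}$ in $X'$, so the supremum is over those $y' = (\mu, {\bf w}, \lambda) \in Y' = \s{M}([0,T]\times\bb{T}^N) \times \s{M}([0,T]\times\bb{T}^N;\bb{R}^N) \times \s{M}(\bb{T}^N)$ with $\Lambda^* y' = 0$. Writing out $\langle \Lambda^* y', u \rangle = \langle \mu, u_t \rangle + \langle {\bf w}, Du \rangle + \langle \lambda, u(T,\cdot)\rangle = 0$ for all $u \in C^1$ and integrating by parts, the condition $\Lambda^* y' = 0$ is exactly the distributional statement that $\mu$ is (up to sign) a nonnegative density $m$ solving $\partial_t m + \mathrm{div}({\bf w}) = 0$ with $m(0,\cdot) = m_0$ --- here $m_0$ appears because the boundary term at $t=0$ must be absorbed; more precisely one reorganizes so that $\sr{G}^*(-y')$ carries the $\int u_T m(T)$ and $\int u(0) m_0$ contributions. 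Then I would compute $\sr{G}^*$: by the theory of integral functionals (interchange of conjugation and integration, e.g.\ Rockafellar), $\sr{G}^*(-\mu,-{\bf w},-\lambda)$ decomposes into $\int_0^T\int_{\bb{T}^N} \big(K^*(t,x,m) + \chi(t,x,{\bf w})\big)\,dx\,dt + \int_{\bb{T}^N} u_T \, d\lambda$ with $\lambda = m(T,\cdot)$ forced, matching formula (\ref{eq:dual2}) for $\s{B}$. Collecting terms yields $\inf_{u \in \s{K}_0}\s{A}(u) = -\inf_{(m,{\bf w}) \in \s{K}_1} \s{B}(m,{\bf w})$.

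Third, I would upgrade the $\inf$ on the right to a $\min$ and extract the asserted regularity. Existence of a minimizer is not automatic from Fenchel--Rockafellar (which only gives equality of values), so I would argue directly by the direct method: take a minimizing sequence $(m_n, {\bf w}_n) \in \s{K}_1$; from the lower bound in (\ref{eq:cost_growth2}), $\|m_n\|_{L^q}$ is bounded, and from ${\bf w}_n \in m_n c(x,A)$ together with $|c| \leq c_1$, $\|{\bf w}_n\|_{L^q}$ is bounded; extract weakly convergent subsequences $m_n \rightharpoonup m$, ${\bf w}_n \rightharpoonup {\bf w}$ in $L^q$. The constraint set $\s{K}_1$ is convex and closed under $L^q$-weak convergence (the continuity equation passes to the limit distributionally; the pointwise constraint ${\bf w} \in m\,c(x,A)$ is convex in $(m,{\bf w})$ and closed, using that $c(x,A)$ is convex and $x \mapsto c(x,A)$ is continuous, so the constraint is weakly closed by Mazur). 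Lower semicontinuity of $\s{B}$: the map $(m,{\bf w}) \mapsto \int K^*(t,x,m)\,dx\,dt$ is convex and strongly l.s.c., hence weakly l.s.c.; the term $\int u_T m(T)$ requires care since $m(T)$ is a trace --- here I would use that for $(m_n,{\bf w}_n) \in \s{K}_1$ with $L^q$ bounds, $t \mapsto m_n(t)$ is equicontinuous into $(C^\infty(\bb{T}^N))'$, so $m_n(T) \rightharpoonup m(T)$ and the linear term passes to the limit. This gives a minimizer $(m,{\bf w})$; finiteness of $\s{B}$ at this minimizer (which is $\leq \s{B}$ of any admissible competitor, e.g.\ the stationary-like $m \equiv$ some bounded function) forces $K^*(\cdot,\cdot,m) \in L^1$, and the $L^q$ bound on $m$ is already in hand. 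Uniqueness of $m$ follows from strict convexity of $K(t,x,\cdot)$, equivalently strict convexity of $K^*(t,x,\cdot)$ in $m$: if $m_1 \neq m_2$ are both optimal, then $\frac{1}{2}(m_1+m_2)$ (with $\frac12({\bf w}_1 + {\bf w}_2)$, which is admissible by convexity of $\s{K}_1$) strictly decreases $\s{B}$, a contradiction.

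\textbf{Main obstacle.} The delicate point is \emph{not} the abstract duality --- that is a routine application once the spaces are set up --- but rather (i) verifying the hypotheses of Fenchel--Rockafellar, specifically that $\sr{G}$ (or $\sr{F}$) is continuous at some point of the relevant domain so that the duality gap vanishes; this needs a point $\bar u \in \s{K}_0$ at which $\sr{G}\circ\Lambda$ is finite and continuous, which one gets by taking $\bar u$ smooth with $\bar u(T,\cdot) = u_T$ and noting $K$ is finite everywhere and continuous, and (ii) the identification of $\sr{G}^*$ as an integral functional with the correct trace term $\int u_T \, m(T)$ and the correct role of $m_0$ in the adjoint computation --- bookkeeping the boundary-in-time terms correctly under integration by parts is where sign errors and misattributed terms are most likely to creep in. The passage from equality-of-values to attainment (the ``$\min$'' claim) and the l.s.c.\ of the trace term $\int u_T m(T)$ under weak $L^q$ convergence is the other place requiring genuine (though standard) work, relying on the continuity-equation regularity of $t \mapsto m(t)$ noted in the excerpt.
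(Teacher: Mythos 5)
Your proposal does not prove the statement it was supposed to prove. The statement here is the abstract Fenchel--Rockafellar duality theorem itself (for general Banach spaces $X,Y$, convex $\sr{F},\sr{G}$, and a continuous linear $\Lambda$), which the paper quotes as a classical result and then \emph{uses} to prove Theorem \ref{thm:dual}. Your argument begins ``apply the Fenchel--Rockafellar theorem with a careful choice of Banach spaces and functionals,'' i.e.\ it assumes the very statement under discussion and instead reconstructs the concrete duality $\inf_{u \in \s{K}_0}\s{A}(u) = -\min_{(m,{\bf w}) \in \s{K}_1}\s{B}(m,{\bf w})$ together with attainment, regularity and uniqueness of $m$ --- that is, you have (re)proved a version of Theorem \ref{thm:dual}, not the abstract theorem. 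A proof of the actual statement would stay at the level of $X$, $Y$, $\Lambda$: weak duality ($\inf \geq \sup$) follows from the Fenchel--Young inequality $\sr{F}(x) + \sr{F}^*(\Lambda^* y') \geq \langle \Lambda^* y', x\rangle$ and $\sr{G}(\Lambda x) + \sr{G}^*(-y') \geq \langle -y', \Lambda x\rangle$; strong duality is the nontrivial part and is usually obtained by introducing the perturbation (value) function $h(y) = \inf_{x} \bigl(\sr{F}(x) + \sr{G}(\Lambda x + y)\bigr)$, showing $h$ is convex, and using Hahn--Banach separation (continuity or subdifferentiability of $h$ at $0$) to identify $\sup_{y'}\bigl(-\sr{F}^*(\Lambda^* y') - \sr{G}^*(-y')\bigr)$ with $h^{**}(0) = h(0)$. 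None of this machinery appears in your write-up.

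Two secondary remarks. First, as stated (and as you implicitly note in your ``main obstacle'' paragraph), the theorem needs a qualification hypothesis --- e.g.\ a point $x_0$ with $\sr{F}(x_0) < \infty$ and $\sr{G}$ finite and continuous at $\Lambda x_0$ --- both for the equality and for attainment of the supremum; any genuine proof must invoke it, and it is also what the paper silently relies on when it upgrades the dual $\sup$ to a $\max$ in the proof of Theorem \ref{thm:dual}. Second, within your (misdirected) application, your setup differs from the paper's: you put the terminal condition $u(T,\cdot) = u_T$ into a third component of $\sr{G}$ with $\sr{F} \equiv 0$ and omit the term $-\int u(0,x)\,dm_0(x)$ from the primal functional, whereas the paper encodes both the terminal constraint and the $m_0$-term in $\sr{F}$ and takes $\Lambda u = (\partial_t u, Du)$ only; with your choices the primal value is not $\inf_{u \in \s{K}_0}\s{A}(u)$ and the constraint $\Lambda^* y' = 0$ does not by itself produce the initial condition $m(0,\cdot) = m_0$, so even as a proof of Theorem \ref{thm:dual} the bookkeeping would need to be repaired.
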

In order to prove Theorem \ref{thm:dual}, we need to define function spaces used in the application of the Fenchel-Rockefeller duality Theorem. 
Let
\begin{itemize}
\item $X := C^1([0,T] \times \bb{T}^N;\bb{R})$,
\item $Y := C([0,T] \times \bb{T}^N;\bb{R}) \times C([0,T] \times \bb{T}^N;\bb{R}^N)$.
\end{itemize}
Then define the functionals $\sr{F}:X \to \bb{R}$ and $\sr{G}:Y \to \bb{R}$ by
\begin{equation}
\begin{array}{l}
\sr{F}(u) = \left\{\begin{array}{ccc} -\int_{\bb{T}^N} u(0,x)dm_0(x) & \text{if} & u(T,\cdot) = u_T(\cdot)\\ \infty & \text{otherwise} & \end{array}\right.\\
\sr{G}(a,{\bf b}) = \int_0^T \int_{\bb{T}^N} K(t,x,-a+H(x,{\bf b})) dx dt
\end{array}
\end{equation}
and let $\Lambda : X \to Y$ be the linear map given by $\Lambda(u) = (\partial_t u, Du)$. Let $\sr{F}^*$ and $\sr{G}^*$ represent the conjugate of $\sr{F}$ and $\sr{G}$ respectively.
\begin{proposition} \label{prop:conjugates}
For all $(m,{\bf w}) \in Y'$,
\begin{align}
\sr{F}^*(\Lambda^*(m,{\bf w})) &= \left\{ \begin{array}{ccc} \int_{\bb{T}^N}u_T(x)m(T,x)dx & \text{if} & \begin{array}{c} \partial_t m + \mathrm{div}~ {\bf w} = 0\\ m(0,\cdot) = m_0(\cdot) \end{array} \\ \infty & \text{otherwise} & \end{array} \right.\\
\sr{G}^*(-m,-{\bf w}) &=\left\{\begin{array}{ccc} \int_0^T \int_{\bb{T}^N} K^*(t,x,m(t,x)) dx dt & \text{if} & \begin{array}{c} m \in L^q([0,T] \times \bb{T}^N), m \geq 0 ~\mathrm{a.e.} \\ {\bf w}(t,x) \in m(t,x)c(x,A) ~\mathrm{a.e.} \end{array}\\ \infty & \text{otherwise} & \end{array} \right.
\end{align}
where the equation $\partial_t m + \mathrm{div}~{\bf w} = 0$ holds in the sense of distributions.
\end{proposition}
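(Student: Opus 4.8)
The plan is to carry out the Fenchel conjugate computations directly, the only non‑elementary ingredient being the duality theory for integral functionals over spaces of continuous functions. First I would use the Riesz representation theorem to identify $Y'$ with the pairs $(m,{\bf w})$ of finite Radon measures on $[0,T]\times\bb{T}^N$ ($m$ scalar, ${\bf w}$ valued in $\bb{R}^N$) and to compute the adjoint, $\langle\Lambda^*(m,{\bf w}),u\rangle = \int_0^T\int_{\bb{T}^N}u_t\,dm + \int_0^T\int_{\bb{T}^N}Du\cdot d{\bf w}$ for $u\in X$. Thus $\sr{F}^*(\Lambda^*(m,{\bf w}))$ is the supremum of $\int u_t\,dm + \int Du\cdot d{\bf w} + \int_{\bb{T}^N}u(0,x)\,dm_0(x)$ over $u\in C^1$ with $u(T,\cdot)=u_T$, while $\sr{G}^*(-m,-{\bf w})$ is the supremum over $(a,{\bf b})\in Y$ of $-\int a\,dm - \int {\bf b}\cdot d{\bf w} - \int_0^T\int_{\bb{T}^N}K(t,x,-a+H(x,{\bf b}))\,dx\,dt$.

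For the first identity I would integrate by parts: for $u\in C^1$ with $u(T,\cdot)=u_T$, the sum $\int u_t\,dm + \int Du\cdot d{\bf w}$ equals $-\langle\partial_t m+\mathrm{div}\,{\bf w},u\rangle_{(0,T)\times\bb{T}^N} + \int_{\bb{T}^N}u_T\,dm(T) - \int_{\bb{T}^N}u(0,\cdot)\,dm(0)$, where $m(0),m(T)$ are the time traces, well defined once $\partial_t m=-\mathrm{div}\,{\bf w}$ holds by the continuity‑equation regularity recalled after the definition of $\s{K}_1$ (cf.\ \cite{ambrosio08}); if that equation fails, testing against $u$ supported in $(0,T)\times\bb{T}^N$ already makes the supremum $+\infty$. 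Granting the equation, the quantity reduces to $\int_{\bb{T}^N}u_T\,dm(T) + \int_{\bb{T}^N}u(0,\cdot)\,d(m_0-m(0))$; varying the trace $u(0,\cdot)$ over $C^1(\bb{T}^N)$ forces $m(0)=m_0$ under penalty of $+\infty$, and when it holds the supremum collapses to $\int_{\bb{T}^N}u_T(x)m(T,x)\,dx$, independently of $u$. The one delicate point is that $u_T$ is merely Lipschitz, so no genuine $C^1$ function equals $u_T$ at $t=T$; one replaces $u_T$ by its spatial mollifications and passes to the limit, which is harmless since $u_T$ is continuous and $m(T)$ is a finite measure.

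For $\sr{G}^*$ I would compute the conjugate of the integrand pointwise in $(t,x)$ and then integrate. At fixed $(t,x)$ the change of variable $\sigma=-\alpha+H(x,\beta)$ (a bijection in $\alpha$ for each $\beta$) decouples the inner supremum over $(\alpha,\beta)$ into $\sup_\sigma\{\sigma m-K(t,x,\sigma)\}=K^*(t,x,m)$ plus $\sup_\beta\{-H(x,\beta)m-\beta\cdot{\bf w}\}$. Using the positive homogeneity of $H(x,\cdot)$ and the bound $H(x,\beta)\ge c_0|\beta|$, the latter term is $m\,H^*(x,-{\bf w}/m)$ when $m>0$, is $0$ when $m=0={\bf w}$, and is $+\infty$ when either $m<0$ (there $-H(x,\beta)m$ is coercive in $\beta$) or $m=0\ne{\bf w}$; since by (\ref{eq:hamiltonian_conjugate}) $H^*(x,\cdot)$ is the indicator of $-c(x,A)$, the pointwise conjugate equals $K^*(t,x,m)$ if $m\ge0$ and ${\bf w}\in m\,c(x,A)$, and $+\infty$ otherwise. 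Finally I would invoke the standard fact that the conjugate on $C([0,T]\times\bb{T}^N)$ of the integral functional $\sr{G}$ is the integral of this pointwise conjugate over absolutely continuous $(m,{\bf w})$ and $+\infty$ on those with a nonzero singular part; the $L^q$ integrability of $m$ then follows from the lower bound $K^*(t,x,m)\ge\frac1C|m|^q-C$ recorded in (\ref{eq:cost_growth2}).

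The routine parts are the pointwise Legendre transform and the formal integration by parts; the main obstacle is the measure‑theoretic content of the last step for $\sr{G}^*$, in two pieces. First, to show that a nonzero singular part of $m$ or ${\bf w}$ forces $\sr{G}^*=+\infty$, one chooses a continuous $a$ (resp.\ ${\bf b}$) spiking with amplitude $\lambda$ on a small open set $E_\delta$ containing the support of that singular part, oriented along its polar decomposition; then $-\int a\,dm-\int {\bf b}\cdot d{\bf w}\ge\lambda|\mu^s|-o_\delta(1)$ (with $\mu^s$ the singular part in question) while $\int_{[0,T]\times\bb{T}^N}K(t,x,-a+H(x,{\bf b}))\,dx\,dt\le C_\lambda|E_\delta|+o_\delta(1)\to0$ as $|E_\delta|\to0$ (possible because the singular part is Lebesgue‑null, and using that $K$ and $H$ are locally bounded), and one lets $\lambda\to\infty$. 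Second, on the absolutely continuous part the interchange of the pointwise supremum with the integral requires a measurable selection of near‑optimal $(\alpha,\beta)$. A secondary subtlety, already flagged, is that the $\sr{F}^*$ formula is asserted for a priori merely measure‑valued $(m,{\bf w})$, so the time traces there are taken in the bounded‑variation‑in‑time sense for measure solutions of the continuity equation; this is consistent a posteriori, once finiteness of $\sr{G}^*$ has been used to upgrade $m$ to $L^q$.
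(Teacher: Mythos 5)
Your proposal is correct and its core coincides with the paper's proof: the same change of variable $a \mapsto -a + H(x,{\bf b})$ isolates $K^*(t,x,m)$; the ``otherwise'' cases are excluded by the same kind of test-field constructions (using $H(x,{\bf b}) \geq c_0|{\bf b}|$ when $m<0$ on a set of positive measure, and the fact (\ref{eq:hamiltonian_conjugate}) that $H^*(x,\cdot)$ is the indicator of $-c(x,A)$ when ${\bf w} \notin m\,c(x,A)$); and $\sr{F}^*$ is computed by the same integration by parts, with $\partial_t m + \mathrm{div}\,{\bf w}=0$ and $m(0,\cdot)=m_0$ enforced by letting the test function blow up. Where you genuinely go beyond the paper is in the measure-theoretic bookkeeping: the paper manipulates $(m,{\bf w})\in Y'$ as if they were integrable functions, so it never addresses why a nonzero singular part forces $\sr{G}^*=+\infty$, nor the interchange of supremum and integral (which hides a measurable-selection/density step, since the test pairs $(a,{\bf b})$ are continuous), nor the fact that $u_T$ is only Lipschitz, so the constraint $u(T,\cdot)=u_T$ is only approximately attainable within $X=C^1$. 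You supply the spiking argument killing singular parts (which works precisely because $q>1$ makes the recession of the pointwise conjugate infinite, by (\ref{eq:cost_growth2})), you rest the sup--integral interchange on the Rockafellar theory of conjugates of integral functionals on $C([0,T]\times\bb{T}^N)$ while flagging the selection issue, and you mollify $u_T$ in the $\sr{F}^*$ computation. What each buys: the paper's version is shorter and makes the convex-analytic mechanism transparent; yours is actually valid on all of $Y'$, which is what the application of Fenchel--Rockafellar in Theorem \ref{thm:dual} requires, since a priori the optimizer produced there is only a pair of Radon measures.
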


\begin{proof}
At first we have
\begin{align*}
\sr{G}^*(-m,-{\bf w}) &= \sup_{(a,{\bf b}) \in Y} \langle -(a,{\bf b}),(m,{\bf w}) \rangle - \sr{G}(a,{\bf b}) \\
&= \sup_{(a,{\bf b}) \in Y} \int_{(0,T) \times \bb{T}^N} -am - {\bf b} \cdot {\bf w} - K(t,x,-a+H(x,{\bf b})) dx dt\\
&= \sup_{(a,{\bf b}) \in Y} \int_{(0,T) \times \bb{T}^N} -{\bf b} \cdot {\bf w} - H(x,{\bf b})m + m(-a + H(x,{\bf b})) - K(t,x,-a+H(x,{\bf b})) dx dt\\
&= \sup_{(a,{\bf b}) \in Y} \int_{(0,T) \times \bb{T}^N} -{\bf b} \cdot {\bf w} - H(x,{\bf b})m + ma - K(t,x,a) dx dt\\
&= \sup_{{\bf b}} \int_{(0,T) \times \bb{T}^N} -{\bf b} \cdot {\bf w} - H(x,{\bf b})m + K^*(t,x,m(t,x)) dx dt
\end{align*}
Suppose first that $m \geq 0$ and ${\bf w}(t,x) \in m(t,x)c(x,A)$ almost everywhere. Let
\begin{equation}
{\bf v}(t,x) = \left\{ \begin{array}{ccc} \displaystyle \frac{{\bf w}(t,x)}{m(t,x)} & \text{if} & m(t,x) > 0, \\ 0 &\text{if}& m(t,x) = 0. \end{array} \right.
\end{equation}
We have that $m{\bf v} = {\bf w}$ and, since $c(x,A)$ always contains the zero vector, that ${\bf v}(t,x) \in c(x,A)$ almost everywhere. Thus $H^*(x,-{\bf v}) = 0$ and in particular $-{\bf b} \cdot {\bf w} - H(x,{\bf b})m \leq 0$ for any ${\bf b}$. It follows that
\begin{equation} \label{eq:g_star}
\sr{G}^*(-m,-{\bf w}) \leq \int_{(0,T) \times \bb{T}^N} K^*(t,x,m(t,x))dt dx.
\end{equation}
On the other hand, this upper bound can be achieved by taking ${\bf b} = 0$. This proves that (\ref{eq:g_star}) is an equality when ${\bf w}(t,x) \in m(t,x)c(x,A)$ almost everywhere.

We next turn to the case where $m \leq -\epsilon < 0$ on some set $V$ of nonzero measure (otherwise, we have $m \geq 0$ almost everywhere). Then we have
\begin{align*}
\sr{G}^*(-m,-{\bf w}) &\geq \sup_{{\bf b}} \int_{V} -{\bf b} \cdot {\bf w} - H(x,{\bf b})m + K^*(t,x,m(t,x)) dx dt\\
&\geq \sup_{{\bf b}} \int_{V} -{\bf b} \cdot {\bf w} + \epsilon c_0|{\bf b}| dx dt = \infty.
\end{align*}

Finally, we consider $m \geq 0$ almost everywhere but ${\bf w}(t,x) \notin m(t,x)c(x,A)$ on some set $U$ of nonzero measure. Note that ${\bf w} \neq 0$ on $U$ since $c(x,A)$ always contains the zero vector. Observe that either $U_1 := U \cap \{m = 0\}$ or $U_2 := U \cap \{m > 0\}$ has nonzero measure. In the former case, take a supremum over continuous vector fields $\bf b$ with support in $U_1$ to get
\begin{equation}
\sup_{{\bf b}} \int_{(0,T) \times \bb{T}^N} -{\bf b} \cdot {\bf w} - H(x,{\bf b})m ~dx dt \geq \sup_{{\bf b}} \int_{U_1} -{\bf b} \cdot {\bf w} dx dt = \infty.
\end{equation}
In the latter case, we use the fact that $H^*(x,-\frac{{\bf w}}{m}) = \infty$ to see that
\begin{equation}
\sup_{{\bf b}} \int_{(0,T) \times \bb{T}^N} -{\bf b} \cdot {\bf w} - H(x,{\bf b})m ~dx dt \geq \sup_{{\bf b}} \int_{U_2} m\left({\bf b} \cdot \left(-\frac{{\bf w}}{m}\right) - H(x,{\bf b})\right) dx dt = \infty.
\end{equation}
This completes the computation of $\sr{G}^*$.

For $\sr{F}^*$, we use integration by parts to deduce that
\begin{align*}
\sr{F}^*(\Lambda^*(m,{\bf w})) &= \sup_{u \in X} \langle \Lambda u, (m,{\bf w}) \rangle - \sr{F}(u) \\
&= \sup_{u \in X, u(0) = g} \left( \int_{(0,T) \times \bb{T}^N} m\partial_t u + Du \cdot {\bf w} dx dt + \int_{\bb{T}^N} u(0,x)dm_0(x) \right)\\
&= \sup_{u \in X, u(0) = g} \left( -\int_{(0,T) \times \bb{T}^N} u(\partial_t m + \mathrm{div}~ {\bf w}) dx dt \right. \\
&~~~~~~ \left. + \int_{\bb{T}^N} u_T(x)m(T,x) - u(0,x)m(0,x) dx + \int_{\bb{T}^N} u(0,x)d
m_0(x) \right).
\end{align*}
Now it follows by a usual series of arguments that unless
\begin{equation*}
\begin{array}{c}
\partial_t m + \mathrm{div}~ {\bf w} = 0\\
m(0,\cdot) = m_0
\end{array}
\end{equation*}
the supremum above is infinite. This can be seen by choosing $u$ appropriately: if $\partial_t m + \nabla \cdot {\bf w} \neq 0$ then $-\int u(\partial_t m + \nabla \cdot {\bf w})$ can be made aribtrarily large, and we can simply choose $u(0,\cdot) = 0$ in this case. On the other hand, if $\partial_t m + \nabla \cdot {\bf w} = 0$ but $m(T,\cdot) \neq m_0$, then $\int u(0,x)(m(0,x)dx-dm_0(x))$ can be made arbitrarily large. This completes the proof.

\end{proof}

With the above proposition, the proof of Theorem \ref{thm:dual} is straightforward.
\begin{proof}[Proof of Theorem \ref{thm:dual}]
Problem \ref{pr:smooth} can be written as
\begin{equation}
\inf_{u \in X} \sr{F}(u) + \sr{G}(\Lambda(u))
\end{equation}
which by the Fenchel-Rockefeller theorem is equal to
\begin{equation}
\max_{(m,{\bf w}) \in Y'} -\sr{F}^*(\Lambda^*(m,{\bf w})) - \sr{G}^*(-(m,{\bf w})) = -\min_{(m,{\bf w}) \in Y'} \sr{F}^*(\Lambda^*(m,{\bf w})) + \sr{G}^*(-(m,{\bf w})).
\end{equation}
In particular, we have an optimizing pair $(m,{\bf w}) \in Y'$. By Proposition \ref{prop:conjugates} we deduce that $m \in L^q((0,T) \times \bb{T}^N)$, ${\bf w}(t,x) \in m(t,x)c(x,A)$ almost everywhere, and
\begin{equation}
m_t + \mathrm{div}~{\bf w} = 0, ~~ m(0,\cdot) = m_0.
\end{equation}
In particular, $(m,{\bf w}) \in \s{K}_1$. Moreover, taking into account that $(m,{\bf w})$ minimizes the functional $\s{B}$, we deduce that $K^*(\cdot,\cdot,m) \in L^1((0,T) \times \bb{T}^N)$. Uniqueness of $m$ follows from the fact that $\s{K}_1$ is a convex set and that $\s{B}(m,{\bf w})$ is strictly convex in $m$.
\end{proof}

\subsection{Original problem is equivalent to smooth problem}
\label{sec:original_smooth}

In this subsection we prove
\begin{proposition} \label{prop:original}
The original problem \ref{pr:min_f} is equivalent to the smooth problem \ref{pr:smooth}, that is, the two problems have the same infimum. Equivalently, Problem \ref{pr:min_f} is in duality with Problem \ref{pr:dual}.
\end{proposition}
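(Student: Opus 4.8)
The plan is to prove two inequalities. The inequality $\inf_f \s{J}(f) \le \inf_{u \in \s{K}_0} \s{A}(u)$ is elementary: given $u \in \s{K}_0$, the function $f := -u_t + H(x,Du)$ is continuous, $u$ is a classical (hence viscosity) solution of (\ref{eq:back_hj}) with this right-hand side, and by uniqueness of viscosity solutions $u$ coincides with the value function (\ref{eq:value}). Therefore $\s{J}(f) = \s{A}(u)$, and taking the infimum over $u \in \s{K}_0$ gives the claim.

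The reverse inequality is the substantial one. Using Theorem \ref{thm:dual} it is equivalent to show that $\s{J}(f) \ge -\s{B}(\bar m,\bar{\bf w})$ for every $f \in C([0,T]\times\bb{T}^N)$, where $(\bar m,\bar{\bf w}) \in \s{K}_1$ is the minimizer of the dual problem furnished by Theorem \ref{thm:dual} (recall $\bar m \in L^q$ and $K^*(\cdot,\cdot,\bar m) \in L^1$, and $\bar{\bf w}(t,x) \in \bar m(t,x)c(x,A)$ a.e.). Let $u$ be the value function (\ref{eq:value}) associated with $f$; it is Lipschitz, so the pairings $\int u(0,x)\,dm_0$ and $\int u_T(x)\bar m(T,x)\,dx$ make sense. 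By the Fenchel--Young inequality $K(t,x,f) + K^*(t,x,\bar m) \ge f\,\bar m$ pointwise, so after integrating it suffices to prove the ``fundamental identity'' in inequality form,
\begin{equation} \label{eq:fund_ineq}
\int_0^T\int_{\bb{T}^N} f\,\bar m \,dx\,dt \ \ge\ \int_{\bb{T}^N} u(0,x)\,dm_0(x) - \int_{\bb{T}^N} u_T(x)\,\bar m(T,x)\,dx .
\end{equation}

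If $u$ were $C^1$ this would follow by differentiating $t \mapsto \int u(t)\bar m(t)$, using the continuity equation together with the bound $Du \cdot \bar{\bf w} \ge -\bar m\,H(x,Du)$, which comes from $\bar{\bf w}(t,x) \in \bar m(t,x) c(x,A)$ and $H(x,p) = \sup_{a}(-c(x,a)\cdot p)$. Since $u$ is only a viscosity solution, I would instead exploit the control-theoretic meaning of $u$ together with the superposition principle (see \cite{ambrosio08}). Writing $\bar{\bf v} := \bar{\bf w}/\bar m$ on $\{\bar m > 0\}$ and $\bar{\bf v} := 0$ elsewhere, the field $\bar{\bf v}$ is bounded by $c_1$ and $(\bar m,\bar m\bar{\bf v})$ solves the continuity equation with $\bar m(0)=m_0$; hence $\bar m(t)\,dx = (e_t)_\#\eta$ for a finite measure $\eta$ on $C([0,T];\bb{T}^N)$ concentrated on absolutely continuous curves $\gamma$ with $\dot\gamma(t) = \bar{\bf v}(t,\gamma(t)) \in c(\gamma(t),A)$ for a.e.\ $t$. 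By a measurable selection (Filippov's lemma, using compactness of $A$ and continuity of $c$) each such $\gamma$ is an admissible trajectory of (\ref{eq:open_loop1}) for some $\alpha \in L^\infty(0,T;A)$, so the definition of the value function gives $u(0,\gamma(0)) \le u_T(\gamma(T)) + \int_0^T f(s,\gamma(s))\,ds$ for $\eta$-a.e.\ $\gamma$. Integrating this against $\eta$ and using $(e_0)_\#\eta = m_0\,dx$, $(e_s)_\#\eta = \bar m(s)\,dx$ together with Fubini yields exactly (\ref{eq:fund_ineq}); combined with Theorem \ref{thm:dual} this also gives the stated duality with Problem \ref{pr:dual}.

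I expect the main obstacle to be the use of the superposition principle: one must verify its hypotheses (a bounded velocity field, $\bar m \in L^1$, and $m_0$, $\bar m(t)$ absolutely continuous with respect to Lebesgue measure) and check that the inclusion $\bar{\bf v}(t,\gamma(t)) \in c(\gamma(t),A)$ persists along $\eta$-a.e.\ curve --- which holds because $\bar m(t)\,dx$ charges no Lebesgue-null set, in particular not the null set where $\bar{\bf v} \notin c(\cdot,A)$. Once this representation is available, passing from the formal identity to the viscosity setting is precisely the assertion that $u$ is the value function of (\ref{eq:value}), and here the merely linear growth of $H$ causes no difficulty. The remaining points --- finiteness of all the integrals, measurability of the Filippov selection, and the Fubini step --- are routine under the assumptions of Section \ref{sec:assumptions}.
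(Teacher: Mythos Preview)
Your proposal is correct and follows essentially the same route as the paper: the easy inequality is handled identically, and for the hard inequality the paper also applies the superposition principle to the dual minimizer $(\bar m,\bar{\bf w})$, uses that $\eta$-a.e.\ curve is an admissible trajectory of (\ref{eq:open_loop1}), and then invokes the dynamic programming principle (your use of the value-function formula at $t=0$, $s=T$ is the same thing) before finishing with Fenchel--Young. The only cosmetic differences are that the paper isolates the inequality (\ref{eq:fund_ineq}) as a separate lemma (Lemma~\ref{lem:ibp_inequality}) and leaves implicit the measurable-selection step you attribute to Filippov.
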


To prove Proposition \ref{prop:original}, it suffices to show that $\inf_{f \in C([0,T] \times \bb{T}^N)} \s{J}(f) \geq - \inf_{(m,{\bf w}) \in \s{K}_1} \s{B}(m,{\bf w})$. Indeed, the opposite inequality has already been proved by Theorem \ref{thm:dual}.

Let us first outline how to proceed. For a vector field $\bf v$, let $X_{\bf v}(t,x)$ be the flow corresponding to the ODE
\begin{equation} \label{eq:ode}
\dot{X}(t) = {\bf v}(t,X(t)), ~~ X(0) = x.
\end{equation}
Provided $\bf v$ is regular enough (say, Lipschitz in the space variable) we have from standard Cauchy-Lipschitz theory that the pushforward $m(t,\cdot) = X(t,\cdot) \# \s{L}^N$, where $\s{L}^N$ is Lebesgue measure, is the solution to the continuity equation
\begin{equation} \label{eq:continuity}
m_t + \mathrm{div}(m{\bf v}) = 0, ~~ m(0)= m_0.
\end{equation}
Suppose, moreover, that ${\bf v}(t,x) \in c(x,A)$ almost everywhere. Then the trajectory $X_{\bf v}(\cdot,x)$ is in fact a solution of the open loop controlled ODE (\ref{eq:open_loop1}). With this in mind and taking into account formula (\ref{eq:value}), we perform the following formal computations:
\begin{align*}
\inf_f \s{J}(f) 
& = \inf_f \int_0^T \int_{\bb{T}^N} K(t,x,f(t,x)) dx dt - \int_{\bb{T}^N} \inf_y \{ u_T(y(T)) + \int_0^T f(t,y(t))dt \}m_0(x) dx\\
& \geq \inf_f \int_0^T \int_{\bb{T}^N} K(t,x,f(t,x)) dx dt - \inf_{\bf v}\int_{\bb{T}^N}  u_T(X_{\bf v}(T,x)) + \int_0^T f(t,X_{\bf v}(t,x))dt ~m_0(x) dx\\
& = \inf_f\sup_{m,{\bf v}} \int_0^T \int_{\bb{T}^N} K(t,x,f(t,x)) dx dt - \int_{\bb{T}^N}  u_T(x)m(T,x)dx - \int_{\bb{T}^N}\int_0^T f(t,x)m(t,x)dt dx\\
& = \sup_{m,{\bf v}} -\int_0^T \int_{\bb{T}^N} K^*(t,x,m(t,x)) dx dt - \int_{\bb{T}^N}  u_T(x)m(T,x)dx = -\inf_{m,{\bf v}} \s{B}(m,m{\bf v}).
\end{align*}
Of course, the above calculation is merely formal, since exchanging the infimum over trajectories for an infimum over (regular) vector fields is hardly straightforward. Nevertheless, the calculation is suggestive, in that it suggest a direct link between the original and dual problems (no smoothness required on $u$).

In order to justify the above calculations when $\bf v$ is not regular, we need a result from transport theory that allows us to define the flow at least in a {\em probabilistic} sense. This result is called the {\em superposition principle}. We first define a {\em superposition solution,} and then give the statement of the corresponding principle.
\begin{definition}[Superposition solution] \label{def:superposition}
Let $m(t)$ be a positive measure-valued solution of the continuity equation (\ref{eq:continuity}). We say that $m$ is a {\em superposition solution} if 
there exists a positive measure $\eta$ on the set $C([0,T] \times \bb{T}^N)$ such that $\eta$ is concentrated on integral curves of the ODE \ref{eq:ode}, i.e.
\begin{equation}
\int_{C([0,T] \times \bb{T}^N)} \left|\gamma(t) - x - \int_0^t{\bf v}(s,\gamma(s))ds \right| d\eta(\gamma) = 0,
\end{equation}
and such that $m(t)$ is the pushforward of $\eta$ under the evaluation function $\gamma \mapsto \gamma(t)$, i.e.
\begin{equation}
\int_{\bb{T}^N} \phi(x)dm(t,x) = \int_{C([0,T] \times \bb{T}^N)} \phi(\gamma(t)) d\eta(\gamma).
\end{equation}
\end{definition}
\begin{theorem}[Superposition principle] \label{thm:superposition}
Let $m(t)$ be a positive measure-valued solution of the continuity equation (\ref{eq:continuity}), and assume that
\begin{equation}
\int_0^T \int_{\bb{T}^N} \frac{|{\bf v}(t,x)|}{1+|x|} dm(t,x)dt < \infty.
\end{equation}
Then $m$ is a {\em superposition solution}.
\end{theorem}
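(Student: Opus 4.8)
This is Ambrosio's superposition principle; the simplest route is to cite \cite{ambrosio08}, but here is the line of argument I would follow, which simplifies because $\bb{T}^N$ is compact. After normalizing (the continuity equation $(\ref{eq:continuity})$ conserves mass) I may take each $m(t)$ to be a probability measure, and since $(1+|x|)^{-1}$ is comparable to $1$ on the torus the hypothesis reduces to the statement that $g(t):=\int_{\bb{T}^N}|{\bf v}(t,\cdot)|\,dm(t)$ lies in $L^1(0,T)$. The plan is to approximate $(m,{\bf v})$ by a smooth pair still solving the continuity equation, lift the smooth problem to path space via its classical flow, extract a limiting probability measure $\eta$ on $C([0,T];\bb{T}^N)$ by compactness, and check that $\eta$ is concentrated on integral curves of ${\bf v}$ with time-marginals $m(t)$.

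For the smoothing I would mollify in space only: with $\rho_\epsilon$ a mollifier on $\bb{T}^N$, $\sigma$ Lebesgue measure, and ${\bf w}(t,\cdot):={\bf v}(t,\cdot)m(t,\cdot)$ the flux, set $m^\epsilon(t):=m(t)*\rho_\epsilon+\epsilon\sigma$ and ${\bf v}^\epsilon:=({\bf w}(t)*\rho_\epsilon)/m^\epsilon(t)$. Convolution commutes with the equation, so $(m^\epsilon,{\bf v}^\epsilon)$ solves $(\ref{eq:continuity})$ classically; $m^\epsilon(t)$ is smooth and bounded below by $\epsilon$, ${\bf v}^\epsilon(t,\cdot)$ is smooth in $x$ with an $L^1$-in-$t$ Lipschitz bound, $m^\epsilon(t)\rightharpoonup m(t)$ for every $t$, and Jensen's inequality inside the convolution gives $\int_{\bb{T}^N}|{\bf v}^\epsilon(t,\cdot)|\,dm^\epsilon(t)\le g(t)$. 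Since ${\bf v}^\epsilon$ is Lipschitz in $x$, its flow $X^\epsilon$ of $\dot X={\bf v}^\epsilon(t,X)$, $X(0)=x$, is classical and $m^\epsilon(t)=X^\epsilon(t,\cdot)\# m^\epsilon(0)$ by the method of characteristics; I then let $\eta^\epsilon$ be the push-forward of $m^\epsilon(0)$ by $x\mapsto X^\epsilon(\cdot,x)$, so that $\eta^\epsilon$ is concentrated on ODE solutions for ${\bf v}^\epsilon$, has time-$t$ marginal $m^\epsilon(t)$ under the evaluation map $e_t:\gamma\mapsto\gamma(t)$, and $\int\big(\int_0^T|\dot\gamma|\,dt\big)\,d\eta^\epsilon=\int_0^T\int_{\bb{T}^N}|{\bf v}^\epsilon|\,dm^\epsilon\,dt\le\int_0^T g<\infty$.

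Because $\bb{T}^N$ is compact, by Arzel\`a--Ascoli tightness of $\{\eta^\epsilon\}$ in $\s{P}(C([0,T];\bb{T}^N))$ reduces to equicontinuity in probability: from $|\gamma(t)-\gamma(s)|\le\int_s^t|\dot\gamma|$ one gets $\int\big(\int_s^t|\dot\gamma|\,dr\big)\,d\eta^\epsilon\le\int_s^t g$, which tends to $0$ with $|t-s|$ uniformly in $\epsilon$ by absolute continuity of the integral of $g\in L^1$, and a partition-plus-Markov argument upgrades this to a uniform modulus-of-continuity estimate. Passing to a subsequence, $\eta^\epsilon\rightharpoonup\eta$; continuity of $e_t$ and $m^\epsilon(t)\rightharpoonup m(t)$ give $e_t\#\eta=m(t)$ (the marginal condition of Definition \ref{def:superposition}), and lower semicontinuity of $\gamma\mapsto\int_0^T|\dot\gamma|\,dt$ shows $\eta$-a.e.\ curve is absolutely continuous. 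It remains to show $\dot\gamma(t)={\bf v}(t,\gamma(t))$ for a.e.\ $t$, $\eta$-a.e.\ $\gamma$; since ${\bf v}$ is only $L^1(dm\,dt)$, I would approximate it by continuous fields ${\bf w}^k$ with $\int_0^T\int_{\bb{T}^N}|{\bf v}-{\bf w}^k|\,dm(t)\,dt\to0$, use that $\gamma\mapsto\int_0^T|\dot\gamma-{\bf w}^k(t,\gamma)|\,dt$ is lower semicontinuous for uniform convergence to bound $\int\big(\int_0^T|\dot\gamma-{\bf w}^k(t,\gamma)|\,dt\big)\,d\eta$ by $\liminf_\epsilon\int_0^T\int_{\bb{T}^N}|{\bf v}^\epsilon-{\bf w}^k|\,dm^\epsilon\,dt$, and estimate this last quantity by writing $|{\bf v}^\epsilon-{\bf w}^k|m^\epsilon=|{\bf w}*\rho_\epsilon-{\bf w}^k m^\epsilon|$ and splitting off a convolution of ${\bf w}-{\bf w}^k m$ (whose total variation is $\int_0^T\int|{\bf v}-{\bf w}^k|\,dm(t)\,dt$), the standard commutator between multiplication by ${\bf w}^k$ and mollification (small since ${\bf w}^k$ is uniformly continuous), and an $O(\epsilon)$ term. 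Letting $\epsilon\to0$ then $k\to\infty$, together with the trivial identity $\int\big(\int_0^T|{\bf w}^k(t,\gamma)-{\bf v}(t,\gamma)|\,dt\big)\,d\eta=\int_0^T\int|{\bf w}^k-{\bf v}|\,dm(t)\,dt$, gives $\int\big(\int_0^T|\dot\gamma-{\bf v}(t,\gamma)|\,dt\big)\,d\eta=0$, which with $e_t\#\eta=m(t)$ is exactly the assertion that $m$ is a superposition solution.

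The main obstacle is the borderline integrability underlying the last two steps: only the $L^1$-in-time bound $\int_0^T g<\infty$ is available, which is just enough for tightness via absolute continuity of the integral, and the ODE constraint must be passed to the limit for a merely $L^1$ (hence discontinuous) velocity field, which forces the continuous-approximant-plus-commutator detour. Both difficulties evaporate in the setting actually needed in this paper, where the velocities arising from $\s{K}_1$ satisfy ${\bf v}(t,x)\in c(x,A)$ and hence $|{\bf v}|\le c_1$: then ${\bf v}^\epsilon$ is uniformly bounded by $c_1$, the curves in $\mathrm{supp}\,\eta^\epsilon$ are uniformly Lipschitz, tightness is immediate, and the limit identification is routine.
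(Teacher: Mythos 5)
The paper itself offers no proof of this theorem (it simply cites \cite{ambrosio08}), so the comparison is with the standard argument of that reference, which is indeed the one you sketch: mollify, lift the regularized solution to path space via its classical flow, prove tightness, and identify the limit by approximating ${\bf v}$ with continuous fields plus a commutator estimate. Your smoothing step (including the $\epsilon\sigma$ regularization, the bound $\int|{\bf v}^\epsilon|\,dm^\epsilon\le g(t)$, and the Carath\'eodory Lipschitz flow) and your limit-identification step are correct and essentially verbatim the standard proof.

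The genuine gap is the tightness step. Your claim that the bound $\int\bigl(\int_s^t|\dot\gamma|\,dr\bigr)\,d\eta^\epsilon\le\int_s^t g$ can be upgraded by ``partition plus Markov'' to a uniform modulus of continuity does not close: Markov on each partition interval gives $\eta^\epsilon\bigl(\int_{t_i}^{t_{i+1}}|\dot\gamma|>\kappa\bigr)\le\kappa^{-1}\int_{t_i}^{t_{i+1}}g$, and summing over the $\sim\theta^{-1}$ intervals of a mesh-$\theta$ partition yields only $\kappa^{-1}\|g\|_{L^1}$, which does not shrink as $\theta\to0$. In fact no argument using only the bound $\int|\dot\gamma(t)|\,d\eta^\epsilon\le g(t)$ can work: on $\bb{T}^2$, split unit mass into $n$ blobs of mass $1/n$ at distinct heights and let blob $k$ traverse a full horizontal loop at speed $n$ during the window $[(k-1)/n,k/n)$; each such pair $(m_n,{\bf v}_n)$ is a smooth solution with $\int|{\bf v}_n(t)|\,dm_n(t)\equiv1$, yet every curve oscillates by a fixed amount within a time window of length $1/n$, so the lifted measures charge no fixed compact subset of $C([0,T];\bb{T}^2)$ for large $n$. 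This is exactly why the literature either assumes superlinear integrability ($\int\int|{\bf v}|^p\,dm\,dt<\infty$, $p>1$, as in Ambrosio--Gigli--Savar\'e, where tightness comes from the coercive functional $\gamma\mapsto\int|\dot\gamma|^p$) or handles the merely integrable case by an extra reduction to bounded (or linear-growth) fields rather than by a direct Markov bound; the mollified family of a fixed $(m,{\bf v})$ is tight, but proving it needs one of these devices. As you correctly note at the end, this subtlety is irrelevant for the use made of the theorem in this paper, since every admissible velocity satisfies $|{\bf v}|\le c_1$, the approximating curves are uniformly Lipschitz, and tightness is immediate; but as a proof of the theorem as stated, the tightness step is missing.
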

For a proof of this result, see \cite{ambrosio08}.

We now prove the following crucial lemma.
\begin{lemma} \label{lem:ibp_inequality}
Suppose ${\bf v}$ is a vector field satisfying ${\bf v}(t,x) \in c(x,A)$ and $m \in L^1$ satisfies the continuity equation (\ref{eq:continuity}) in the sense of distributions. Let $f$ be a continuous function, and let $u$ be the corresponding value function given by Equation (\ref{eq:value}). Then for $0 \leq t \leq s \leq T$ we have
\begin{equation} \label{eq:integration_by_parts}
\int_{\bb{T}^N} u(s,x)m(s,x) - u(t,x)m(t,x)dx  + \int_t^s \int_{\bb{T}^N}  f(\tau,x)m(\tau,x)dx d\tau \geq 0.
\end{equation}
\end{lemma}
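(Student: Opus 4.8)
The plan is to use the superposition principle (Theorem \ref{thm:superposition}) to represent the solution $m$ of the continuity equation as a superposition of integral curves of the ODE $\dot\gamma(\tau)={\bf v}(\tau,\gamma(\tau))$, and then to exploit the fact that each such curve is an admissible trajectory for the optimal control problem defining $u$. First I would check the hypothesis of the superposition principle: since ${\bf v}(t,x)\in c(x,A)$ and $c$ is bounded by $c_1=\sup_{x,a}|c(x,a)|$, the integral $\int_0^T\int_{\bb T^N}\frac{|{\bf v}(t,x)|}{1+|x|}dm(t,x)dt$ is bounded by $c_1\int_0^T m(t,\bb T^N)dt=c_1 T\,m_0(\bb T^N)<\infty$ (mass is conserved). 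So $m$ is a superposition solution: there is a positive measure $\eta$ on $C([0,T]\times\bb T^N)$ (paths) concentrated on integral curves of \eqref{eq:ode}, with $m(t)=(e_t)_\#\eta$ where $e_t(\gamma)=\gamma(t)$.

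Next, fix $0\le t\le s\le T$. For $\eta$-almost every path $\gamma$, the restriction $\gamma|_{[t,s]}$ satisfies $\dot\gamma(\tau)={\bf v}(\tau,\gamma(\tau))\in c(\gamma(\tau),A)$ a.e., so by a measurable selection argument there is a control $\alpha\in L^\infty(t,s;A)$ with $\dot\gamma(\tau)=c(\gamma(\tau),\alpha(\tau))$; that is, $\gamma$ solves the open-loop ODE \eqref{eq:open_loop1} starting from $\gamma(t)$ at time $t$. The dynamic programming principle for the value function \eqref{eq:value} then gives, for this admissible trajectory,
\begin{equation*}
u(t,\gamma(t))\le u(s,\gamma(s))+\int_t^s f(\tau,\gamma(\tau))d\tau .
\end{equation*}
(Here I only need the sub-optimality inequality: $u(t,\cdot)$ is an infimum over all trajectories, and $\gamma$ is one particular choice, with the remaining cost from time $s$ onward at least $u(s,\gamma(s))$.) Rearranging, $u(s,\gamma(s))-u(t,\gamma(t))+\int_t^s f(\tau,\gamma(\tau))d\tau\ge 0$ for $\eta$-a.e.\ $\gamma$.

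Finally, integrate this inequality against $d\eta(\gamma)$. Using $m(\tau)=(e_\tau)_\#\eta$, the term $\int u(s,\gamma(s))d\eta(\gamma)=\int_{\bb T^N}u(s,x)m(s,x)dx$, similarly for the $t$-term, and by Fubini (justified since $f$ is continuous hence bounded and $\eta$ is finite with mass $m_0(\bb T^N)$) $\int\!\int_t^s f(\tau,\gamma(\tau))d\tau\, d\eta(\gamma)=\int_t^s\!\int_{\bb T^N}f(\tau,x)m(\tau,x)dx\,d\tau$. This yields exactly \eqref{eq:integration_by_parts}. The main obstacle I anticipate is the measurability/selection step: producing, in a jointly measurable way over the path space, a control $\alpha$ realizing $\dot\gamma={\bf v}(\tau,\gamma(\tau))$ as $c(\gamma(\tau),\alpha(\tau))$, and ensuring the dynamic programming inequality applies to $\eta$-a.e.\ curve despite ${\bf v}$ being merely $L^1$ (so that curves are defined only $\eta$-a.e.). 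The convexity and $B_{c_0}(0)\subset c(x,A)$ assumptions, together with continuity of $c$, should make the selection routine via Filippov's lemma, but care is needed that the exceptional $\eta$-null set does not interfere with the final integration.
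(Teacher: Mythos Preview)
Your proposal is correct and follows essentially the same route as the paper: apply the superposition principle to write $m(t)=(e_t)_\#\eta$, use the dynamic programming inequality along $\eta$-a.e.\ curve, then integrate and apply Fubini. The paper's proof is nearly identical, though it is terser about the points you flag: it simply asserts that since ${\bf v}(t,x)\in c(x,A)$ a.e., each integral curve $\gamma$ is admissible for the control problem and hence satisfies $u(t,\gamma(t))\le u(s,\gamma(s))+\int_t^s f(\tau,\gamma(\tau))\,d\tau$, without explicitly naming Filippov's lemma or worrying about the $\eta$-null set; your caution there is well placed but, as you suspect, routine.
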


\begin{proof}
Because $c(x,A)$ is uniformly bounded, it follows that $\bf v \in L^\infty$. Since $m \in L^1$, we have that $\bf v$ satisfies the integrability condition of the superposition principle. We apply Theorem \ref{thm:superposition} to see that $m(t)$ is the push-forward under the evaluation map $t \mapsto \gamma(t)$ of some positive measure  $\eta$ on $C([0,T];\bb{T}^N)$ which is concentrated on trajectories of ${\bf v}$, i.e. solutions of the ODE (\ref{eq:ode}). This means that for any continuous function $\phi$,
\begin{equation}
\int_{\bb{T}^N} \phi(x)m(t,x)dx = \int_{C([0,T];\bb{T}^N)} \phi(\gamma(t))d\eta(\gamma)
\end{equation}

We recall that $u$ satisfies the {\em dynamic programming principle} \cite{bardi97}:
\begin{equation} \label{eq:dpp}
u(t,x) = \inf\{u(s,y_{x,t}^\alpha(s)) + \int_t^s f(\tau,y_{x,t}^\alpha(\tau))ds : \alpha \in L^\infty(t,s;A)\},
\end{equation}
where we recall that $y_{x,t}^\alpha$ is the solution to the ODE (\ref{eq:open_loop1}). Appealing directly to formula (\ref{eq:dpp}) and the fact that ${\bf v}(t,x) \in c(x,A)$ a.e., it follows that for each trajectory $\gamma$ of the vector field $\bf v$ we have
\begin{equation}
u(t,\gamma(t)) \leq u(s,\gamma(s)) + \int_t^s f(\tau,\gamma(\tau))d\tau.
\end{equation}
Integrating over $C([0,T];\bb{T}^N)$ and using the fact that $\eta$ is concentrated on such trajectories, we see (after using the Fubini theorem to switch the order of integration) that
\begin{align}
\int_{\bb{T}^N} u(t,x)m(t,x)dx &= \int_{C([0,T];\bb{T}^N)} u(t,\gamma(t))d\eta(\gamma) \\
&\leq \int_{C([0,T];\bb{T}^N)} \left(u(s,\gamma(s)) + \int_t^s f(\tau,\gamma(\tau))d\tau\right) d\eta(\gamma)\\
&= \int_{\bb{T}^N} u(s,x)m(s,x)dx + \int_t^s \int_{\bb{T}^N}f(\tau,x)m(\tau,x)dx d\tau,
\end{align}
as desired.
\end{proof}

Lemma \ref{lem:ibp_inequality} allows us to prove Proposition \ref{prop:original}. However, its significance also lies in the information it gives us about solutions $u$ which, as we will show, can be retained when passing to the limit on a sequence of minimizers. Thus the lemma plays a crucial role in the construction of the relaxed problem below.

\begin{proof}[Proof of Proposition \ref{prop:original}]
We recall that Problem \ref{pr:min_f} is to minimize
\begin{equation}
\s{J}(f) = \int_0^T \int_{\bb{T}^N} K(t,x,f(t,x)) dx dt - \int_{\bb{T}^N} u(0,x) dm_0(x)
\end{equation}
where $u$ is given by (\ref{eq:value}).
On the one hand, we have that
\begin{equation} \label{eq:less_than}
\inf_f \s{J}(f) \leq \inf_{u \in \s{K}_0}\s{A}(u).
\end{equation}
Indeed, if $u \in \s{K}_0$, then setting
\begin{equation}
f = -u_t + H(x,Du),
\end{equation}
it follows that $f$ is a continuous function and that therefore $u$ is the unique viscosity solution to the PDE. Hence $u$ also satisfies (\ref{eq:value}) and thus $\s{J}(f) \leq \s{A}(u)$. 

It remains to prove the other inequality, which by Theorem \ref{thm:dual} is
\begin{equation} \label{eq:greater_than}
\inf_f \s{J}(f) \geq \inf_{u \in \s{K}_0}\s{A}(u) = -\min_{(m,{\bf w}) \in \s{K}_1} \s{B}(m,{\bf w}).
\end{equation}
In particular, if $(m,{\bf w}) \in \s{K}_1$ is a minimizer of Problem \ref{pr:dual}, then it is enough to show that
\begin{equation}
\s{J}(f) \geq -\s{B}(m,{\bf w}) ~~\text{for all} ~ f \in C([0,T] \times \bb{T}^N).
\end{equation}
By Lemma \ref{lem:ibp_inequality} we have
\begin{align}
\s{J}(f) &= \int_0^T \int_{\bb{T}^N} K(t,x,f(t,x)) dx dt - \int_{\bb{T}^N} u(0,x)m_0(x)dx\\
&\geq \int_0^T \int_{\bb{T}^N} K(t,x,f(t,x)) dx dt - \int_{\bb{T}^N} u_T(x)m(T,x)dx - \int_0^T \int_{\bb{T}^N}f(t,x)m(t,x)dx dt\\
&\geq -\int_0^T \int_{\bb{T}^N} K^*(t,x,m(t,x)) dx dt - \int_{\bb{T}^N} u_T(x)m(T,x)dx = -\s{B}(m,{\bf w}).
\end{align}
This completes the proof of Lemma \ref{prop:original}.

\end{proof}

\subsection{Relaxed problem and its minimizers}
\label{sec:relaxed}

As previously stated, the goal of this section is to construct a set $\tilde{\s{K}}$ on which a relaxed problem can be appropriately defined so that there exists a minimizer $(u,f) \in \tilde{\s{K}}$ of the functional $\s{A}(u,f)$. We will define that space here, but first we take care to define some necessary preliminaries.

Given $u \in BV$, there is a Radon measure $\mu$ and a $\mu$-measurable vector field $\sigma = \sigma(t,x)$ such that $|\sigma(t,x)| = 1$ for $\mu$-almost everywhere $(t,x)$ and
\begin{equation}
\int_0^T \int_{\bb{T}^N} u(t,x)(\phi_t(t,x) + \mathrm{div} ~ \phi(t,x))dt dx = - \int_0^T \int_{\bb{T}^N} \phi(t,x) \cdot \sigma(t,x) d\mu(t,x)
\end{equation}
for smooth vector fields $\phi$ with compact support in $(0,T) \times \bb{T}^N$.
(See, for example, \cite{evans92}). The vector measure $\sigma \mu$ is the distributional derivative of $u$. We can write $\sigma(t,x) = (\sigma_1(t,x),\sigma_N(t,x))$, $\sigma_1$ a real-valued function and $\sigma_N$ a $\bb{R}^N$-valued function (and both $\mu$-measurable). We define $\partial_t u := \sigma_1 d\mu$ and $Du := \sigma_N d\mu$, i.e.
\begin{align*}
\int_0^T \int_{\bb{T}^N} \phi(t,x)u_t(t,x) &:= \int_0^T \int_{\bb{T}^N} \phi(t,x)\sigma_1(t,x)d\mu(t,x),\\
\int_0^T \int_{\bb{T}^N} {\bf \psi}(t,x)\cdot Du(t,x) &:= \int_0^T \int_{\bb{T}^N} \psi(t,x)\cdot \sigma_N(t,x)d\mu(t,x)
\end{align*}
for continuous functions $\phi$ and vector fields $\psi$. In particular, if $\phi,\psi$ are smooth with compact support in $(0,T) \times \bb{T}^N$, then it follows from the above definitions that
\begin{equation}
\int_0^T \int_{\bb{T}^N} u(t,x)(\phi_t(t,x) + \mathrm{div}~\psi(t,x))dt dx = - \int_0^T \int_{\bb{T}^N} \phi(t,x)u_t(t,x) + \psi(t,x) \cdot Du(t,x).
\end{equation}
We define $|Du| = |\sigma_N| d\mu$. Note that $Du$ and $|Du|$ are Radon measures on $[0,T] \times \bb{T}^N$ (the first being a vector-measure). Let $\frac{Du}{|Du|}$ denote the Radon-Nikodym derivative of $Du$ with respect to $|Du|$. We have, in fact,
\begin{equation}
\frac{Du}{|Du|} = \left\{\begin{array}{ccc}
\frac{\sigma_N}{|\sigma_N|} & \text{on} & \{\sigma_N \neq 0\}\\
0 & \text{on} & \{\sigma_N = 0\}
\end{array}
\right.
\end{equation}
The set $\{\sigma_N = 0\}$ is of course $|Du|$-negligible, and $\frac{Du}{|Du|}$ is equal to one $|Du|$-almost everywhere.

We will denote by $H(x,Du)$ the measure $H(x,\sigma_N)d\mu$. Since $H$ is positively homogeneous in the second variable, this is equal to the measure $H(x,\frac{\sigma_N}{|\sigma_N|})|\sigma_N|d\mu = H(x,\frac{Du}{|Du|})|Du|$.

With the above notation, we define the relaxed set.
\begin{definition}[Relaxed set] \label{def:relaxed_space}
The set $\tilde{\s{K}}$ will be defined as the set of all pairs $(u,f) \in BV \times L^p$ such that $u \in L^\infty$, $u(T,\cdot) = u_T$ in the sense of traces, and
$-u_t + H(x,Du) \leq f$ in the sense of measures, by which we mean that $f - H(x,Du) + u_t$ is a non-negative Radon measure (using the notation given above).
\end{definition}
We remark that $\tilde{\s{K}}$ is a convex subset of the space $BV \times L^p$. The condition $u \in L^\infty$ appears extraneous until we invoke Lemma \ref{lem:integration_by_parts} in the arguments below, for which it is necessary that $u$ is sufficiently regular.

We redefine the Relaxed Problem \ref{pr:relaxed} as
\begin{problem}[New Relaxed Problem] \label{pr:new_relaxed}
Find $\inf_{(u,f) \in \tilde{\s{K}}} \s{A}(u,f)$.
\end{problem}
Note that the relaxed problem has a very clear connection with the smooth problem \ref{pr:smooth}, since smooth solutions to the Hamilton-Jacobi equation are also immediately in the relaxed set. However, the relationship to the original problem (involving viscosity solutions) is not so immediately clear. The relationship between distributional solutions and viscosity solutions is, in general, not well-understood. The only results in this direction of which the author is aware are for elliptic and parabolic equations, as in e.g. \cite{ishii95,juutinen01}. One may not expect in general for the concepts to be equivalent for hyperbolic equations. In our case, the infimum for the original problem \ref{pr:min_f} and for the relaxed problem \ref{pr:new_relaxed} are both equal to that of the smooth problem \ref{pr:smooth} because the ``integration by parts" formula (\ref{eq:integration_by_parts}) holds for viscosity solutions, even those which are not smooth. See Proposition \ref{prop:relaxed} below.

Before proceeding to the proof of existence of minimizers in this relaxed set, we prove two useful lemmas. The first is a helpful characterization of the statement that $-u_t + H(x,Du) \leq f$ for a function $u \in BV$.
\begin{lemma} \label{lem:vector_field_condition}
For $u \in BV$ and $f$ an integrable function, the following are equivalent:
\begin{itemize}
\item $-u_t + H(x,Du) \leq f$ in the sense of measures, and
\item for every continuous vector field ${\bf \tilde{v}} = {\bf \tilde{v}}(t,x) \in c(x,A)$ we have
\begin{equation} \label{eq:inequality_in_measure}
-\int_0^T \int_{\bb{T}^N} \phi(t,x)u_t(t,x) + \phi(t,x){\bf \tilde{v}}(t,x) \cdot Du(t,x) \leq \int_0^T \int_{\bb{T}^N} f(t,x)\phi(t,x)dt dx.
\end{equation}
for every continuous function $\phi : [0,T] \times \bb{T}^N \to [0,\infty)$.
\end{itemize}
\end{lemma}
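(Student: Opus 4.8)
The plan is to reduce the equivalence to the dual description of the Hamiltonian, $H(x,p)=\sup_{v\in c(x,A)}(-v\cdot p)$, used $\mu$-pointwise with $p=\sigma_N(t,x)$, where $\mu$ and $\sigma=(\sigma_1,\sigma_N)$ are as in the notation preceding the lemma, so that $u_t=\sigma_1\,d\mu$, $Du=\sigma_N\,d\mu$ and $H(x,Du)=H(x,\sigma_N)\,d\mu$. Since $u\in BV$ on a bounded cylinder, $\mu$ is a finite Radon measure on the compact space $[0,T]\times\bb{T}^N$, and there a signed Radon measure is non-negative precisely when it integrates every non-negative $\phi\in C([0,T]\times\bb{T}^N)$ to a non-negative number. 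Writing $Du=\sigma_N\,d\mu$ in (\ref{eq:inequality_in_measure}), that inequality reads $-\int\phi\,\tilde v\cdot\sigma_N\,d\mu\le\int f\phi\,dt\,dx+\int\phi\,\sigma_1\,d\mu$, while the first bullet is equivalent to
\[
\int_0^T\!\!\int_{\bb{T}^N}\phi\,H(x,\sigma_N)\,d\mu\;\le\;\int_0^T\!\!\int_{\bb{T}^N} f\phi\,dt\,dx+\int_0^T\!\!\int_{\bb{T}^N}\phi\,\sigma_1\,d\mu,\qquad\forall\,\phi\in C([0,T]\times\bb{T}^N),\ \phi\ge0.
\]

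For the implication ``first $\Rightarrow$ second'', fix an admissible continuous vector field $\tilde v$ (so $\tilde v(t,x)\in c(x,A)$) and $\phi\ge0$; then $-\tilde v(t,x)\cdot\sigma_N(t,x)\le H(x,\sigma_N(t,x))$ pointwise by definition of $H$, so multiplying by $\phi$, integrating against $\mu$, and invoking the first bullet yields exactly (\ref{eq:inequality_in_measure}). The converse is the substantive direction: taking the supremum over admissible continuous $\tilde v$ in (\ref{eq:inequality_in_measure}) reduces everything to the single inequality
\[
\int_0^T\!\!\int_{\bb{T}^N}\phi\,H(x,\sigma_N)\,d\mu\;\le\;\sup_{\tilde v}\Bigl(-\int_0^T\!\!\int_{\bb{T}^N}\phi\,\tilde v\cdot\sigma_N\,d\mu\Bigr),
\]
i.e. the statement that nearly-optimal directions for $H(x,\sigma_N)$ can be realized by \emph{continuous} $c(\cdot,A)$-valued vector fields.

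To prove this I would argue by measurable selection and Lusin approximation. Since $A$ is compact, $c$ is continuous, and $(t,x)\mapsto\sigma_N(t,x)$ is $\mu$-measurable, the measurable maximum theorem applied to $a\mapsto-c(x,a)\cdot\sigma_N(t,x)$ provides a $\mu$-measurable map $w$ with $w(t,x)\in c(x,A)$ and $-w(t,x)\cdot\sigma_N(t,x)=H(x,\sigma_N(t,x))$ for $\mu$-a.e.\ $(t,x)$. Given $\varepsilon>0$, Lusin's theorem yields a compact set $E\subset[0,T]\times\bb{T}^N$ with $\mu(E^c)<\varepsilon$ on which $w$ is continuous. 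I then extend $w|_E$ to an admissible continuous field: extend each component of $w|_E$ by the Tietze theorem to a continuous $W:[0,T]\times\bb{T}^N\to\bb{R}^N$ and set $\tilde v_\varepsilon(t,x):=\mathrm{proj}_{c(x,A)}\!\big(W(t,x)\big)$. This is continuous because $x\mapsto c(x,A)$ is continuous in the Hausdorff distance (as $c$ is continuous and $A$ compact, it is both lower and upper semicontinuous with compact values) and the metric projection onto a Hausdorff-continuous family of compact convex sets depends continuously on the point and the set; moreover $\tilde v_\varepsilon=w$ on $E$ since $W=w\in c(\cdot,A)$ there. Using $|\tilde v_\varepsilon|\le c_1$, $|\sigma_N|\le1$ and $0\le H(x,\sigma_N)\le c_1$, the function $\phi\,\big(H(x,\sigma_N)+\tilde v_\varepsilon\cdot\sigma_N\big)$ vanishes $\mu$-a.e.\ on $E$ and is bounded by $2c_1\|\phi\|_\infty$ off $E$, so
\[
-\int_0^T\!\!\int_{\bb{T}^N}\phi\,\tilde v_\varepsilon\cdot\sigma_N\,d\mu\;\ge\;\int_0^T\!\!\int_{\bb{T}^N}\phi\,H(x,\sigma_N)\,d\mu-2c_1\|\phi\|_\infty\,\varepsilon,
\]
and letting $\varepsilon\to0$ proves the displayed inequality, hence the lemma.

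The main obstacle is this last construction: because the target convex set $c(x,A)$ moves with $x$, one cannot simply mollify the measurable optimal selection $w$, and must instead combine Lusin's theorem with a continuous-extension device adapted to the moving constraint — here Tietze extension followed by metric projection, which rests on Hausdorff-continuity of $x\mapsto c(x,A)$ and continuity of the projection. A minor technical point is consistency at the endpoints $t=0,T$: the test functions $\phi$ are only continuous on the closed cylinder, not compactly supported in its interior, so one should check that the conventions for $u_t$ and $Du$ as measures on $[0,T]\times\bb{T}^N$ are used consistently on both sides of each inequality; this does not affect the argument.
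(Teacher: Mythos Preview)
Your proof is correct and follows the same overall strategy as the paper: the easy direction is identical, and for the converse you fix $\phi\ge 0$ and produce a continuous admissible field $\tilde v$ making $-\int\phi\,\tilde v\cdot\sigma_N\,d\mu$ arbitrarily close to $\int\phi\,H(x,\sigma_N)\,d\mu$, invoking Lusin's theorem along the way. The difference lies in how the approximating field is built. The paper applies Lusin to the \emph{direction} $\nu=Du/|Du|$ (with respect to $|Du|$), covers the resulting compact set by finitely many balls on each of which a single fixed control $a_i$ is nearly optimal, and patches the fields $c(x,a_i)$ together via a partition of unity; convexity of $c(x,A)$ guarantees the convex combination stays admissible. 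You instead apply Lusin to a measurable \emph{optimal selection} $w$ (with respect to $\mu$), extend continuously by Tietze, and then project onto the moving set $c(x,A)$; here convexity enters through uniqueness and joint continuity of the metric projection onto a Hausdorff-continuous family of compact convex sets. Your route is conceptually direct but relies on slightly heavier tools (the measurable maximum theorem and the continuity of the moving projection), whereas the paper's partition-of-unity construction is more elementary and self-contained.
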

\begin{proof}
One direction is easy:  given a continuous vecor field ${\bf \tilde{v}}(t,x) \in c(x,A)$ we have
\begin{equation}
-{\bf \tilde{v}}(t,x) \cdot \sigma_N(t,x) \leq H(x,\sigma_N(t,x)),
\end{equation}
so that if $-u_t + H(x,Du) \leq f$, then (\ref{eq:inequality_in_measure}) immediately follows.

For the other direction, let $\phi \geq 0$ be any continuous function. We want to construct a continuous vector field ${\bf \tilde{v}}(t,x) \in c(x,A)$ such that $-\iint \phi {\bf \tilde{v}} \cdot Du$ is arbitrarily close to $\iint \phi H(x,Du)$. We will be able to conclude from this that
\begin{equation} 
\int_0^T \int_{\bb{T}^N} \phi(t,x)(-u_t(t,x) + H(x,Du(t,x))) \leq \int_0^T \int_{\bb{T}^N} f(t,x)\phi(t,x)dt dx.
\end{equation}
Since $\phi$ is arbitrary, this implies $-u_t + H(x,Du) \leq f$ in the sense of measures.

Let $\epsilon > 0$. By Lusin's Theorem, there exists a compact set $K \subset [0,T] \times \bb{T}^N$ such that $|Du|([0,T] \times \bb{T}^N \setminus K) \leq \epsilon$ and the restriction $\nu|_K$ to $K$ of the function $\nu := \frac{Du}{|Du|}$ is continuous. Fix a $(t_0,x_0) \in K$. We can pick an optimal control $a_0 \in A$ such that $-c(x_0,a_0) \cdot \nu(t_0,x_0) = H(x_0,\nu(t_0,x_0))$. Since $(t,x) \mapsto -c(x,a) \cdot \nu(t,x)$ is (uniformly) continuous on $K$, uniformly in $a$, there exists a $\delta > 0$ (independent of $(t_0,x_0)$) such that $H(x,\nu(t,x)) \leq \epsilon-c(x,a_0) \cdot \nu(t,x)$ for all $(t,x) \in B_\delta(t_0,x_0) \cap K$.

Now cover $K$ with finitely many $\delta$-balls $B_\delta(t_i,x_i)$, $i= 1,\ldots,n$, and pick $a_i$ the optimal control corresponding to $(t_i,x_i)$. Let $\rho_1,\ldots,\rho_n$ be a partition of unity subordinate to this cover. Define
\begin{equation}
{\bf \tilde{v}}(t,x) = \sum_{i=1}^n \rho_i(t,x)c(x,a_i) = \sum_{(t,x) \in B_\delta(t_i,x_i)} \rho_i(t,x)c(x,a_i).
\end{equation}
Then ${\bf \tilde{v}}$ is continuous, and by the convexity of $c(x,A)$ for all $x$ we have that ${\bf \tilde{v}}(t,x) \in c(x,A)$. Moreover we get the estimate
\begin{align*}
-{\bf \tilde{v}}(t,x) \cdot \nu(t,x) &= -\sum_{(t,x) \in B_\delta(t_i,x_i)} \rho_i(t,x)c(x,a_i) \cdot \nu(t,x)\\
&\geq (H(x,\nu(t,x)) - \epsilon)\sum_{(t,x) \in B_\delta(t_i,x_i)} \rho_i(t,x)\\
&= H(x,\nu(t,x)) - \epsilon
\end{align*}
for all $(t,x) \in K$. We compute
\begin{align*}
\iint \phi(t,x)H(x,Du(t,x)) &= \iint_K \phi(t,x)H(x,\nu(t,x))|Du|(t,x) + \iint_{[0,T] \times \bb{T}^N \setminus K} \phi(t,x)H(x,\nu(t,x))|Du|(t,x)\\
&\leq \iint_K \phi(t,x)(\epsilon - {\bf \tilde{v}}(t,x) \cdot \nu(t,x))|Du|(t,x) + C\|\phi\|_\infty |Du|([0,T] \times \bb{T}^N \setminus K) \\
&\leq  -\iint_K \phi(t,x){\bf \tilde{v}}(t,x) \cdot Du(t,x) + \|\phi\|_\infty \epsilon |Du|(K) + C\|\phi\|_\infty \epsilon,
\end{align*}
where we have used the bounds on $H$ to obtain $H(x,\nu(t,x)) \leq C|\nu(t,x)| = C$. Similarly, since ${\bf \tilde{v}}$ is bounded it follows that
\begin{equation}
\left|\iint_{[0,T] \times \bb{T}^N \setminus K} \phi(t,x){\bf \tilde{v}}(t,x) \cdot Du(t,x) \right| \leq \|\phi\|_\infty \|{\bf \tilde{v}}\|_\infty |Du|([0,T] \times \bb{T}^N \setminus K) \leq \|\phi\|_\infty \|{\bf \tilde{v}}\|_\infty\epsilon.
\end{equation}
We conclude that for some constant $C > 0$ (not depending on $\epsilon$),
\begin{equation}
\int_0^T \int_{\bb{T}^N} \phi(t,x)H(x,Du(t,x)) \leq C\epsilon + \int_0^T \int_{\bb{T}^N} \phi(t,x){\bf \tilde{v}}(t,x) \cdot Du(t,x),
\end{equation}
which is what we needed to show. The proof is complete.
\end{proof}

The following lemma is key: it allows us to characterize members of the relaxed set in terms of the dual variable $m$ from Problem \ref{pr:dual}.

\begin{lemma}  \label{lem:integration_by_parts}
Suppose $u \in BV \cap L^\infty$ satisfies (\ref{eq:inequality_in_measure}) for every continuous vector field ${\bf \tilde{v}} = {\bf \tilde{v}}(t,x) \in c(x,A)$. Then for any $m \in L^q$ satisfying the continuity equation $m_t + \mathrm{div}(m{\bf v}) = 0$ for some vector field $\bf v$ with ${\bf v}(t,x) \in c(x,A)$ a.e., we have
\begin{align} 
\int_{\bb{T}^N} u(0,x)m(0,x)dx &\leq \int_{\bb{T}^N} u(t,x)m(t,x)dx + \int_0^t \int_{\bb{T}^N} f(s,x)m(s,x)dx ds, \nonumber \\
\int_{\bb{T}^N} u(t,x)m(t,x)dx &\leq \int_{\bb{T}^N} u_T(x)m(T,x)dx + \int_t^T \int_{\bb{T}^N} f(s,x)m(s,x)dx ds, \label{eq:trace_bound}
\end{align} 
 for almost every $t \in (0,T)$.
\end{lemma}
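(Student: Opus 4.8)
The plan is to smooth the data in the right variables and pass to the limit, using Lemma \ref{lem:vector_field_condition} to feed the inequality into a duality pairing against $m$. First I would mollify $u$ in space and time: set $u_\varepsilon = \xi_\varepsilon * u$ where $\xi_\varepsilon$ is a standard kernel (extending $u$ appropriately near $t=0,T$, or working on a slightly shrunken interval and exhausting). Then $u_\varepsilon$ is smooth, and $\partial_t u_\varepsilon = \xi_\varepsilon * (u_t)$, $Du_\varepsilon = \xi_\varepsilon * (Du)$ as mollifications of the measures $\sigma_1\,d\mu$ and $\sigma_N\,d\mu$. The key point is that I do \emph{not} need $-\partial_t u_\varepsilon + H(x,Du_\varepsilon)\le f_\varepsilon$; instead I only need the pairing against $m$. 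Writing the continuity equation $m_t+\mathrm{div}(m\mathbf{v})=0$ tested against $u_\varepsilon$ (which is a legitimate test function since it is $C^1$, and $m\in L^q$, $\mathbf v\in L^\infty$), integration by parts over $[s_1,s_2]\times\bb{T}^N$ gives
\begin{equation*}
\int_{\bb{T}^N} u_\varepsilon(s_2)m(s_2) - u_\varepsilon(s_1)m(s_1)\,dx = \int_{s_1}^{s_2}\!\!\int_{\bb{T}^N} \big(\partial_t u_\varepsilon \, m + Du_\varepsilon\cdot m\mathbf v\big)\,dx\,dt.
\end{equation*}
Here I use that $m(t)$ has a well-defined continuous-in-time representative (as recalled after the definition of $\s{K}_1$, since $m\mathbf v\in L^1$), so the boundary terms make sense for every $s_1,s_2$.

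Next I would bound the right-hand side. The integrand is $m\big(\partial_t u_\varepsilon + \mathbf v\cdot Du_\varepsilon\big)$, and since $\mathbf v(t,x)\in c(x,A)$ a.e., I want to compare this with $m\big(\partial_t u_\varepsilon - H(x,-Du_\varepsilon)\big)\ge -m\,f$ plus an error. The cleanest route is: by Lemma \ref{lem:vector_field_condition} (the mollified form), $-\partial_t u_\varepsilon + \xi_\varepsilon*(H(\cdot,Du))\le f_\varepsilon$ pointwise, where $f_\varepsilon = \xi_\varepsilon * f \to f$ in $L^p$. Then I estimate $\mathbf v\cdot Du_\varepsilon \le H(x,Du_\varepsilon) \le \xi_\varepsilon*(H(\cdot,Du)) + \beta_\varepsilon$ with the commutator term $\beta_\varepsilon$ controlled — but as the introduction warns, this crude Lipschitz bound on $\beta_\varepsilon$ diverges. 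So instead I avoid $H(x,Du_\varepsilon)$ altogether: I pair \emph{the measure inequality directly}. Since $m\ge 0$ and $m\in L^q$, approximate $m$ by a sequence of nonnegative continuous functions $\phi_n \to m$ and use \eqref{eq:inequality_in_measure} with the test function $\phi_n$ and with a continuous $\tilde{\mathbf v}$ chosen (via the construction in Lemma \ref{lem:vector_field_condition}) to make $-\tilde{\mathbf v}\cdot Du$ within $\varepsilon$ of $H(x,Du)$ in the $|Du|$-sense. This gives $\int \phi_n(-u_t + H(x,Du)) \le \int f\phi_n$; but now I need to connect $\int \phi_n (-u_t + H(x,Du))$ back to the time-boundary quantities $\int u\,m(s_2) - \int u\,m(s_1)$, which requires the specific choice $\phi_n$ built from $m$ and $\mathbf v$ — namely I must test against $\psi = \phi\,m\mathbf v$ and $\phi\,m$ in a way that reassembles $\partial_t(um) + \mathrm{div}(um\mathbf v)$.

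The cleanest implementation, and the one I would carry out: take $\phi = \phi(t)$ a smooth nonnegative function of time only (to be sent to an indicator of $[s_1,s_2]$), smooth $u$ to $u_\varepsilon$, and compute $\frac{d}{dt}\int u_\varepsilon(t)m(t)\,dx = \int (\partial_t u_\varepsilon\, m + Du_\varepsilon \cdot m\mathbf v)\,dx$ as above. Now use that $m\mathbf v$, with $\mathbf v\in c(x,A)$, satisfies $Du_\varepsilon\cdot m\mathbf v \ge -m\,H(x,Du_\varepsilon)$ only up to the problematic commutator — \emph{so} the genuinely correct move is to first pass $\varepsilon\to 0$ in the pairing $\int_{s_1}^{s_2}\!\int Du_\varepsilon\cdot m\mathbf v$, which converges to $\int_{s_1}^{s_2}\!\int m\mathbf v\cdot \frac{Du}{|Du|}\,d|Du|$ (weak-$*$ convergence of $Du_\varepsilon \rightharpoonup Du$ against the $L^q$ function $m\mathbf v$ times a continuous cutoff — here I need $m\mathbf v$ to be \emph{continuous}, which it is not, so I additionally approximate $\mathbf v$ by continuous vector fields $\tilde{\mathbf v}_k\in c(x,A)$, possible by the convexity and the selection argument of Lemma \ref{lem:vector_field_condition}). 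Having reached $\int_{s_1}^{s_2}\!\int m\tilde{\mathbf v}_k\cdot Du$, apply \eqref{eq:inequality_in_measure} with $\phi = m\,\mathbf 1_{[s_1,s_2]}$ (approximated) to get $-\int_{s_1}^{s_2}\!\int (\partial_t u + \tilde{\mathbf v}_k\cdot Du)m \le \int_{s_1}^{s_2}\!\int fm$, and finally let $k\to\infty$ so $\tilde{\mathbf v}_k\cdot Du \to H(x,Du)$ in the appropriate sense. Reassembling, $\int u\,m(s_2) - \int u\,m(s_1) + \int_{s_1}^{s_2}\!\int fm\ge 0$; taking $s_1\to 0$, $s_2 = t$ (resp. $s_1 = t$, $s_2\to T$) along a sequence where the continuous representative of $\int u\,m(s)$ has a limit, and using the trace $u(T,\cdot)=u_T$, yields \eqref{eq:trace_bound} for a.e.\ $t$.

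The main obstacle is exactly the interchange of limits near the temporal boundary and the low regularity of $\mathbf v$: because $u$ is only $BV$ (no continuity) and $\mathbf v$ only $L^\infty$, I cannot directly evaluate $\int u(s)m(s)$ at all $s$, and I must be careful that the mollification $u_\varepsilon$ and the continuous approximations $\tilde{\mathbf v}_k$ commute with the time-boundary evaluation. The hypothesis $u\in L^\infty$ is what makes $\int u_\varepsilon(s)m(s)\,dx$ uniformly bounded and the limit $\varepsilon\to 0$ tractable by dominated convergence against the $L^1$ (in fact $L^q$) function $m(s)$; and the "a.e.\ $t$" in the conclusion is unavoidable precisely because $u$'s time-trace exists only for a.e.\ $t$. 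A secondary technical point is justifying that $\tilde{\mathbf v}_k\cdot Du \to H(x,Du)$ can be arranged \emph{globally in $t$} (not just for a fixed test $\phi$), which follows by applying the Lusin/partition-of-unity construction of Lemma \ref{lem:vector_field_condition} on all of $[0,T]\times\bb{T}^N$ at once with $\varepsilon = 1/k$.
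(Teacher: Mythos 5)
There is a genuine gap, and it sits exactly at the step you flag as ``the genuinely correct move.'' You mollify $u$ and then want to pass to the limit in the pairing $\int Du_\varepsilon\cdot m\mathbf{v}$, claiming convergence to $\int m\mathbf{v}\cdot\frac{Du}{|Du|}\,d|Du|$, and later you want to use $\phi= m\,\mathbf{1}_{[s_1,s_2]}$ (approximated by continuous $\phi_n\to m$ in $L^q$) as a test function in \eqref{eq:inequality_in_measure}. Neither step is justified at this regularity: $u_t$ and $Du$ are only Radon measures, and in this problem they genuinely have singular parts (the example in Section \ref{sec:loss_of_continuity} produces $Du$ concentrated on hypersurfaces), while $m$ is only in $L^q$. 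Weak-$*$ convergence $Du_\varepsilon\rightharpoonup Du$ is only tested against \emph{continuous} functions, so $\int Du_\varepsilon\cdot m\mathbf{v}$ need not converge to any pairing of $m\mathbf{v}$ with the measure $Du$ (which is not even well defined); and replacing $\mathbf{v}$ by continuous $\tilde{\mathbf{v}}_k$ does not help, because the test function $m$ itself is not continuous, and $L^q$-convergence $\phi_n\to m$ gives no control whatsoever on $\int\phi_n\,du_t$ or $\int\phi_n\tilde{\mathbf{v}}_k\cdot Du$ against a singular measure. So the chain ``mollify $u$, pass to the limit in the measure pairings, then plug in $m$ as test function'' cannot be closed.

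The paper's proof smooths the \emph{dual} pair instead of $u$, which is what makes every limit harmless. One sets $\mathbf{w}=m\mathbf{v}$, rescales time, $m_\lambda(t,x)=m(\lambda t,x)$, $\mathbf{w}_\lambda(t,x)=\lambda\mathbf{w}(\lambda t,x)$, and mollifies: $m_{\epsilon,\lambda}=\xi_\epsilon\ast m_\lambda$, $\mathbf{w}_{\epsilon,\lambda}=\xi_\epsilon\ast\mathbf{w}_\lambda$, which still solve the continuity equation. The point of the dilation is that for $\lambda\le 1-\epsilon L/c_0$, the Lipschitz dependence of $c(\cdot,A)$ together with $B_{c_0}(0)\subset c(x,A)$ and convexity give $\mathbf{w}_\lambda(s,y)\in m_\lambda(s,y)c(x,A)$ for $(s,y)$ in the $\epsilon$-ball around $(t,x)$, hence $\mathbf{w}_{\epsilon,\lambda}(t,x)\in m_{\epsilon,\lambda}(t,x)c(x,A)$. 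Thus $\tilde{\mathbf{v}}=\mathbf{w}_{\epsilon,\lambda}/m_{\epsilon,\lambda}$ is a \emph{continuous} admissible vector field and $m_{\epsilon,\lambda}$ a smooth positive admissible test function, so the hypothesis \eqref{eq:inequality_in_measure} applies verbatim to the pair $(\phi,\tilde{\mathbf{v}})=(m_{\epsilon,\lambda},\mathbf{w}_{\epsilon,\lambda}/m_{\epsilon,\lambda})$; integration by parts (legitimate since all derivatives land on the smooth $m_{\epsilon,\lambda}$, which solves its own continuity equation) yields \eqref{eq:trace_bound} with $m$ replaced by $m_{\epsilon,\lambda}$. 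Only then does one pass to the limit $\epsilon\to0$, $\lambda\to1$, and in the integrated inequality the only pairings are $u\in L^\infty$ against $m_{\epsilon,\lambda}(t)\to m(t)$ in $L^q(\bb{T}^N)$ (a.e.\ $t$) and $f\in L^p$ against $m_{\epsilon,\lambda}\to m$ in $L^q$ --- no measure is ever paired with a non-continuous function. If you want to salvage your plan, this is the reorganization you need: smooth $(m,\mathbf{w})$ with the time dilation so that admissibility of the mollified drift is preserved, rather than smoothing $u$ and fighting the commutator and the singular parts of $Du$.
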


\begin{proof}
Set ${\bf w} = m{\bf v}$. For $\lambda \in (0,1]$ set $m_\lambda(t,x) = m(\lambda t,x)$ and ${\bf w}_\lambda(t,x) = \lambda {\bf w}(\lambda t,x)$ for $t \in [0,T],x \in \bb{T}^N$. We extend outside of $[0,T]$ be setting ${\bf w}_\lambda(t,x) = 0$ and extending $m_\lambda$ by a constant function. Observe that
\begin{equation}
\partial_t m_\lambda + \mathrm{div}~ {\bf w}_\lambda = 0
\end{equation}
holds in distributions for all $\lambda \in (0,1]$.

Let $\xi = \xi(t,x)$ be a standard convolution kernel, i.e. $\xi > 0$ smooth with $\int \int \xi = 1$. 
Set $\xi_\epsilon(t,x) = \epsilon^{-N-1}\xi((t,x)/\epsilon)$. Define $m_{\epsilon,\lambda} := \xi_\epsilon \ast m_\lambda$ and ${\bf w}_{\epsilon,\lambda} := \xi_\epsilon \ast {\bf w}_\lambda$. 
Let $\epsilon > 0$ be small and consider $\lambda \leq 1 - \epsilon L/c_0$, where $L$ is the Lipschitz constant of $c(\cdot,A)$ and $c_0 > 0$ is the size of the ball such that $B_{c_0}(0) \subset c(x,A)$ for all $x \in \bb{T}^N$. 
Then almost everywhere we have
\begin{align*}
{\bf w}_{\lambda}(s,y) &\in \lambda m_{\lambda}(s,y) c(y,A)\\
&\subset \lambda m_{\lambda}(s,y) (c(x,A) + (L\epsilon/c_0) B_{c_0}(0))\\
&\subset m_{\lambda}(s,y) (\lambda c(x,A) + (1 - \lambda) B_{c_0}(0))\\
&\subset m_{\lambda}(s,y) c(x,A)
\end{align*}
by the convexity of $c(x,A)$. 
Thus
\begin{multline}
{\bf w}_{\epsilon,\lambda}(t,x) = \int \int_{B_\epsilon(t,x)} \xi_\epsilon(t-s,x-y) {\bf w}_\lambda(s,y) ds dy \\ \in c(x,A) \int \int_{B_\epsilon(t,x)} \xi_\epsilon(t-s,x-y) {m}_\lambda(s,y) ds dy = m_{\epsilon,\lambda}(t,x)c(x,A).
\end{multline}
This result, together with the fact that ${\bf w}_{\epsilon,\lambda}$ and $m_{\epsilon,\lambda}$ are smooth and $m_{\epsilon,\lambda} > 0$, means that Equation (\ref{eq:inequality_in_measure}) applies with ${\bf \tilde{v}} = {\bf w}_{\epsilon,\lambda}/m_{\epsilon,\lambda}$. By integration by parts, this implies (\ref{eq:trace_bound}) with $m$ replaced by $m_{\epsilon,\lambda}$. On the other hand, $m_{\epsilon,\lambda} \to m$ in $L^q((0,T) \times \bb{T}^N)$ as $\epsilon \to 0, \lambda \to 1$, and in particular $m_{\epsilon,\lambda}(t) \to m(t)$ in $L^q(\bb{T}^N)$ for almost every $t \in (0,T)$. So passing to the limit on $\epsilon$ and $\lambda$ we finish the proof.
\end{proof}

An immediate application of this lemma is the following proposition, proving that the relaxed problem is indeed equivalent (has the same infimum) as the original problem.
\begin{proposition} \label{prop:relaxed}
The relaxed problem \ref{pr:new_relaxed} is in duality with \ref{pr:dual}, i.e. \begin{equation}
\inf_{(u,f) \in \tilde{\s{K}}} \s{A}(u,f) = - \min_{(m,{\bf w}) \in \s{K}_1} \s{B}(m,{\bf w}).
\end{equation}
Equivalently, the infimum appearing in Problem \ref{pr:new_relaxed} is the same as that appearing in Problem \ref{pr:min_f} and Problem \ref{pr:smooth}.
\end{proposition}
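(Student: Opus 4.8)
The plan is to prove the two inequalities separately. The inequality $\inf_{(u,f)\in\tilde{\s{K}}}\s{A}(u,f)\le -\min_{(m,{\bf w})\in\s{K}_1}\s{B}(m,{\bf w})$ is the easy one and follows from the embedding of the smooth problem into the relaxed one: for any $u\in\s{K}_0$ the pair $\bigl(u,-u_t+H(x,Du)\bigr)$ lies in $\tilde{\s{K}}$, since such a $u$ is in $BV\cap L^\infty$, satisfies the terminal trace condition, and makes the subsolution inequality an equality; moreover $\s{A}\bigl(u,-u_t+H(x,Du)\bigr)=\s{A}(u)$. Hence $\inf_{\tilde{\s{K}}}\s{A}\le\inf_{\s{K}_0}\s{A}$, and by Theorem~\ref{thm:dual} the right-hand side equals $-\min_{\s{K}_1}\s{B}$.

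For the reverse inequality, fix an arbitrary $(u,f)\in\tilde{\s{K}}$ and let $(m,{\bf w})\in\s{K}_1$ be the minimizer of Problem~\ref{pr:dual} provided by Theorem~\ref{thm:dual}; recall that it satisfies $m\in L^q((0,T)\times\bb{T}^N)$, ${\bf w}(t,x)\in m(t,x)c(x,A)$ a.e., and $K^*(\cdot,\cdot,m)\in L^1$. Set ${\bf v}={\bf w}/m$ on $\{m>0\}$ and ${\bf v}=0$ elsewhere, so that ${\bf v}(t,x)\in c(x,A)$ a.e. and $m_t+\mathrm{div}(m{\bf v})=m_t+\mathrm{div}~{\bf w}=0$. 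By Lemma~\ref{lem:vector_field_condition}, membership $(u,f)\in\tilde{\s{K}}$ means $u$ satisfies (\ref{eq:inequality_in_measure}) for every continuous $\tilde{\bf v}(t,x)\in c(x,A)$, so Lemma~\ref{lem:integration_by_parts} applies to the pair $(m,{\bf v})$. Adding the two inequalities in (\ref{eq:trace_bound}) at a common time $t$ at which both hold, cancelling the term $\int u(t)m(t)$, and using $m(0,\cdot)=m_0$, I obtain
\[
\int_{\bb{T}^N} u(0,x)m_0(x)\,dx \le \int_{\bb{T}^N} u_T(x)m(T,x)\,dx + \int_0^T\int_{\bb{T}^N} f(s,x)m(s,x)\,dx\,ds .
\]

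To finish, I apply a Fenchel--Young estimate. All three functions $K(\cdot,\cdot,f)$, $fm$, and $K^*(\cdot,\cdot,m)$ are integrable on $(0,T)\times\bb{T}^N$ — using $f\in L^p$, $m\in L^q$, the growth bounds (\ref{eq:cost_growth}) and (\ref{eq:cost_growth2}), and the fact from Theorem~\ref{thm:dual} that $K^*(\cdot,\cdot,m)\in L^1$ — so the pointwise inequality $K(t,x,f)-fm\ge -K^*(t,x,m)$ may be integrated. Combining this with the trace bound just obtained gives
\[
\s{A}(u,f) \ge \int_0^T\int_{\bb{T}^N}\bigl(K(t,x,f)-fm\bigr)\,dx\,dt - \int_{\bb{T}^N} u_T(x)m(T,x)\,dx \ge -\s{B}(m,{\bf w}).
\]
Since $(u,f)\in\tilde{\s{K}}$ was arbitrary, $\inf_{\tilde{\s{K}}}\s{A}\ge -\s{B}(m,{\bf w})=-\min_{\s{K}_1}\s{B}$, which together with the first step yields the claimed duality; the identification with the infima of Problems~\ref{pr:min_f} and~\ref{pr:smooth} then follows from Theorem~\ref{thm:dual} and Proposition~\ref{prop:original}.

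The substantive work has already been carried out in Lemmas~\ref{lem:vector_field_condition} and~\ref{lem:integration_by_parts}, so the proposition itself is essentially an assembly argument; the point requiring care is to check that the regularity of the dual minimizer ($m\in L^q$ together with the inclusion ${\bf w}(t,x)\in m(t,x)c(x,A)$) is precisely what is needed to invoke Lemma~\ref{lem:integration_by_parts}, and that the condition $u\in L^\infty$ built into $\tilde{\s{K}}$ legitimizes the trace-level integration by parts there. I do not expect any genuine obstacle beyond verifying these hypotheses and the integrability conditions in the Fenchel--Young step.
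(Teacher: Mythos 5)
Your proposal is correct and follows essentially the same route as the paper: embed the smooth problem into $\tilde{\s{K}}$ for the easy inequality, then for the reverse one take the dual minimizer $(m,{\bf w})$ from Theorem \ref{thm:dual}, invoke Lemmas \ref{lem:vector_field_condition} and \ref{lem:integration_by_parts} to get the trace bound (\ref{eq:trace_bound}), and conclude with the Fenchel--Young inequality. Your write-up merely makes explicit two steps the paper leaves implicit (adding the two inequalities in (\ref{eq:trace_bound}) to cancel $\int u(t)m(t)$, and the integrability check for the Fenchel--Young step), which is fine.
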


\begin{proof}
Since for $u \in \s{K}_0$ we have that $(u,-u_t + H(x,Du)) \in \tilde{\s{K}}$, it follows that $\inf_{(u,f) \in \tilde{\s{K}}} \s{A}(u,f) \leq \inf_{u \in \s{K}_0} \s{A}(u)$. It therefore suffices to prove the other direction. Equivalently, let $(m,{\bf w})$ be a minimizer of the dual problem \ref{pr:dual}; it suffices to show that for $(u,f) \in \tilde{\s{K}}$, we have $\s{A}(u,f) \geq -\s{B}(m,{\bf w})$. This essentially follows from condition (\ref{eq:trace_bound}). Indeed, we have
\begin{align*}
\s{A}(u,f) &= \int_0^T \int_{\bb{T}^N} K(t,x,f(t,x))dx dt - \int_{\bb{T}^N} u(0,x)m_0(x)dx\\
&\geq \int_0^T \int_{\bb{T}^N} f(t,x)m(t,x) - K^*(t,x,m(t,x))dx dt - \int_{\bb{T}^N} u(0,x)m_0(x)dx\\
&\geq -\int_0^T \int_{\bb{T}^N} K^*(t,x,m(t,x))dx dt - \int_{\bb{T}^N} u_T(x)m(T,x)dx = - \s{B}(m,{\bf w}).
\end{align*}
 
\end{proof}

The goal now is to show that Problem \ref{pr:new_relaxed} has a minimizer. We face two main difficulties. The first is the loss of continuity in the variable $f$. For an arbitrary $f \in L^p$ the integration of $f$ along individual trajectories in the proof of Lemma \ref{lem:ibp_inequality} is not well-defined. Fortunately, we may still expect Equation (\ref{eq:trace_bound}) to hold for $f \in L^p$ and $m \in L^q$ by duality. However, the second major difficulty we find is that $u$ need not be continuous for $(u,f) \in \tilde{\s{K}}$. In the following subsection we address precisely this phenomenon of ``loss of continuity" in passing to the limit on solutions of the Hamilton-Jacobi equation. After this we will show that nevertheless, a minimizer exists for Problem \ref{pr:new_relaxed}.

\subsubsection{Loss of regularity}
\label{sec:loss_of_continuity}

For Hamiltonians with super-linear growth one obtains H\"{o}lder continuity for viscosity solutions of the Hamilton-Jacobi equation (\ref{eq:hjb}), whose modulus of continuity does not depend on the continuity of $f$ appearing on the right-hand side but only on its $L^p$ norm \cite{cardaliaguet12}. This can be seen by proving an ``improvement of oscillation" result (\cite[Lemma 3.3]{cardaliaguet12}). The basic idea is that on successively smaller neighborhoods of an arbitrary point in space-time, the difference between the supremum and the infimum of a viscosity solution shrinks in proportion to the size of the neighborhood, which implies continuity. Showing this improvement of oscillation relies on the comparison principle and the classical Lax-Oleinik formula, which in the case where the Hamiltonian has super-linear growth provides upper and lower bounds on the point-wise behavior of solutions in terms of the values on parabolic cylinders. To be explicit, this means that the solution of the PDE
\begin{equation}
v_t + H(Dv) = 0 ~~ \text{in}~~ [-T,0] \times B_R
\end{equation}
is given by
\begin{equation} \label{eq:lax_oleinik}
v(t,x) = \min\{v(s,y) + (t-s)L\left(\frac{x-y}{t-s}\right)\}
\end{equation}
where the minimum is taken over the cylinder
$$ ([-T,0] \times \partial B_R) \cup (\{-T\} \times B_R).$$
Here $L$ is the Legendre transform of $H$.
When $H$ has super-linear growth the Lax-Oleinik formula is highly relevant, since its Legendre transform is finite at each point. In the case of linear growth, for instance $H(Du) = |Du|$, the Legendre transform is simply an indicator function and therefore infinite outside a compact set. Indeed, in this case the minimum in formula \ref{eq:lax_oleinik} can be taken over a light cone, excluding those parts of the cylinder where $L$ is infinite.

For this reason, the improvement of oscillation argument will not work, and any degree of continuity of solutions to (\ref{eq:back_hj}) must correspond to the continuity of $f$. However, we can at least obtain the following result:
\begin{lemma} \label{lem:holder} Suppose $u$ is a continuous viscosity solution to the Hamilton-Jacobi equation
\begin{equation}
-u_t(t,x) + H(x,Du) = f(t,x)
\end{equation}
in a region $[0,T] \times U$ for some open set $U$ in Euclidean space.
We take the assumptions from Section \ref{sec:assumptions} to be given. Assume also that $f$ is continuous with $f \geq 0$. Then for any $0 \leq t < s \leq T$ and for any $0 < \beta < 1$, we have the following estimate: 
\begin{equation} \label{eq:holder_step1}
u(t,x) - u(s,y) \leq C (1 - \beta^2)^{-N/2p}\|f\|_p |t-s|^{\alpha}, ~~~ \forall |x-y| \leq \beta c_0(s-t),
\end{equation}
where
$\alpha = 1-(N+1)/p$
and the constant $C$ depends on $p,N$, and $c_0^{-1}$.
\end{lemma}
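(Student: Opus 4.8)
The plan is to use the representation formula (\ref{eq:value}) for the viscosity solution $u$, which expresses $u(t,x)$ as an infimum over admissible trajectories of the open-loop ODE (\ref{eq:open_loop1}), and to estimate the running cost along carefully chosen trajectories. Fix $0 \le t < s \le T$ and $x,y$ with $|x-y| \le \beta c_0(s-t)$. Using the dynamic programming principle, $u(t,x) - u(s,y)$ is controlled by choosing \emph{any} admissible control steering $x$ (at time $t$) to some point, then comparing with $u(s,\cdot)$. The key geometric fact is that because $c(z,A) \supset B_{c_0}(0)$ for all $z$, the point $x$ can reach \emph{any} point in the ball of radius $c_0(s-t)$ around $x$ by time $s$; in particular it can reach $y$, and more generally it can reach any point in a whole neighborhood of $y$ of radius $\sim (1-\beta)c_0(s-t)$ (or, better for the computation, any point in a ball whose radius is a fixed fraction of $c_0(s-t)$, using that $|x-y| \le \beta c_0(s-t) < c_0(s-t)$). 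So I would write
\begin{equation*}
u(t,x) - u(s,y) \le u(s, y') - u(s,y) + \int_t^s f(\tau, \gamma(\tau))\, d\tau
\end{equation*}
for a trajectory $\gamma$ with $\gamma(t) = x$, $\gamma(s) = y'$, and then I would \emph{average} this inequality over a suitable family of endpoints $y'$ (equivalently, average over a family of straight-line-type controls), so that the term $u(s,y') - u(s,y)$ gets absorbed or controlled — actually, since we only want an upper bound on $u(t,x)-u(s,y)$ and $u$ is a fixed continuous function, the cleaner route is to directly take $y' = y$ if $|x-y| \le c_0(s-t)$, giving $u(t,x) - u(s,y) \le \int_t^s f(\tau,\gamma(\tau))\,d\tau$ along \emph{some} trajectory from $x$ to $y$.

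The real content is then to bound $\int_t^s f(\tau,\gamma(\tau))\,d\tau$ by $C(1-\beta^2)^{-N/2p}\|f\|_p |t-s|^\alpha$, and for this a single trajectory is not enough: I would instead average over a family of trajectories. Concretely, for each $z$ in a ball $B_r(z_0)$ with $z_0$ the midpoint-type point and $r$ comparable to $(1-\beta)^{1/2} c_0(s-t)$ — chosen so that the straight segments from $x$ to $z$ and from $z$ to $y$ each have speed $\le c_0$ and hence are admissible — one builds a trajectory $\gamma_z$ from $x$ at time $t$ to $y$ at time $s$ passing through $z$ at the midpoint time $(t+s)/2$. Then $u(t,x) - u(s,y) \le \int_t^s f(\tau,\gamma_z(\tau))\,d\tau$ for every such $z$, so averaging in $z$ over $B_r(z_0)$ with normalized Lebesgue measure gives
\begin{equation*}
u(t,x) - u(s,y) \le \fint_{B_r(z_0)} \int_t^s f(\tau,\gamma_z(\tau))\,d\tau\, dz.
\end{equation*}
Now one performs the change of variables: on the first half $[t,(t+s)/2]$ the map $z \mapsto \gamma_z(\tau)$ is an affine dilation of $B_r(z_0)$ with Jacobian of order $((\tau - t)/(s-t))^{N}$ up to constants (and symmetrically on the second half, with $(s-\tau)$), so $\int_{B_r(z_0)} f(\tau,\gamma_z(\tau))\,dz \le (\text{Jacobian factor})^{-1} \int_{\mathbb{R}^N} f(\tau, w)\, dw$ over the relevant ball. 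Combining with Hölder's inequality in $w$ (the ball of integration has volume $\lesssim r^N$) and then integrating the resulting power of $(\tau - t)$ (resp. $(s-\tau)$) over the half-intervals — which converges precisely because $p > N+1$, producing the exponent $\alpha = 1 - (N+1)/p$ — yields the claimed bound. The factor $r^{-N} \sim ((1-\beta)c_0^2(s-t)^2)^{-N/2}$ combined with the $|t-s|$ powers from the integration and the $r^{N/q}$ from Hölder's inequality is what collapses to $(1-\beta^2)^{-N/2p}\|f\|_p|t-s|^\alpha$ after bookkeeping (note $1-\beta^2 = (1-\beta)(1+\beta)$, so up to a universal constant $(1-\beta)^{-N/2p}$ and $(1-\beta^2)^{-N/2p}$ are interchangeable).

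The main obstacle I anticipate is the careful construction of the trajectory family $\{\gamma_z\}$ together with the Jacobian computation for the change of variables $z \mapsto \gamma_z(\tau)$: one must verify that all these trajectories are genuinely admissible (velocity in $c(\cdot,A)$ at a.e.\ time — here the inclusion $B_{c_0}(0) \subset c(z,A)$ is exactly what's needed, provided the segment speeds stay below $c_0$, which forces the precise relationship between $r$, $\beta$, and $s-t$), and that the two "fan" maps on $[t,(t+s)/2]$ and $[(t+s)/2,s]$ have the stated Jacobians so that the coarea/change-of-variables step is legitimate, including near the degenerate endpoints $\tau = t$ and $\tau = s$ where the Jacobian vanishes — this is where the integrability threshold $p > N+1$ enters and must be checked to give a \emph{finite} integral with the exponent $\alpha$. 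Everything else (the dynamic programming principle, Hölder's inequality, the bounds on $H$) is standard and cited or proved earlier in the paper.
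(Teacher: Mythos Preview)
Your proposal is correct and is essentially the paper's proof. The paper parameterizes the family of piecewise-linear trajectories by the velocity offset $\sigma$ in the ball $R_\theta = B\bigl(0,\sqrt{c_0^2-|\theta|^2}\bigr)$ (with $\theta=(x-y)/(s-t)$) rather than by the midpoint $z$, but this is just an affine reparameterization of your family $\{\gamma_z\}$; the change of variables, H\"older step, and the appearance of $|R_\theta|^{-1/p}\sim (1-\beta^2)^{-N/(2p)}c_0^{-N/p}$ are exactly as you describe.
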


\begin{remark}
In fact, by the comparison principle, it is only necessary that $u$ be a subsolution to the HJ equation in Lemma \ref{lem:holder}.
\end{remark}

\begin{proof}
The main idea is to use trajectories of the ODE (\ref{eq:open_loop1}) as ``subcharacteristics," i.e. curves along which the solution is monotone (not necessarily constant). This hinges on the dynamic programming principle stated in Equation (\ref{eq:dpp}) combined with the assumption that the ball $B_{c_0}$ is contained in the set of admissible velocities $c(x,A)$ for all $x$.

Suppose $|x-y| \leq \beta c_0(s-t)$, where $0 < \beta < 1$. Let
\begin{equation}
\theta := \frac{x-y}{s-t}, ~~ R_\theta := B(0;\sqrt{c_0^2 - |\theta|^2}) \subset B(\theta;c_0) \cap B(-\theta;c_0).
\end{equation}
We note that $R_\theta$ is a nonempty region because $|\theta| < c_0$. Now for each $\sigma \in R_\theta$, define
\begin{equation}
\gamma_\sigma(\tau) = x + \theta(\tau-t) + \left\{\begin{array}{ccc}
\sigma(\tau-t) & \text{if} & t \leq \tau \leq \frac{t+s}{2}\\
\sigma(s-\tau) & \text{if} & \frac{t+s}{2} \leq \tau \leq s
\end{array} \right.
\end{equation}
which we note goes from $x$ at time $t$ to $y$ at time $s$. Moreover, we have that
\begin{equation}
|\dot{\gamma}_\sigma(\tau)| = |\theta \pm \sigma| \leq c_0,
\end{equation}
and therefore $\dot{\gamma}_\sigma(\tau) \in c(\gamma(\tau),A)$. In other words, $\gamma_\sigma$ is an admissible trajectory of the ODE (\ref{eq:open_loop1}).
Then Equation (\ref{eq:dpp}) applies to $\gamma_\sigma$ for all $\sigma \in R_\theta$, hence
\begin{equation} \label{eq:holder_step1-1}
u(t,x) - u(s,y) \leq \frac{1}{|R_\theta|}\int_{R_\theta}\int_t^s f(\tau,\gamma_\sigma(\tau))d\tau d\sigma.
\end{equation}
The integral on the right-hand side is estimated in two parts. We estimate the first part, then note that the second part is handled in the same way. The first part is
\begin{align*}
\int_{R_\theta}\int_t^{\frac{t+s}{2}} & f(\tau, x + (\sigma+\theta)(\tau - t))d\tau d\sigma\\
&= \int_t^{\frac{t+s}{2}}\int_{\tilde{R}_\theta} f(\tau,\eta)(\tau-t)^{-(N+1)} d\eta d\tau\\
&\leq \left(\int_t^{\frac{t+s}{2}}\int_{\tilde{R}_\theta} |f(\tau,\eta)|^p d\eta d\tau\right)^{1/p}\left(\int_s^{\frac{t+s}{2}}\int_{\tilde{R}_\theta} (\tau-t)^{-(N+1)p^*} d\eta d\tau\right)^{1/p^*}\\
&\leq \left(\frac{s-t}{2}\right)^{1/p}\|f\|_p \left(\int_0^{\frac{s-t}{2}}|R_\theta| \rho^{(1-p^*)(N+1)} d\rho \right)^{1/p^*}\\
&\leq C(p,N)|R_\theta|^{1/p^*}\|f\|_p (s-t)^{1-(N+1)/p},
\end{align*}
where we have used a series of change of variables and H\"{o}lder's inequality. (In particular, for each $\tau$ the region $\tilde{R}_\theta$ is obtained from $R_\theta$ by the transformation $\sigma \mapsto x + (\sigma + \theta)(\tau - t)$. Thus the volume scales by $(\tau -s)^{(N+1)}$.) We arrive at the estimate
\begin{equation}
u(t,x) - u(s,y) \leq C(p,N)|R_\theta|^{-1/p}\|f\|_p |s-t|^{1-(N+1)/p}.
\end{equation}
Then considering that
\begin{equation}
|R_\theta| = C_N(c_0^2 - |\theta|^2)^{N/2} \geq C_N(1-\beta^2)^{N/2}c_0^N,
\end{equation}
we obtain (\ref{eq:holder_step1}) as desired, where the constant $C(p,N)$ may have changed but remains dependent only on $p$ and $N$.

\end{proof}

Unfortunately, this lemma implies no more than right-hand continuity in time, and in practice we will use it simply to obtain an upper bound on $u$ which depends only on the $L^p$ norm of $f$. Indeed, since the estimate (\ref{eq:holder_step1}) is restricted to points in a light cone, it fails to imply any sort of continuity in space for the function $u(t,\cdot)$. What is happening is that in this case, where the Hamiltonian has linear growth rather than super-linear, the equation (\ref{eq:hjb}) has a more strictly hyperbolic structure, and it is impossible to get any sort of regularization of irregular data.

We illustrate this with the following counterexample, showing that a sequence of continuous functions which converge in $L^p$ (in this case $L^\infty$) to some discontinuous function $f$ induces a sequence of value functions which become discontinuous in the limit.
Let $f_\epsilon$ be a continuous function taking values in $[0,1]$ such that, for $(t,x) \in [0,1] \times [0,2]$ we have
\begin{equation}
f_\epsilon(t,x) = \left\{ \begin{array}{ccc} 1 & \text{if} & 1 - t \leq x \leq 1 + t \\ 0 & \text{if} & x \leq 1 - t - \epsilon ~\text{or}~ x \geq 1 + t + \epsilon \end{array} \right.
\end{equation}
Note that $f_\epsilon \to f_0 =: f$ as $\epsilon \to 0$. We can think of $f$ as an obstacle that one begins to construct at time zero and then grows out in both directions at speed one.

We will take $c \equiv 1$ and $u_T = u_1 \equiv 0$. Consider the value function $u_\epsilon$ given by (\ref{eq:value}) with $f$ replaced by $f_\epsilon$, that is,
\begin{equation}
u_\epsilon(t,x) = \inf\{\int_t^1 f(s,y(s))ds : y(t) = x, |\dot{y}| \leq 1 ~ \text{a.e.}\}.
\end{equation}
If $1-t \leq x \leq 1 + t$, then for any $y(\cdot)$ such that $y(t) = x$ and $|\dot{y}| \leq 1$, then $1-s \leq y(s) \leq 1+s$ for all $s \in [t,1]$ and therefore $f(s,y(s)) = 1$. On the other hand, if $x \leq 1 - t - \epsilon$ then take $y(s) = x + t -s$ to get a trajectory with $|\dot{y}| = 1$ and $y(s) \leq 1 - s - \epsilon$, and therefore $f(s,y(s)) = 0$ for all $s \in [t,1]$. Similarly, if $x \geq 1 + t + \epsilon$ then take $y(s) = x - t + s$ to get a trajectory with $|\dot{y}| = 1$ and $y(s) \geq 1 + s + \epsilon$, and therefore $f(s,y(s)) = 0$ for all $s \in [t,1]$. It follows that
\begin{equation}
u_\epsilon(t,x) = \left\{ \begin{array}{ccc} 1-t & \text{if} & 1 - t \leq x \leq 1 + t \\ 0 & \text{if} & x \leq 1 - t - \epsilon ~\text{or}~ x \geq 1 + t + \epsilon \end{array} \right.
\end{equation}
and thus $u_\epsilon$ converges pointwise to a function $u$ given by
\begin{equation}
u(t,x) = \left\{ \begin{array}{ccc} 1-t & \text{if} & 1 - t \leq x \leq 1 + t \\ 0 & \text{if} & x < 1 - t ~\text{or}~ x > 1 + t \end{array} \right. = (1-t)\chi_{|x-1| \leq t}(t,x)
\end{equation}
Observe that $u$ is not continuous. This ensures its derivative is not (Lebesgue) integrable. Indeed, one computes
\begin{equation}
u_x(t,\cdot) = (1-t)(\delta_{1-t} - \delta_{1+t})
\end{equation}
in distribution, so that $u_x(t)$ is a discrete measure.

On the other hand, from the point of view of controlling front propagation, the above example shows explicitly how the obstacle $f$ effectively ``blocks" the front in finite time (without adressing optimality). Indeed, if the sub-level set $\{u(t,\cdot) \leq 0\}$ is the set which is ``on fire" at time $t$, then in the above example this is precisely the set $[0,1-t) \cup (1+t,2]$, which is empty at time $t = 1$.

\subsubsection{Existence of minimizers of the relaxed problem}
\label{sec:existence_relaxed}

Let us now prove the existence of minimizers to the relaxed problem.
\begin{theorem} \label{thm:existence}
The Relaxed Problem \ref{pr:new_relaxed} has at least one solution $(u,f) \in \tilde{\s{K}}$, which also satisfies $f \geq 0$.
\end{theorem}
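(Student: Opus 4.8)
The plan is to run the direct method of the calculus of variations: choose a minimizing sequence, extract uniform $L^\infty$, $BV$, and $L^p$ bounds, pass to the limit, and then check that the limit lies in $\tilde{\s K}$ while $\s A$ is lower semicontinuous along the sequence. The one nonobvious choice is to build the minimizing sequence out of \emph{smooth} data. By Proposition~\ref{prop:relaxed}, $\inf_{\tilde{\s K}}\s A=\inf_f\s J(f)$, so I would take $f_n$ smooth with $f_n\ge 0$ (replacing $f$ by $f^+$ does not increase $\s J$, since $u$ increases pointwise when $f$ does and $K$ is monotone with $K(t,x,0)=0$, and smoothing perturbs $\s J$ arbitrarily little) and $\s J(f_n)\to\inf$, and let $u_n$ be the associated value function. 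Then $u_n$ is Lipschitz in space (couple trajectories via Gronwall, using that $c$ is Lipschitz and $u_T,f_n$ are Lipschitz) and in time (from $u_n(t,x)\le u_n(t',x)+\int_t^{t'}f_n(s,x)\,ds$ together with the spatial bound), hence $u_n\in W^{1,\infty}\subset BV$ and satisfies the PDE a.e.; thus $(u_n,f_n)\in\tilde{\s K}$ and $\s A(u_n,f_n)=\s J(f_n)$. The payoff of smooth data is that Lemma~\ref{lem:holder} applies: taking $y=x$, $s=T$, $\beta=\tfrac12$ in \eqref{eq:holder_step1} gives $u_n(t,x)\le u_T(x)+C\|f_n\|_pT^\alpha$, while $f_n\ge 0$ forces $u_n\ge\min_{\bb T^N}u_T$.

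The uniform estimates come next. From $\s A(u_n,f_n)$ bounded above, the coercivity $\int K(t,x,f_n)\ge\tfrac1C\|f_n\|_p^p-C$, and the bound $\int u_n(0)\,dm_0\le\|u_T\|_\infty\,m_0(\bb T^N)+\|m_0\|_\infty\|f_n\|_{L^1}$ — which follows from the dynamic programming principle applied to the constant trajectory $y\equiv x$, admissible since $0\in B_{c_0}(0)\subset c(x,A)$ — I deduce $\|f_n\|_p\le C$. Hence $\|u_n\|_\infty\le C$ by the previous paragraph, and $\int u_n(0)\,dm_0$ is bounded on both sides. To bound $\|u_n\|_{BV}$ I use the a.e.\ inequality $\partial_t u_n\ge H(x,Du_n)-f_n\ge -f_n$ (so $(\partial_t u_n)^-\le f_n$ and $\|(\partial_t u_n)^-\|_{L^1}\le C$), then $\int_{(0,T)\times\bb T^N}\partial_t u_n=\int_{\bb T^N}(u_T-u_n(0,\cdot))$ is bounded (whence $\|(\partial_t u_n)^+\|_{L^1}\le C$), and finally $c_0|Du_n|\le H(x,Du_n)\le\partial_t u_n+f_n$ gives $\|Du_n\|_{L^1}\le C$. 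By $BV$ compactness, along a subsequence $u_n\to u$ in $L^1$ and a.e.\ with $u\in BV\cap L^\infty$, $Du_n\overset{*}{\rightharpoonup}Du$ and $\partial_t u_n\overset{*}{\rightharpoonup}\partial_t u$ as measures, and $f_n\rightharpoonup f$ weakly in $L^p$ with $f\ge 0$.

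It remains to check $(u,f)\in\tilde{\s K}$ and lower semicontinuity of $\s A$ along the sequence. The trace identity $u(T,\cdot)=u_T$ survives the limit by continuity of the trace under a uniform $BV$ bound. For $-u_t+H(x,Du)\le f$ I invoke Lemma~\ref{lem:vector_field_condition}: for every continuous $\tilde{\bf v}(t,x)\in c(x,A)$ and continuous $\phi\ge 0$, inequality \eqref{eq:inequality_in_measure} holds for $(u_n,f_n)$, and it passes to the limit because $\phi\tilde{\bf v}$ is continuous and bounded (so $\iint\phi\tilde{\bf v}\cdot Du_n\to\iint\phi\tilde{\bf v}\cdot Du$, and likewise for $\partial_t u_n$) while $\phi\in L^q$ (so $\iint f_n\phi\to\iint f\phi$); this is exactly where linearizing $H(x,Du_n)$ via Lemma~\ref{lem:vector_field_condition} avoids passing a nonlinear functional of a weakly converging measure to the limit. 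For lsc, $f\mapsto\iint K(t,x,f)$ is convex and strongly continuous on $L^p$, hence weakly sequentially lsc, so $\iint K(t,x,f)\le\liminf_n\iint K(t,x,f_n)$; and I claim $\limsup_n\int u_n(0)\,dm_0\le\int u(0)\,dm_0$, which I would prove from $\int u_n(0)\,dm_0\le\int u_n(t,\cdot)\,dm_0+\|m_0\|_\infty t^{1/q}\|f_n\|_p$ (again DPP with $y\equiv x$), sending $n\to\infty$ along the a.e.\ $t$ for which $u_n(t,\cdot)\to u(t,\cdot)$ in $L^1$, and then $t\to 0$ using that $u\in BV\cap L^\infty$ has trace $u(0,\cdot)$. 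Combining, $\s A(u,f)\le\liminf_n\s A(u_n,f_n)=\inf_{\tilde{\s K}}\s A$, so $(u,f)$ is a minimizer, with $f\ge 0$.

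The step I expect to be the main obstacle is controlling the boundary term $\int u_n(0)\,dm_0$ in the limit: $L^1$-convergence of $u_n$ on the cylinder does not control the trace at $t=0$ (mass of $\partial_t u_n$ could in principle concentrate there), and the usual trace continuity only helps at $t=T$, where all $u_n$ already coincide with $u_T$. The resolution is the one-sided dynamic-programming inequality $u_n(0,x)\le u_n(t,x)+\int_0^t f_n(s,x)\,ds$, which yields the required upper semicontinuity of $\int u_n(0)\,dm_0$ without any trace continuity at $t=0$. A lesser technical point worth care is verifying that each $u_n$ is genuinely $BV$ (so that $(u_n,f_n)\in\tilde{\s K}$) and that the $L^\infty$ bound extracted from Lemma~\ref{lem:holder} is uniform in $n$.
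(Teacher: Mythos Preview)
Your approach is essentially the same as the paper's: run the direct method on a minimizing sequence built from smooth (Lipschitz) nonnegative $f_n$, extract uniform $L^\infty$, $L^p$, and $BV$ bounds, pass to the limit, and verify membership in $\tilde{\s K}$ via Lemma~\ref{lem:vector_field_condition}. Your a~priori estimates (via the DPP with the constant trajectory in place of the paper's direct use of Lemma~\ref{lem:holder} for the coercivity step) are equivalent variants. You are in fact more explicit than the paper on the lower semicontinuity of $-\int u_n(0)\,dm_0$: the paper dispatches the whole inequality with the phrase ``by convexity,'' which really only covers the $K$ term, whereas your DPP argument $u_n(0,x)\le u_n(t,x)+\int_0^t f_n(s,x)\,ds$ followed by $t\to 0$ along BV trace slices is the correct way to handle the boundary term.

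One genuine slip: your justification for $u(T,\cdot)=u_T$ (``continuity of the trace under a uniform BV bound'') is false as stated---traces are \emph{not} continuous under weak$^*$ BV convergence (take $u_n(t,x)=\min\{n(T-t),1\}$). The paper's argument instead uses the two-sided squeeze $u_T(x)-C(T-t)\le u_n(t,x)\le u_T(x)+C(T-t)^\alpha$, uniform in $n$: the upper bound is Lemma~\ref{lem:holder}, and the lower bound comes from comparison with the $f\equiv 0$ solution (equivalently, from $u_n(t,x)\ge \inf_{|y-x|\le c_1(T-t)}u_T(y)\ge u_T(x)-L_{u_T}c_1(T-t)$ using $f_n\ge 0$ and Lipschitz $u_T$). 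This squeeze is what forces the $L^1$-limit to have trace $u_T$ at $t=T$; your bound $u_n\ge\min u_T$ is too weak for this step, but the fix is immediate.
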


\begin{proof}

Let $f_n$ be a minimizing sequence of continuous functions for Problem \ref{pr:min_f}, and let $u_n$ be the unique viscosity solution to (\ref{eq:back_hj}) with $f$ replaced by $f_n$, so that in particular $u_n$ is given by the value function (\ref{eq:value}). We can assume without loss of generality that $f_n \geq 0$. Otherwise, let $\tilde{f}_n := \max\{0,f_n\}$ and let $\tilde{u}_n$ be the corresponding viscosity solution to (\ref{eq:back_hj}). Then by the comparison principle we have $\tilde{u}_n \geq u_n$ so that $-\int \tilde{u}_n(0)m_0 \leq -\int u_n(0)m_0$ while $|\tilde{f}_n| \leq |f_n|$ implies $K(t,x,\tilde{f}_n) \leq K(t,x,f_n)$ by the assumptions on $K$. It follows that $\s{J}(\tilde{f}_n) \leq \s{J}(f_n)$ and so $\tilde{f}_n$ is also a minimizing sequence.

In fact, without loss of generality, we can also assume that $f_n$ is Lipschitz. To see this will require a bit more explanation. Let $f$ be any continuous function on $[0,T] \times \bb{T}^N$ and let $u$ be the corresponding viscosity solution to (\ref{eq:back_hj}). Let $\xi \in C^\infty$ be a standard convolution kernel, with support in $\overline{B_1}$ and $\int \xi = 1$, and let $\xi_\epsilon(t,x) = \epsilon^{-N-1}\xi((t,x)/\epsilon)$ be the corresponding standard mollifiers. Since $f$ is continuous on a compact space, it is uniformly continuous, so let $\omega_f$ denote its modulus. Define $f_\epsilon := \xi_\epsilon \ast f$, and note that this is a smooth function (in particular, it is Lipschitz). Then $\|f_\epsilon - f\|_\infty \leq \omega_f(\epsilon)$ by definition of convolution. Denote by $u_\epsilon$ the viscosity solution of (\ref{eq:back_hj}) corresponding to $f_\epsilon$. By direct inspection of the formula (\ref{eq:value}), we find that
\begin{equation}
u_\epsilon(0,x) \geq -\omega_f(\epsilon)T + u(0,x).
\end{equation}
We can say more than this, but the information above suffices to conclude that $\limsup_{\epsilon \to 0} \s{J}(f_\epsilon) \leq \s{J}(f)$. It therefore suffices to minimize over smooth functions $f$. For our purposes it is enough that each function $f_n$ is Lipschitz, since then by standard results (see, for instance, \cite[Proposition 3.1]{bardi97}) we have that $u_n$ is Lipschitz. Thus, by the Rademacher differentiation theorem, $u_n$ is differentiable almost everywhere.

Now from Lemma \ref{lem:holder} we have that
\begin{equation} \label{eq:est_above}
u_n(t,x) \leq u_T(x) + C(T-t)^\alpha \|f_n\|_p,
\end{equation}
where $\alpha = 1 - (N+1)/p$.
From that fact that $u_n$ is a minimizing sequence, we have that
\begin{align*}
C_1 &\geq \int_0^T \int_{\bb{T}^N} K(t,x,f_n(t,x)) dx dt - \int_{\bb{T}^N} u_n(0,x) m_0(x)dx\\
&\geq \frac{1}{C}\|f_n\|_p^p - C - \|u_T\|_\infty \|m_0\|_1 - CT^\alpha \|f_n\|_p.
\end{align*}
It follows that $f_n$ is bounded in $L^p$ and that, from Equation (\ref{eq:est_above}), $u_n$ is bounded pointwise from above.

Moreover, we can get a lower point-wise bound for $u_n$ by setting a function $w$ to be the viscosity solution of (\ref{eq:back_hj}) with $f$ replaced by the zero function. Then $w$ inherits the Lipschitz regularity of $u_T$ and we have
\begin{equation}
\label{eq:est_below}
u_T(x) - C(T-t) \leq w(t,x) \leq u_n(t,x)
\end{equation}
by the comparison principle.

Based on equation (\ref{eq:back_hj}) and (\ref{eq:est_above}) and using the fact that $u_n$ is almost everywhere differentiable, we have
\begin{align*}
\int_0^T \int_{\bb{T}^N} H(x,Du_n(t,x)) dx dt &= \int_0^T\int_{\bb{T}^N} \partial_t u_n(t,x) + f_n(t,x) dxdt \\
&\leq \int_{\bb{T}^N}u_T(x) - u_n(0,x)dx + \|f_n\|_1\\
&\leq C(T^\alpha + 1)\|f_n\|_p.
\end{align*}
By Equation (\ref{eq:hamiltonian_bounds}), this implies that $Du_n$ and thus also $H(x,Du_n)$ are bounded in $L^1(0,T;L^1(\bb{T}^N))$. Likewise, $\partial_t u_n = H(x,Du_n) - f_n$ is bounded in $L^1(0,T;L^1(\bb{T}^N))$ and so the full gradient $(\partial_t u_n,Du_n)$ is uniformly bounded in $L^1$. Since $u_n$ is also uniformly bounded in $L^\infty$, we have that $u_n$ is uniformly bounded in $BV$. It follows that
\begin{itemize}
\item $u_n \rightharpoonup u$ in the weak$^*$ topology on $BV$,
\item $u_n \to u$ in $L^1$ and hence almost everywhere, and
\item $f_n \rightharpoonup f$ weakly in $L^p$ for some function $f$.
\end{itemize}
Note that $u \in L^\infty$ by virtue of the uniform bounds on $u_n$. We also have, by applying at the same time Equations (\ref{eq:est_above}) and (\ref{eq:est_below}) and passing to the limit, that $u_n(T,x) = u_T(x)$ for all $n$ and for all $x$ and therefore (by a standard argument) that $u(T,\cdot) = u_T$ in the sense of traces.

To see that $-u_t + H(x,Du) \leq f$ in the sense of measures, let us show that (\ref{eq:inequality_in_measure}) holds and apply Lemma \ref{lem:vector_field_condition}. (Alternatively, one could get a direct proof by carefully applying Reshetnyak's Theorem \cite{reshetnyak68}.) Let ${\bf \tilde{v}} = {\bf \tilde{v}}(t,x) \in c(x,A)$ be a continuous vector field and let $\phi : [0,T] \times \bb{T}^N \to [0,\infty)$ be a continuous function. Then
\begin{equation} \label{eq:inequality_in_measure2}
-\int_0^T \int_{\bb{T}^N} \phi(t,x)\partial_t u_n(t,x) + \phi(t,x){\bf \tilde{v}}(t,x) \cdot Du_n(t,x)dt dx \leq \int_0^T \int_{\bb{T}^N} f_n(t,x)\phi(t,x)dt dx
\end{equation}
for each $n$ directly from the Hamilton-Jacobi equation $-\partial_t u_n + H(x,Du_n) = f_n$. Since $u_n \rightharpoonup u$ in the weak$^*$ topology on $BV$ and $f_n \rightharpoonup f$ weakly in $L^p$, we deduce (\ref{eq:inequality_in_measure}) by passing to the limit in (\ref{eq:inequality_in_measure2}). 

We have thus shown that $(u,f) \in \s{K}$. We also have, by convexity, that
\begin{multline}
\int_0^T \int_{\bb{T}^N} K(t,x,f(t,x)) dx dt - \int_{\bb{T}^N} u(0,x) dm_0(x)\\ \leq \liminf_{n \to \infty} \int_0^T \int_{\bb{T}^N} K(t,x,f_n(t,x)) dx dt - \int_{\bb{T}^N} u_n(0,x) dm_0(x)
\end{multline}
and $(u,f)$ is therefore a minimizer. Finally, the condition $f \geq 0$ follows because, without loss of generality, $f_n \geq 0$ for all $n$.

\end{proof}

\section{Characterization of minimizers}

\label{sec:characterization}

In this section our goal is to characterize the minimizers of Problem \ref{pr:min_f}. 
Heuristically, one may use the Lagrange multiplier method for the constrained optimization problem to characterize the minimizers by the following system of PDEs.
\begin{equation} \label{eq:mfg}
\begin{array}{rcll}
-u_t + H(x,Du) &=& k(t,x,m(t,x))  & \text{(state equation)}\\
u(T,x) &=& u_T(x) & \\
m_t - \mathrm{div} \left(m\partial_p H(x,Du)\right) &=& 0 & \text{(adjoint equation)}\\
m(0,x) &=& m_0(x).
\end{array}
\end{equation}
System (\ref{eq:mfg}) is in fact a model for mean field games with local coupling \cite{cardaliaguet13}, see also \cite{lasry06,lasry06a,lasry07}.

The goal of this section is to show that minimizers of Problem \ref{pr:min_f} correspond to weak solutions of the PDE system (\ref{eq:mfg}). We first set about the task of defining weak solutions, and then we prove an existence and uniqueness theorem.

\subsection{Definition of weak solutions}
\label{sec:definition_weak}

In order to make sense of the PDE system (\ref{eq:mfg}), we first need to define weak solutions, taking into account that the existence of smooth, pointwise solutions may be unrealistic. In particular, the vector field defined by $\partial_p H(x,Du)$ may not be well-defined even for smooth functions $u$, owing to the lack of differentiability of $H$. Our definition of solution is inspired by the optimization problems \ref{pr:new_relaxed} and \ref{pr:dual}. In the next section, we should that weak solutions are essentially equivalent to minimizers of these problems.

\begin{definition} \label{def:weak}
A pair $(u,m) \in BV((0,T) \times \bb{T}^N) \times L^q((0,T) \times \bb{T}^N)$ is called a weak solution to the system (\ref{eq:mfg}) if it satisfies the following conditions.
\begin{enumerate}
\item $u$ satisfies  a first-order Hamilton-Jacobi equation
\begin{equation} \label{eq:hjb_weak}
-u_t + H(x,Du) = k(t,x,m)
\end{equation}
in the following weak sense:
\begin{align} 
\int_{\bb{T}^N} u(0,x)m_0(x)dx &= \int_{\bb{T}^N} u(t,x)m(t,x)dx + \int_0^t \int_{\bb{T}^N} k(s,x,m(s,x))m(s,x)dx ds, \nonumber \\
\int_{\bb{T}^N} u(t,x)m(t,x)dx &= \int_{\bb{T}^N} u_T(x)m(T,x)dx + \int_t^T \int_{\bb{T}^N} k(s,x,m(s,x))m(s,x)dx ds, \label{eq:ibp_weak}
\end{align}
for almost all $t \in [0,T]$. We also have ``$-u_t + H(x,Du) \leq k(t,x,m)$ in the sense of measures," in the sense of inequality (\ref{eq:inequality_in_measure}).

Moreover, $u(T,\cdot) = u_T$ in the sense of traces
\item $m$ satisfies the continuity equation
\begin{equation} \label{eq:continuity_weak}
m_t + \mathrm{div}~(m{\bf v}) = 0 ~~~\text{in}~~(0,T) \times \bb{T}^N, ~~~ m(0) = m_0
\end{equation}
in the sense of distributions, where ${\bf v} \in L^\infty((0,T) \times \bb{T}^N;\bb{R}^N)$ is a vector field such that ${\bf v}(t,x) \in c(x,A)$ a.e.
\end{enumerate}
\end{definition}
A few remarks on this solution: first, we mention that (\ref{eq:hjb_weak}) with its weak formulation (\ref{eq:ibp_weak}) corresponds to the state equation in (\ref{eq:mfg}). It is (unfortunately) natural that the equation not hold in an almost everywhere sense or even in a viscosity sense, due to the very low regularity of $u$ and $m$. On the other hand, Equation (\ref{eq:continuity_weak}) is the natural interpretation of the adjoint equation in (\ref{eq:mfg}), since $\bf v$ here is taken heuristically as the optimal feedback for the control problem \ref{eq:value}. This heuristic is justified by the result of Theorem \ref{thm:minimizers_weak} which implies that ${\bf v} \cdot Du = -H(x,Du)$ on the set $\{m > 0\}$, provided that the derivative of $u$ is Lebesgue integrable. In general, however, very little can be said to definitively characterize $\bf v$. Finally, we note that (\ref{eq:ibp_weak}) could be deduced from (\ref{eq:continuity_weak}) by integration by parts if $u$ and $m$ were smooth. In the weak setting, we will see that these equations are ultimately justified using Equation (\ref{eq:trace_bound}) established for minimizers of Problem \ref{pr:new_relaxed}.

\subsection{Existence and uniqueness of weak solutions}
\label{sec:existence_weak}

The main result of this section is the following.
\begin{theorem}[Existence and partial uniqueness of weak solutions] \label{thm:minimizers_weak}
(i) If $(m,{\bf w}) \in \s{K}_1$ is a minimizer of \ref{eq:dual} and $(u,f) \in \tilde{\s{K}}$ is a minimizer of \ref{pr:new_relaxed}, then $(u,m)$ is a weak solution of (\ref{eq:mfg}) and $f(t,x) = k(t,x,m(t,x))$ almost everywhere. Moreover, if $u \in W^{1,p}$, then the Hamilton-Jacobi equation
\begin{equation} \label{eq:pointwise_HJ}
-u_t - {\bf v} \cdot Du = f
\end{equation}
holds pointwise almost everywhere in $\{m > 0\}$, where $\bf v$ is a bounded vector field given by $m{\bf v} = {\bf w}$.

(ii) Conversely, if $(u,m)$ is a weak solution of (\ref{eq:mfg}), then there exist functions ${\bf w}, f$ such that $(u,f) \in \tilde{\s{K}}$ is a minimizer of \ref{pr:new_relaxed} and $(m,{\bf w}) \in \s{K}_1$ is a minimizer of \ref{pr:dual}.

(iii) If $(u,m)$ and $(u',m')$ are both weak solutions to (\ref{eq:back_hj}), then $m = m'$ almost everywhere while $u = u'$ almost everywhere in the set $\{m > 0\}$.
\end{theorem}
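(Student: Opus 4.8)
The plan is to read everything off the scalar equality $\s{A}(u,f) = -\s{B}(m,{\bf w})$, which holds because both pairs are optimal and $\inf_{\tilde{\s{K}}}\s{A} = -\min_{\s{K}_1}\s{B}$ by Proposition~\ref{prop:relaxed}. This equality forces each inequality in the chain $\s{A}(u,f) \ge \int_0^T\int_{\bb{T}^N}\big(f m - K^*(t,x,m)\big)\,dx\,dt - \int_{\bb{T}^N} u(0)m_0\,dx \ge -\s{B}(m,{\bf w})$ from the proof of Proposition~\ref{prop:relaxed} to be an equality. I would argue that equality at the first step is the Fenchel--Young identity $K(t,x,f)+K^*(t,x,m)=fm$ a.e., which yields $f(t,x)=\partial_m K^*(t,x,m)=k(t,x,m)$ a.e.; and that equality at the second step forces both inequalities of Lemma~\ref{lem:integration_by_parts} (applied to $u$ against the flow $(m,{\bf w})$) to be equalities for a.e.\ $t$, which is exactly (\ref{eq:ibp_weak}). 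The remaining requirements of Definition~\ref{def:weak} ($-u_t+H(x,Du)\le k(t,x,m)$ in the sense of measures, and $u(T,\cdot)=u_T$) are built into $\tilde{\s{K}}$, and the continuity equation (\ref{eq:continuity_weak}) with ${\bf v}={\bf w}/m$ on $\{m>0\}$ (and $0$ elsewhere, bounded since $c(x,A)$ is bounded) comes from $(m,{\bf w})\in\s{K}_1$; hence $(u,m)$ is a weak solution of (\ref{eq:mfg}). For the ``moreover'', when $u\in W^{1,p}$ with $p>N+1$ Morrey embedding makes $u$ continuous, and ${\bf v}(t,x)\in c(x,A)$ gives $-u_t-{\bf v}\cdot Du\le -u_t+H(x,Du)\le f$ a.e.; writing $\int_{\bb{T}^N}u_T m(T) - \int_{\bb{T}^N}u(0)m_0 = \int_0^T\int_{\bb{T}^N}m(u_t+{\bf v}\cdot Du)$ (integration by parts in time via the continuity equation, justified by mollifying $m$ as in the proof of Lemma~\ref{lem:integration_by_parts}) and using (\ref{eq:ibp_weak}) gives $\int_0^T\int_{\bb{T}^N}m\,(u_t+{\bf v}\cdot Du+f)\,dx\,dt=0$; since the integrand is $\ge0$ it vanishes a.e., which is (\ref{eq:pointwise_HJ}) on $\{m>0\}$.

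\textbf{Part (ii).} Given a weak solution $(u,m)$, I would take $f:=k(t,x,m)$ and ${\bf w}:=m{\bf v}$ with ${\bf v}$ the field in (\ref{eq:continuity_weak}). Then $f\in L^p$ by the growth bound (\ref{eq:cost_growth3}) together with $m\in L^q$ (using $(q-1)p=q$), so $(u,f)\in\tilde{\s{K}}$ (the measure inequality and the trace condition are part of Definition~\ref{def:weak}) and $(m,{\bf w})\in\s{K}_1$. Since $f=\partial_m K^*(t,x,m)$, the Fenchel--Young inequality is an equality, so $\int_0^T\int_{\bb{T}^N}\big(K(t,x,f)+K^*(t,x,m)\big)\,dx\,dt = \int_0^T\int_{\bb{T}^N}fm\,dx\,dt$; combining the two identities of (\ref{eq:ibp_weak}) gives $\int_{\bb{T}^N}u(0)m_0\,dx = \int_{\bb{T}^N}u_T m(T)\,dx + \int_0^T\int_{\bb{T}^N}fm\,dx\,dt$, whence $\s{A}(u,f)+\s{B}(m,{\bf w})=0$. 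By Proposition~\ref{prop:relaxed} this common value is $\inf\s{A}=-\min\s{B}$, so $(u,f)$ minimizes Problem~\ref{pr:new_relaxed} and $(m,{\bf w})$ minimizes Problem~\ref{pr:dual}.

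\textbf{Part (iii).} First I would establish $m=m'$: by part (ii) both $(m,{\bf w})$ and $(m',{\bf w}')$ minimize Problem~\ref{pr:dual}, and the minimizing density is unique by Theorem~\ref{thm:dual}. (Alternatively, a Lasry--Lions computation: apply Lemma~\ref{lem:integration_by_parts} to $u$ against the flow of $m'$ and to $u'$ against the flow of $m$, subtract the corresponding identities (\ref{eq:ibp_weak}), and add the two results to obtain $0\le -\int_0^T\int_{\bb{T}^N}\big(k(\cdot,m)-k(\cdot,m')\big)(m-m')$, which with the strict monotonicity of $k$ in its last variable forces $m=m'$.) Consequently $f:=k(t,x,m)$ is the same for both solutions. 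Now set $w:=\max(u,u')$. The crucial step is that $(w,f)\in\tilde{\s{K}}$: $w\in BV\cap L^\infty$ and $w(T,\cdot)=u_T$ are immediate, while $-w_t+H(x,Dw)\le f$ in the sense of measures holds because, rewriting this constraint through Lemma~\ref{lem:vector_field_condition} as the family of linear transport inequalities $-\partial_t v-\tilde{\bf v}\cdot Dv\le f$ over continuous $\tilde{\bf v}\in c(x,A)$, the pointwise maximum of two $BV$ subsolutions of a linear transport inequality with a common right-hand side is again a subsolution (a $BV$ chain-rule computation). Since $w\ge u$ and $m_0\ge0$, $\s{A}(w,f)\le\s{A}(u,f)=\inf\s{A}$, so $(w,f)$ is itself a minimizer of Problem~\ref{pr:new_relaxed}; applying part (i) with the dual minimizer $(m,{\bf w})$ shows $(w,m)$ is a weak solution. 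Subtracting the identity (\ref{eq:ibp_weak}) for $u$ from the same identity for $w$ gives $\int_{\bb{T}^N}(w-u)(t,x)m(t,x)\,dx=0$ for a.e.\ $t$; since $w-u\ge0$ and $m\ge0$, Fubini yields $w=u$ a.e.\ on $\{m>0\}$, and likewise $w=u'$ a.e.\ on $\{m>0\}$, hence $u=u'$ a.e.\ on $\{m>0\}$.

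\textbf{Main obstacle.} I expect the hard part to be the lattice property of $\tilde{\s{K}}$ used in part (iii)---that the pointwise maximum of two distributional (measure) subsolutions of the Hamilton--Jacobi inequality with the same right-hand side is again such a subsolution. Because $u,u'$ are only $BV$ and $H(x,\cdot)$ is merely convex (not smooth, not strictly convex), establishing this requires a careful decomposition of $D\max(u,u')$ into its absolutely continuous, jump and Cantor parts and an analysis of the one-sided (Rankine--Hugoniot type) conditions along the jump set. A minor technical caveat throughout is that Definition~\ref{def:weak} must be understood with $u\in L^\infty$, so that Lemma~\ref{lem:integration_by_parts}, and hence parts (i)--(ii), can be invoked.
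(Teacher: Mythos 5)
Your parts (i) and (ii) are essentially the paper's argument: part (i) is exactly the equality chain forced by $\s{A}(u,f)=-\s{B}(m,{\bf w})$, giving the Fenchel--Young identity $f=k(\cdot,\cdot,m)$ and turning (\ref{eq:trace_bound}) into (\ref{eq:ibp_weak}), with the same sign argument for (\ref{eq:pointwise_HJ}) when $u\in W^{1,p}$ (the Morrey remark is unnecessary but harmless). In part (ii) you take a small shortcut: you compute $\s{A}(u,f)+\s{B}(m,{\bf w})=0$ directly from Fenchel--Young plus (\ref{eq:ibp_weak}) and invoke Proposition \ref{prop:relaxed}, whereas the paper verifies optimality against arbitrary competitors using convexity of $K$ and $K^*$ together with (\ref{eq:trace_bound}); both are correct and rest on the same duality, so this is only a cosmetic difference.

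The genuine gap is in part (iii), and it is exactly the step you flag as the ``main obstacle'': that $w=\max(u,u')$ still satisfies $-w_t+H(x,Dw)\le f$ in the sense of measures. You assert this as ``a $BV$ chain-rule computation'' and sketch a route through the decomposition of $Dw$ into absolutely continuous, jump and Cantor parts with Rankine--Hugoniot conditions on the jump set, but you do not carry it out, and at $BV$ regularity this fine-structure analysis is delicate (the products $\tilde{\bf v}\cdot Du_i$ are measures, and identifying how the one-sided traces of $u$ and $u'$ interact along the jump set of $w$ is precisely the kind of argument the paper is at pains to avoid). The paper closes this step much more cheaply, using the linearity you yourself exploit via Lemma \ref{lem:vector_field_condition}: for each fixed continuous $\tilde{\bf v}(t,x)\in c(x,A)$, mollify $u_1,u_2$ and $f$ with $\xi_\epsilon$; the commutator error is bounded by $\|\phi\|_\infty\,\omega_{\tilde{\bf v}}(\epsilon)\,\|Du_i\|$, hence vanishes as $\epsilon\to0$; for the \emph{smooth} approximations the maximum $u^\epsilon=\max\{u_1^\epsilon,u_2^\epsilon\}$ trivially satisfies the transport inequality up to $o(1)$ (split the integral over $\{u_1^\epsilon>u_2^\epsilon\}$ and its complement, using bounded test functions by density); then rewrite in divergence form, use $u^\epsilon\to w$ in $L^1$, and recover (\ref{eq:inequality_in_measure}) for $w$ by density of smooth compactly supported $\phi\ge0$ among continuous nonnegative ones. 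So your skeleton for (iii) (uniqueness of $m$, maximality trick, then (\ref{eq:ibp_weak}) forcing $w=u=u'$ on $\{m>0\}$) coincides with the paper's, but the pivotal lemma is left unproved in your proposal, and the technique you propose for it is heavier and riskier than the mollification-plus-commutator argument the paper actually uses.
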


Before turning to the proof, we make a couple of remarks.
Concerning the condition (\ref{eq:pointwise_HJ}) almost everywhere on the set $\{m > 0\}$, we remark that in principle, this equation justifies the unrigorous statement ${\bf v} = \partial_pH(x,Du)$, but here we have to assume some integrability of the derivative of $u$. We have already seen an example in which this is not the case (Section \ref{sec:loss_of_continuity}). Thus in general the vector field $\bf v$ has a rather vague relationship with $u$, and we have so far not been able to prove a more precise result.

Concerning the uniqueness of solutions, the weak solution to the Hamilton-Jacobi equation is unique only on the set $\{m > 0\}$ ($m$ is unique). This is due to the very low regularity of the solutions. If we had continuity of solutions $u$ as well as integrability of the Hamiltonian term $H(x,Du)$, then it would be possible to obtain pointwise uniqueness by appealing to the comparison principle for viscosity solutions. Essentially, we would have $-u_t + H(x,Du) = 0$ in a viscosity sense on any open set in $\{m=0\}$, from which uniqueness follows. See the corresponding uniqueness argument in \cite{cardaliaguet13}.

Finally, we mention an important corollary of the theorem:
\begin{corollary}[Approximation of minimizers]
\label{cor:approximation}
If $(u,f) \in \tilde{\s{K}}$ is a minimizer of Problem \ref{pr:new_relaxed}, then there exist continuous (even Lipschitz) functions $(u_n,f_n)$ such
\begin{itemize}
\item $u_n$ is the unique viscosity solution to (\ref{eq:back_hj}) with $f$ replaced by $f_n$,
\item $u_n \to u$ almost everywhere on $\{m > 0\}$ and $f_n \rightharpoonup f$ weakly in $L^p((0,T) \times \bb{T}^N)$, where $m$ is the unique $L^q$ function such that $(m,{\bf w}) \in \s{K}_1$ is a minimizer of Problem \ref{pr:dual} for some ${\bf w}$.
\end{itemize}
\end{corollary}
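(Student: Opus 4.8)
The plan is to reuse, essentially verbatim, the minimizing sequence produced inside the proof of Theorem \ref{thm:existence}, and then to identify its limit with the prescribed minimizer $(u,f)$ using the partial uniqueness contained in Theorem \ref{thm:minimizers_weak}. I would begin by recalling that construction: there one exhibits Lipschitz functions $f_n \geq 0$, with $u_n$ the unique viscosity solution of (\ref{eq:back_hj}) with $f$ replaced by $f_n$ (equivalently the value function (\ref{eq:value}), which is Lipschitz since $f_n$ is), such that $f_n$ is bounded in $L^p$ and $u_n$ is bounded in $BV \cap L^\infty$; after passing to a subsequence one has $f_n \rightharpoonup \bar f$ weakly in $L^p$, $u_n \rightharpoonup \bar u$ weak-$*$ in $BV$ and $u_n \to \bar u$ in $L^1$, hence, after one further extraction, $u_n \to \bar u$ almost everywhere, with $(\bar u,\bar f) \in \tilde{\s{K}}$ a minimizer of Problem \ref{pr:new_relaxed}. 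These $(u_n,f_n)$ are the functions claimed by the corollary, so it remains only to show $\bar f = f$ almost everywhere and $\bar u = u$ almost everywhere on $\{m>0\}$.

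Next I would invoke Theorem \ref{thm:minimizers_weak}(i) twice, using the fact (Theorem \ref{thm:dual}) that the $L^q$-component $m$ of a minimizer $(m,{\bf w}) \in \s{K}_1$ of Problem \ref{pr:dual} is unique. Applied to the minimizer $(\bar u,\bar f)$ together with $(m,{\bf w})$, it gives that $(\bar u,m)$ is a weak solution of (\ref{eq:mfg}) and that $\bar f(t,x) = k(t,x,m(t,x))$ a.e.; applied to the given minimizer $(u,f)$ together with the same $m$, it gives that $(u,m)$ is a weak solution of (\ref{eq:mfg}) and that $f(t,x) = k(t,x,m(t,x))$ a.e. Comparing the two identities yields $f = \bar f$ almost everywhere, so that $f_n \rightharpoonup f$ weakly in $L^p$, which is the second assertion. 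For the first assertion I would then apply Theorem \ref{thm:minimizers_weak}(iii): since $(u,m)$ and $(\bar u,m)$ are both weak solutions of (\ref{eq:mfg}) with the same $m$, we get $u = \bar u$ a.e. on $\{m>0\}$, and combining with $u_n \to \bar u$ a.e. gives $u_n \to u$ a.e. on $\{m>0\}$. This completes the proof.

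The argument is short because the substantive work is already carried by Theorem \ref{thm:minimizers_weak}; the only point that needs genuine care is conceptual rather than computational. The minimizing sequence of the existence proof is a priori only known to converge to \emph{some} minimizer $\bar u$, which need not agree with the prescribed $u$ off the set $\{m>0\}$ (indeed $u$ is not unique there, by the discussion following Theorem \ref{thm:minimizers_weak}). What rescues the statement is that, although $u$ is only partially unique, the cost component of any minimizer of Problem \ref{pr:new_relaxed} is \emph{fully} determined by the relation $f = k(\cdot,\cdot,m)$ together with the uniqueness of $m$; hence the weak $L^p$-limit of the $f_n$ is forced to be $f$, and the associated value functions $u_n$ then converge to $u$ exactly on the set $\{m>0\}$ where $u$ is pinned down. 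I do not anticipate any real obstacle beyond making this identification precise.
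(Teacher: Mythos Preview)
Your proposal is correct and follows essentially the same approach as the paper: take the minimizing sequence from the proof of Theorem \ref{thm:existence}, pass to a limit minimizer $(\bar u,\bar f)\in\tilde{\s K}$, and then invoke Theorem \ref{thm:minimizers_weak} (together with the uniqueness of $m$ from Theorem \ref{thm:dual}) to conclude $\bar f=k(\cdot,\cdot,m)=f$ and $\bar u=u$ a.e.\ on $\{m>0\}$. The paper's proof is terser---it simply cites Theorem \ref{thm:minimizers_weak} without separating parts (i) and (iii)---but the logic is identical.
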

Note, however, that we do not have a method to explicitly construct approximations of solutions.

We now turn to the proof of Theorem \ref{thm:minimizers_weak}.

\begin{proof}[Proof of Theorem \ref{thm:minimizers_weak}]
(i) Suppose $(m,{\bf w}) \in \s{K}_1$ is a minimizer of \ref{eq:dual} and $(u,f) \in \tilde{\s{K}}$ is a minimizer of \ref{pr:new_relaxed}.
By Proposition \ref{prop:relaxed} and condition (\ref{eq:trace_bound}) we have
\begin{align}
0 &= \int_0^T \int_{\bb{T}^N} K(t,x,f(t,x)) + K^*(t,x,m(t,x)) dxdt + \int_{\bb{T}^N} u_T(x)m(T,x) - u(0,x)m_0(x)dx \nonumber \\
&\geq \int_0^T \int_{\bb{T}^N} f(t,x)m(t,x) dxdt + \int_{\bb{T}^N} u_T(x)m(T,x) - u(0,x)m_0(x)dx \geq 0. \label{eq:fm_equals}
\end{align}
It follows that all the inequalities above are equalities. We deduce that
\begin{equation}
K(t,x,f(t,x)) + K^*(t,x,m(t,x)) = f(t,x)m(t,x)
\end{equation}
almost everywhere. Therefore $f = k(t,x,m(t,x))$ almost everywhere by definition of Fenchel conjugate. Note that $m \geq 0$ and thus by the assumptions on $k$ we have $f \geq 0$ and $\{f > 0\} = \{m > 0\}$. Applying Equation (\ref{eq:fm_equals}) to Equation (\ref{eq:trace_bound}) yields (\ref{eq:ibp_weak}).

Next, from the fact that $(m,{\bf w}) \in \s{K}_1$ we have the characterization
\begin{align*}
m_t + \mathrm{div}~({\bf w}) &= 0,\\
m(T,\cdot) &= m_0(\cdot),\\
{\bf w}(t,x) &\in m(t,x)c(x,A).
\end{align*}
Letting ${\bf v} = {\bf w}/m$ on the set $m > 0$ and ${\bf v} = 0$ on the set $m = 0$, we see that $\bf v$ is a vector field in $L^\infty$ with ${\bf v}(t,x) \in c(x,A)$ almost everywhere and such that $m,{\bf v}$ satisfies the continuity equation (\ref{eq:continuity_weak}). We conclude that $(u,m)$ is a weak solution of (\ref{eq:mfg}).

In the case when $u \in W^{1,p}$, we can show in a straightforward manner that (\ref{eq:pointwise_HJ}) holds almost everywhere on $\{m > 0\}$. We appeal to the continuity equation and then to (\ref{eq:ibp_weak}):
\begin{equation}
\int_0^T \int_{\bb{T}^N} {\bf w} \cdot Du = \int_0^T \int_{\bb{T}^N} m(T)u_T - m_0u(0) - \int_0^T \int_{\bb{T}^N} mu_t = - \int_0^T \int_{\bb{T}^N} m(f+u_t).
\end{equation}
It follows that $m(f+u_t+{\bf v} \cdot Du) = 0$ almost everywhere, as desired.

Conversely, suppose $(u,m)$ is a weak solution of (\ref{eq:mfg}). We set ${\bf w} = m{\bf v}$ and $f = k(\cdot,\cdot,m)$. Since $m \in L^q$ it follows that $f \in L^p$ by the growth conditions on $k$ in (\ref{eq:cost_growth3}). It then follows from the definition of weak solution that $(u,f) \in \tilde{\s{K}}$; in particular, Equation (\ref{eq:trace_bound}) comes from Equation (\ref{eq:ibp_weak}). Likewise, since basic results on transport theory (c.f. \cite{ambrosio08}) imply that $m \geq 0$, the definition of weak solution gives $(m,{\bf w}) \in \s{K}_1$. We want to then show that $(u,f)$ solves (\ref{pr:new_relaxed}) and that $(m,{\bf w})$ solves (\ref{pr:dual}).

We begin with $(u,f)$. Take another pair $(u',f') \in \s{K}$. By convexity of $K$ in the third variable, we have
\begin{align*}
\s{A}(u',f') &= \int_0^T \int_{\bb{T}^N} K(t,x,f'(t,x)) dx dt - \int_{\bb{T}^N} u'(0) m_0 ~ dx\\
&\geq \int_0^T \int_{\bb{T}^N} K(t,x,f(t,x)) + \partial_f K(t,x,f)(f'-f) dx dt - \int_{\bb{T}^N} u'(0) m_0 ~ dx\\
&= \int_0^T \int_{\bb{T}^N} K(t,x,f(t,x)) + m(f'-f) dx dt - \int_{\bb{T}^N} u'(0) m_0 ~ dx
\end{align*}
using the fact that $f$ and $m$ are conjugate.
We have from (\ref{eq:hjb_weak}) and (\ref{eq:ibp_weak}) in the definition of weak solution that
\begin{equation}
-\int_0^T \int_{\bb{T}^N} fm = \int_{\bb{T}^N} m(T)u_T - m_0u(0).
\end{equation}
On the other hand, Equation (\ref{eq:trace_bound}) in the definition of $\tilde{\s{K}}$ implies
\begin{equation}
\int_0^T \int_{\bb{T}^N} f'm \geq \int_{\bb{T}^N} -m(T)u_T + m_0u'(0).
\end{equation}
Adding these together we have that
\begin{equation*}
\s{A}(u',f') \geq \int_0^T \int_{\bb{T}^N} K(t,x,f(t,x))  dx dt - \int_{\bb{T}^N} u(0) m_0 ~ dx = \s{A}(u,f),
\end{equation*}
and thus $(u,f)$ is a minimizer of the relaxed problem.

We now turn to $(m,{\bf w})$, for which the proof is similar.  Let $(m',{\bf w}')$ be any minimizer of $\s{B}$ in $\s{K}_1$. Then by the assumptions $K^*$ we have
\begin{align*}
\s{B}(m',{\bf w}') &= \int_{\bb{T}^N} u_T m'(T)dx + \int_0^T \int_{\bb{T}^N} K^*(t,x,m'(t,x)) dx dt\\
&\geq \int_{\bb{T}^N} u_Tm'(T)dx + \int_0^T \int_{\bb{T}^N} K^*(t,x,m(t,x)) + k(t,x,m)(m'-m) dx dt\\
&= \int_{\bb{T}^N} u_Tm'(T)dx + \int_0^T \int_{\bb{T}^N} K^*(t,x,m(t,x)) + f(m'-m) dx dt\\
&= \int_{\bb{T}^N} u_Tm'(T) + u_Tm(T) - m_0u(0)dx + \int_0^T \int_{\bb{T}^N} K^*(t,x,m(t,x)) + fm' dx dt.
\end{align*}
All that remains is to observe that
\begin{equation}
\int_0^T \int_{\bb{T}^N} fm' \geq \int_{\bb{T}^N} -m'(T)u_T + m_0u(0),
\end{equation}
which follows from Equation (\ref{eq:trace_bound}) and the fact that $(u,f)$ is an element of $\tilde{\s{K}}$. From this inequality we find
\begin{equation*}
\s{B}(m',{\bf w}') \geq \int_{\bb{T}^N} u_Tm(T)dx + \int_0^T \int_{\bb{T}^N} K(t,x,m(t,x)) dx dt = \s{B}(m,{\bf w}),
\end{equation*}
hence $(m,{\bf w})$ is a minimizer for the dual problem.

(iii) Suppose $(u_1,m_1)$ and $(u_2,m_2)$ are both weak solutions to (\ref{eq:back_hj}). Since $m_1$ and $m_2$ are both minimizers of Problem \ref{pr:dual} by Part (ii) and this problem is strictly convex in the variable $m$, it follows that $m_1 = m_2 =: m$ almost everywhere. We let $f = k(\cdot,\cdot,m)$ and note that by Part (ii) both $(u_1,f)$ and $(u_2,f)$ are minimizers of Problem \ref{pr:new_relaxed}. Let $u := \max\{u_1,u_2\}$. Below we will prove that $u$ satisfies (\ref{eq:inequality_in_measure}). This implies that $(u,f) \in \tilde{\s{K}}$, and moreover $(u,f)$ is also a minimizer of Problem \ref{pr:dual} since $- \int u(0)m_0 \leq - \int u_1(0)m_0$. Let us see that this implies $u_1 = u = u_2$ on $\{m > 0\}$. Indeed, since both $(u_1,f)$ and $(u,f)$ are minimizers, we have
\begin{equation}
\int_0^T \int_{\bb{T}^N} K(t,x,f(t,x)) dx dt - \int_{\bb{T}^N} u_1(0,x) m_0(x)dx = \int_0^T \int_{\bb{T}^N} K(t,x,f(t,x)) dx dt - \int_{\bb{T}^N} u(0,x) m_0(x)dx
\end{equation}
so that
\begin{equation}
\int_{\bb{T}^N} (u(0,x) - u_1(0,x)) m_0(x)dx = 0.
\end{equation}
Since the integrand here is non-negative, it follows that $u(0) = u_1(0)$ almost everywhere on $\{m_0 > 0\}$. By the same token, Equation (\ref{eq:ibp_weak}) implies that $u(t,x) = u_1(t,x)$ almost everywhere in $\{m(t,\cdot) > 0\}$, for almost every $t \in (0,T)$. We conclude that $u = u_1$ almost everywhere in the set $\{m > 0\}$. The same argument holds with $u_1$ replaced by $u_2$, so $u_1 = u_2$ almost everywhere in $\{m > 0\}$.

It remains only to show that $u$ is a subsolution of the Hamilton-Jacobi equation, that is to say it remains to show (\ref{eq:inequality_in_measure}) holds. A key point here is that $u \in BV$, because the function $\max\{\cdot,\cdot\}$ is Lipschitz continuous (see e.g. \cite{ambrosio90}); however, computing its distributional derivative in terms of $u_1$ and $u_2$ is a somewhat delicate matter. The strategy of the proof will be to use smooth approximations of $u_1$ and $u_2$, the maximum of which approaches $u$ in at least an $L^1$ sense. Then it will be easy to see that $u$ satisfies the desired inequality.

Let ${\bf \tilde{v}}$ be any continuous vector field with ${\bf \tilde{v}}(t,x) \in c(x,A)$. For $i = 1,2$ we have
\begin{equation}
-\partial_t u_i - {\bf \tilde{v}} \cdot Du_i \leq f
\end{equation}
in distribution.
Let $\xi_\epsilon \geq 0$ be the standard convolution kernel used throughout, let $u_i^\epsilon := \xi_\epsilon \ast u_i$ and $f^\epsilon := \xi_\epsilon \ast f$. For $\phi \geq 0$ a continuous function, we have
\begin{equation} \label{eq:uniqueness_convolution}
-\int_0^T \int_{\bb{T}^N} \phi(t,x)(\partial_t u_i^\epsilon(t,x) + {\bf \tilde{v}}(t,x) \cdot Du_i^\epsilon(t,x)) dt dx \leq \int_0^T \int_{\bb{T}^N} f^\epsilon(t,x)\phi(t,x)dt dx + I_\epsilon,
\end{equation}
where
\begin{equation}
I_\epsilon := \int_0^T \int_{\bb{T}^N} \int_0^T \int_{\bb{T}^N} \xi_\epsilon(t-s,x-y) ({\bf \tilde{v}}(t,x) - {\bf \tilde{v}}(s,y)) Du_i(s,y)\phi(t,x) dt dx.
\end{equation}
Recalling that $\xi_\epsilon$ is supported in $B_\epsilon(0)$ and $\int \int \xi_\epsilon = 1$, an application of Fubini shows that
\begin{equation}
|I_\epsilon| \leq \|\phi\|_\infty \omega(\epsilon) \|Du_i\|,
\end{equation}
where $\omega(\cdot)$ is the modulus of continuity of ${\bf \tilde{v}}$. In particular, $I_\epsilon \to 0$ as $\epsilon \to 0$.

By a simple density argument, (\ref{eq:uniqueness_convolution}) holds for $\phi \geq 0, \phi \in L^\infty$. In particular, one has
\begin{align}
\label{eq:uniqueness_1}
-\iint_{u_1^\epsilon > u_2^\epsilon} \phi(t,x)(\partial_t u_1^\epsilon(t,x) + {\bf \tilde{v}}(t,x) \cdot Du_1^\epsilon(t,x)) dt dx &\leq \iint_{u_1^\epsilon > u_2^\epsilon} f^\epsilon(t,x)\phi(t,x)dt dx + o(1),\\
\label{eq:uniqueness_2}
-\iint_{u_1^\epsilon \leq u_2^\epsilon} \phi(t,x)(\partial_t u_2^\epsilon(t,x) + {\bf \tilde{v}}(t,x) \cdot Du_2^\epsilon(t,x)) dt dx &\leq \iint_{u_1^\epsilon \leq u_2^\epsilon} f^\epsilon(t,x)\phi(t,x)dt dx + o(1)
\end{align} 
as $\epsilon \to 0$.

Let $u^\epsilon := \max\{u_1^\epsilon,u_2^\epsilon\}$. Then
\begin{equation}
(\partial_t u^\epsilon, Du^\epsilon) = \left\{ \begin{array}{ccc}
(\partial_t u_1^\epsilon, Du_1^\epsilon) & \text{on} & \{u_1^\epsilon > u_2^\epsilon\}\\
(\partial_t u_2^\epsilon, Du_2^\epsilon) & \text{on} & \{u_2^\epsilon > u_1^\epsilon\}\\
(\partial_t u_1^\epsilon, Du_1^\epsilon) = (\partial_t u_2^\epsilon, Du_2^\epsilon) & \text{in} & \mathrm{Int}(\{u_1^\epsilon = u_2^\epsilon\}).
\end{array} \right.
\end{equation}
So adding (\ref{eq:uniqueness_1}) and (\ref{eq:uniqueness_2}) yields
\begin{equation} \label{eq:uniqueness_3}
-\int_0^T \int_{\bb{T}^N} \phi(t,x)(\partial_t u^\epsilon(t,x) + {\bf \tilde{v}}(t,x) \cdot Du^\epsilon(t,x)) dt dx \leq \int_0^T \int_{\bb{T}^N} f^\epsilon(t,x)\phi(t,x)dt dx + o(1),
\end{equation}
for any bounded function $\phi \geq 0$. In particular,
\begin{equation} \label{eq:uniqueness_3}
\int_0^T \int_{\bb{T}^N} u^\epsilon(t,x)(\partial_t \phi(t,x) + \mathrm{div}({\bf \tilde{v}}(t,x) \phi(t,x))) dt dx \leq \int_0^T \int_{\bb{T}^N} f^\epsilon(t,x)\phi(t,x)dt dx + o(1),
\end{equation}
for any $\phi \geq 0$ smooth with compact support in $(0,T) \times \bb{T}^N$. Let $\epsilon \to 0$. Since $u^\epsilon \to u$ in $L^1$, it follows that
\begin{equation} \label{eq:uniqueness_4}
\int_0^T \int_{\bb{T}^N} u(t,x)(\partial_t \phi(t,x) + \mathrm{div}({\bf \tilde{v}}(t,x) \phi(t,x))) dt dx \leq \int_0^T \int_{\bb{T}^N} f(t,x)\phi(t,x)dt dx
\end{equation}
for $\phi \geq 0$ smooth with compact support in $(0,T) \times \bb{T}^N$. This can be rewritten as
\begin{equation} \label{eq:uniqueness_5}
-\int_0^T \int_{\bb{T}^N} \phi(t,x)(\partial_t u(t,x) + {\bf \tilde{v}}(t,x) \cdot Du(t,x)) \leq \int_0^T \int_{\bb{T}^N} f(t,x)\phi(t,x)dt dx,
\end{equation}
which holds by density for all continuous functions $\phi \geq 0$. Equation (\ref{eq:uniqueness_5}) is precisely (\ref{eq:inequality_in_measure}), which is what we needed to show.

This completes the proof.

\end{proof}

\begin{proof}[Proof of Corollary \ref{cor:approximation}]
Suppose $(u,f) \in \tilde{\s{K}}$ is a minimizer of Problem \ref{pr:new_relaxed}. Let $f_n$ be a sequence of minimizers of Problem \ref{pr:min_f}. By Propositions \ref{prop:original} and \ref{prop:relaxed} we know that $\s{J}(f_n) \to \s{A}(u,f)$. Moreover, by the proof of Theorem \ref{thm:existence} we see that without loss of generality, $f_n$ is non-negative and Lipschitz. Let $u_n$ be the unique viscosity solution to (\ref{eq:back_hj}) with $f$ replaced by $f_n$. Again, by the proof of Theorem \ref{thm:existence} we have that $u_n \to \tilde{u}$ almost everywhere and $f_n \rightharpoonup \tilde{f}$ weakly in $L^p$ for a pair $(\tilde{u},\tilde{f}) \in \tilde{\s{K}}$ which is a minimizer for Problem \ref{pr:new_relaxed}. By Theorem \ref{thm:minimizers_weak}, $u = \tilde{u}$ on $\{m > 0\}$ and $f = k(\cdot,\cdot,m) = \tilde{f}$, where $m$ is the unique $L^q$ function such that $(m,{\bf w}) \in \s{K}_1$ is a minimizer of Problem \ref{pr:dual} for some $\bf w$. This completes the proof.
\end{proof}

\section{Applications}

\label{sec:conclusion}

To conclude this article, we examine some possible applications of the main result.

{\em Front propagation.} Let us return to the motivating application. Recall that $\tilde{\Omega}_t \subset \bb{R}^N$ is supposed to be an evolving set given by
\begin{equation} \label{eq:moving_front}
\tilde{\Omega}_t = \{y(t) : u_T(y(T)) + \int_t^T f(s,y(s)) \leq 0, ~ y ~\text{solves}~ (\ref{eq:open_loop}), ~ \alpha \in L^\infty(t,T;A), y(T) \in \Omega_T\}
\end{equation}
which is precisely the sub-level set $\{u(t,x) \leq 0\}$ where $u$ is given by (\ref{eq:value}). There are several interpretations of this one might consider. As a first example, assume that $u_T \geq 0$ and that $f \geq 0$, so that $\Omega_T$ is precisely the level set where $u_T = 0$ and in formula (\ref{eq:moving_front}) the inequality can be replaced by equality. Define an evolving obstacle $K_\tau, \tau \in [0,T]$ by
\begin{equation}
K_\tau := \{x : f(\tau,x) > 0\}.
\end{equation}
Assuming $f$ is continuous, it follows that $K_\tau$ is an evolving open set in space. Moreover, we have the following characterization \cite{altarovici12,bokanowski10}:
\begin{equation}
\tilde{\Omega}_t = \{y(t) : y ~\text{solves}~ (\ref{eq:open_loop}), ~ \alpha \in L^\infty(t,T;A), y(T) \in \Omega_T, y(\tau) \notin K_\tau ~ \forall ~ \tau \in [t,T]\}.
\end{equation}
Compare with the ``dynamic blocking" strategy characterized by Equation (\ref{eq:dynamic_blocking}), cf. \cite{bressan07}. Here $K_\tau$ represents an evolving obstacle which in this case completely blocks the progress of the front. Note that $K_\tau$ is not a curve, but a region in space, and moreover we assume that it is possible (if desired) to remove parts of the obstacle once it has been constructed.

We consider Problem \ref{pr:min_f} with $m_0$ given by, say,
\begin{equation}
m_0(x) = \left\{\begin{array}{ccc} 1 & \text{if} & x \in \s{R} \\ 0 & \text{if} & x \notin \s{R} \end{array} \right.
\end{equation}
where $\s{R}$ is a given region in space. Then the quantity $\s{J}(f)$ to be minimized is simply
\begin{equation}
\s{J}(f) = \int_0^T \int_{\bb{T}^N} K(t,x,f(t,x)) dx dt - \int_{\s{R}} u(0,x) dx.
\end{equation}
Note that $\int_{\s{R}} u(0,x) dx$ is essentially an expected value of $u(0,x)$ on the region $\s{R}$. If it is positive, then we can expect on average that $x \in \s{R}$ is not contained in $\tilde{\Omega}_0 = \{u(0,\cdot) \leq 0\}$. The cost $K$ with respect to $f$ models in some sense the cost of constructing the obstacle.

We summarize the implications of Theorem \ref{thm:minimizers_weak}. Assume $K$ satisfies the hypotheses prescribed in Section \ref{sec:assumptions}. Then the infimum of $\s{J}(f)$ is achieved by a unique function $f \in L^p$ and a corresponding $u \in BV$ for which there is uniqueness almost everywhere on the set $\{f > 0\}$, which is equal to the set $\{m > 0\}$ where $m = m(t,x)$ is the evolution of $m_0(x)$ corresponding to an {\em optimal transport} of the set $\s{R}$ (in the sense that $m$ solves the dual problem \ref{pr:dual}), where the vector field causing the transport corresponds in a very weak sense to the normalized gradient of $u$.

There are a few weaknesses to this model of front propagation with obstacles. In the first place, we do not obtain a point-wise result on $u(0,\cdot)$, so it remains theoretically possible that much of the dangerous region is still reached by the front. At least we are guaranteed that not all of it is reached. In the second place, the cost of constructing an obstacle is modeled here by a cost which is bounded below and above by the $L^p$ norm of $f$, whereas the obstacle itself is said to correspond only to the set $\{f > 0\}$. This means that in some sense it costs more to build up the obstacle higher at a given point, even though any positive height does the exact same thing to the front (namely stop the front). A more natural cost functional might be given by a cost distribution over the area where the obstacle is constructed, i.e. a functional of the form
\begin{equation}
\int_0^T \int_{K_\tau} \s{C}(t,x)dx dt.
\end{equation}
Such a cost functional would not satisfy the hypotheses of Section \ref{sec:assumptions}.

Let us consider another application.

{\em Mean field games.} Consider the value function $u$ given by (\ref{eq:value}). In terms of game theory, $u(0,x)$ can be thought of as the minimum price paid by a player who starts at position $x$, can control his velocity by selecting $\alpha(t) \in A$ at time $t$, and must pay a final cost determined by $u_T$ and a running cost determined by $f$. Suppose the initial (continuum) distribution of players is given by $m_0(x)$. For any average player seeking to maximize his payoff, or equivalently minimize his cost, the optimal expected payoff will be given by $-\int u(0,x)m_0(x)dx$. On the other hand, $\int \int K(t,x,f)$ can be thought of as the average cost to the other players, or by symmetry the average benefit to a given player, of imposing a running cost on a player's strategy. If the game is at a Nash equilibrium, then no single player should gain an advantage in changing strategy. Heuristically, this means both the average payoff on the one hand and the average external benefit as a result of competition should be together minimized. Thus, the minimization problem \ref{pr:min_f} corresponds to Nash equilibrium. If $m(t,\cdot)$ is the Nash equilibrium distribution of players at time $t$, then it evolves according to the continuity equation (\ref{eq:continuity_weak}) where $\bf v$ corresponds heuristically to the optimal control $\bar{\alpha}$, i.e. ${\bf v}(t,x) = c(x,\bar{\alpha}(t))$.

It would be interesting to apply the results to differential games with finitely many players. Here we give a sketch of such an application. Define a cost functional on a family of controls $\alpha_1,\ldots,\alpha_n$, corresponding to $n$ players, given by
\begin{equation}
J_i^n(\alpha_1,\ldots,\alpha_n) = \int_0^T \tilde{k}_n\left(s,y_x^{\alpha_i}(s),\frac{1}{N-1} \sum_{j\neq i} \delta_{y_x^{\alpha_i}(s)} \right) ds + u_T(y_x^{\alpha_i}(T)),
\end{equation}
where $y_x^{\alpha_i}$ solves (\ref{eq:open_loop1}) with $t = 0$ and control $\alpha_i$. Here $\tilde{k}_n$ is some smooth approximation of $k$. Suppose now that player $i$ starts at position $x_0^i$ which is a $\bb{T}^N$-valued random variable on a probability space $\Omega$. Each player then has a strategy $\beta^i : \Omega \times \bb{T}^N \times \s{A}^{n-1} \to \s{A}$ where $\s{A} := L^\infty(0,T;A)$ is the space of admissible controls; if $\alpha_i : \Omega^n \times (\bb{T}^N)^n \to \s{A}$, $i=1,\ldots,n$ are the controls (randomly) chosen by players 1 through $n$, then the strategies are determined by
\begin{equation}
\beta^i(\omega^i,x_0^i,(\alpha^j(\omega,x_0))_{j\neq i}) = \alpha^i(\omega,x_0)
\end{equation}
where $x_0 = (x_0^1,\ldots,x_0^n)$ and $\omega = (\omega^1,\ldots,\omega^n)$. Then the cost to a player $i$ of a family of strategies $(\beta^1,\ldots,\beta^n)$ is given by
\begin{equation}
{\bf J}_i^n(\beta^1,\ldots,\beta^n) = \int_{\Omega^n \times (\bb{T}^N)^n} J_i^n(\alpha^1(\omega,x),\ldots,\alpha^n(\omega,x))\Pi_{j=1}^N\bb{P}(d\omega_j)m_p(dx_0^j).
\end{equation}
A {\em Nash equilibrium} for this differential game is a strategy  $(\bar{\beta}^1,\ldots,\bar{\beta}^n)$ such that
\begin{equation}
{\bf J}_i^n(\beta^i,(\bar{\beta}^j)_{j\neq i}) \geq {\bf J}_i^n(\bar{\beta}^1,\ldots,\bar{\beta}^n)
\end{equation}
for any strategy $\beta^i$, for any $i = 1,\ldots,n$. We conjecture that an {\em approximate} Nash equilibrium is given by an {\em open loop} strategy (i.e. a strategy depending only on $\omega$ and $x_0$) as follows. Let $(m,m{\bf v})$ be a minimizer of Problem \ref{pr:dual}. Then $m$ induces a measure $\eta$ on continuous trajectories in $\Gamma := C([0,T];\bb{T}^N)$ such that $e_t \# \eta = m(t)$, where $e_t : \Gamma \to \bb{T}^N$ is the evaluation map $e_t(\gamma) = \gamma(t)$. By disintegration there exists a Borel measurable family of probabilities $(\eta_x)_{x \in \bb{T}^N}$ on $\Gamma$ such that $\eta(d\gamma) = \int_{\bb{T}^N}\eta_x(d\gamma)m_0(dx)$, where for $m_0$-a.e. $x \in \bb{T}^N$ we hae that $\eta_x$-almost every trajectory $\gamma$ starts at $x$. Using the Blackwell-Dubins Theorem \cite{blackwell83}, we can represent $(\eta_x)_{x \in \bb{T}^N}$ by a single measurable map $\bar{\beta}:\Omega \times \bb{T}^N \to \Gamma$ satisfying
\begin{equation}
\int_\Gamma G(\gamma)d\eta(\gamma) = \int_{\Omega \times \bb{T}^N} G(\bar{\beta}(\omega,x))d\bb{P}(\omega)m_0(dx).
\end{equation}
\begin{conjecture}
Under certain assumptions on $k$, the open loop strategy $\bar{\beta}$ is an approximate Nash equilibrium for the differential game in the sense that, for $\epsilon > 0$, we have for $n$ large enough
\begin{equation}
{\bf J}_i^n(\beta^i,(\bar{\beta})_{j\neq i}) \geq {\bf J}_i^n(\bar{\beta},(\bar{\beta})_{j\neq i}) - \epsilon
\end{equation}
for all strategies $\beta^i$ of player $i$, for any $i = 1,\ldots,n$.
\end{conjecture}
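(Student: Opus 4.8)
The plan is to run the classical passage from a mean field game solution to an approximate Nash equilibrium for the $n$-player game: freeze the population flow, recognise the resulting single-player problem as the optimal control problem of which $u$ is the (weak) value function, and show that the open-loop strategy $\bar\beta$ attains this value asymptotically. Two ingredients are needed, a propagation-of-chaos estimate and a comparison between the $n$-player cost and the ``frozen flow'' cost built from $k(s,x,m(s,x))$. First I would fix the approximating running cost by choosing mollifiers $\rho_{\delta_n}$ on $\bb{T}^N$ with $\delta_n \to 0$ at a suitably slow rate and setting $\tilde k_n(s,x,\mu) := k(s,x,(\rho_{\delta_n}\ast\mu)(x))$; by the growth bounds (\ref{eq:cost_growth3}) this is a legitimate smooth approximation of $k$, with the property that $\rho_{\delta_n}\ast\mu_n \to m(s,\cdot)$ in $L^q(\bb{T}^N)$ whenever $\mu_n \rightharpoonup m(s)$ weakly. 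Since $\bar\beta$ depends only on $(\omega,x_0)$, when every player $j \neq i$ uses it their path-valued random variables $Y^j = \bar\beta(\omega^j,x_0^j)$ are i.i.d.\ with law $\eta$ on $\Gamma$, independent of whatever player $i$ does; a Glivenko--Cantelli argument on path space gives $\frac{1}{n-1}\sum_{j \neq i}\delta_{Y^j} \to \eta$ weakly, hence $\mu_n^{-i}(s) := \frac{1}{n-1}\sum_{j \neq i}\delta_{Y^j(s)} \to m(s)$ weakly, almost surely and for every $s$.

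For the upper bound on ${\bf J}_i^n(\bar\beta,\dots,\bar\beta)$, player $i$ also follows $Y^i \sim \eta$, independent of the others, and crucially its time-$s$ marginal $e_s \# \eta = m(s)$ is absolutely continuous. Conditioning on the others and integrating out $Y^i$, $\mathbb{E}[\tilde k_n(s,Y^i(s),\mu_n^{-i}(s))] = \mathbb{E}\big[\int_{\bb{T}^N} k(s,x,(\rho_{\delta_n}\ast\mu_n^{-i}(s))(x))\, m(s,x)\,dx\big]$, which converges to $\int_{\bb{T}^N} k(s,x,m(s,x))m(s,x)\,dx$ by the $L^q$ convergence of $\rho_{\delta_n}\ast\mu_n^{-i}(s)$ together with the continuity and growth of $k$ (a generalised dominated convergence using the conjugacy of $p$ and $q$, then dominated convergence in $s$ and $\omega$). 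Adding the terminal cost, using $e_T\#\eta = m(T)$, and invoking the weak Hamilton--Jacobi identities (\ref{eq:ibp_weak}) of Definition \ref{def:weak} (which hold for the minimizing pair by Theorem \ref{thm:minimizers_weak}), one gets $\lim_n {\bf J}_i^n(\bar\beta,\dots,\bar\beta) = \int_{\bb{T}^N} u_T(x)m(T,x)\,dx + \int_0^T\int_{\bb{T}^N} k(s,x,m(s,x))m(s,x)\,dx\,ds = \int_{\bb{T}^N} u(0,x)\,m_0(dx)$.

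For the lower bound, fix an arbitrary deviation $\beta^i$ producing an admissible trajectory $y^i(\cdot)$ for player $i$ (random, but its distribution is immaterial since the other trajectories are i.i.d.\ $\eta$ regardless of it). One wants $\mathbb{E}\big[\int_0^T \tilde k_n(s,y^i(s),\mu_n^{-i}(s))\,ds + u_T(y^i(T))\big] \geq \int_{\bb{T}^N} u(0,x)\,m_0(dx) - o(1)$, which combined with the upper bound yields ${\bf J}_i^n(\beta^i,(\bar\beta)_{j\neq i}) \geq {\bf J}_i^n(\bar\beta,\dots,\bar\beta) - o(1)$, i.e.\ the claim. I would split this into two steps: (i) replace the $n$-player running cost by the frozen-flow cost $\int_0^T k(s,y^i(s),m(s,y^i(s)))\,ds$, using $\mu_n^{-i}(s)\to m(s)$; and (ii) bound the frozen-flow cost of any admissible trajectory started at $x$ below by $u(0,x)$. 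Step (ii) is a dynamic programming inequality for the weak value function: using the approximating sequence of Corollary \ref{cor:approximation}, for each Lipschitz $f_n \rightharpoonup k(\cdot,\cdot,m)$ the viscosity solution $u_n$ satisfies $u_n(0,y^i(0)) \leq \int_0^T f_n(s,y^i(s))\,ds + u_T(y^i(T))$ for every admissible $y^i$, and $u_n \to u$ almost everywhere on $\{m > 0\}$.

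The hard part --- and, I believe, the reason the statement is only conjectural --- is that both the replacement in step (i) and the passage from $f_n$ to $k(\cdot,\cdot,m)$ in step (ii) require evaluating an $L^q$ function (respectively a weakly $L^p$-convergent sequence) along the trajectory $y^i$, which is chosen adversarially and may concentrate on a spatial set of vanishing measure; weak $L^p$ convergence and $L^q$ convergence of the mollified empirical densities give no control over such line integrals. This is precisely the asymmetry with the upper bound, where $y^i \sim \eta$ has an absolutely continuous occupation measure and the line integrals become honest spatial integrals. I see two ways to close the gap, both of which are what the ``certain assumptions on $k$'' should provide: either impose enough structure on $k$ and the data that the optimal density $m$, hence $f = k(\cdot,\cdot,m)$, is continuous --- so that $\rho_{\delta_n}\ast\mu_n^{-i}(s) \to m(s,\cdot)$ uniformly and the classical comparison argument applies verbatim --- or restrict the admissible deviations $\beta^i$ to a class whose occupation measures remain uniformly absolutely continuous. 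A final, routine point is to check that the disintegration $(\eta_x)$ and the representing map $\bar\beta$ furnished by the Blackwell--Dubins theorem do yield, for $m_0$-a.e.\ $x$, a genuine admissible open-loop control for player $i$; this follows from the superposition structure, since by construction $\eta$ is concentrated on integral curves of a vector field taking values in $c(\cdot,A)$ (Theorem \ref{thm:superposition}).
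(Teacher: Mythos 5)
You are not overlooking a proof in the paper: the statement appears there only as a conjecture, and the author explicitly records why it is left open (the convolution--smoothing step of Cardaliaguet's argument, which turns a weak Hamilton--Jacobi subsolution into an approximate classical one, is unavailable for Hamiltonians of the form $c(x)|Du|$ with merely $BV$ solutions). So the question is whether your sketch closes that gap, and by your own admission it does not. Your outline is the standard passage from a mean field game solution to an approximate Nash equilibrium, and the upper bound --- equilibrium cost converging to $\int_{\bb{T}^N} u(0,x)\,dm_0(x)$ via i.i.d.\ sampling from $\eta$, absolute continuity of $e_s\#\eta = m(s)$, mollified empirical densities converging in $L^q$ for a suitably slow $\delta_n$, and the weak identities (\ref{eq:ibp_weak}) --- is plausible modulo quantitative propagation-of-chaos estimates. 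The genuine gap is exactly where you place it: the lower bound for an arbitrary deviation $\beta^i$ requires evaluating the running cost $k(\cdot,\cdot,m) \in L^p$, or the weakly convergent Lipschitz approximations $f_n$ of Corollary \ref{cor:approximation}, along a single adversarial trajectory, and neither weak $L^p$ convergence of $f_n$ nor a.e.\ convergence of $u_n$ on $\{m>0\}$ gives any control at the point $(0,y^i(0))$ or along the Lebesgue-null graph of $y^i$. This is the same obstruction the paper cites in different language: in the superlinear case it is precisely the smoothed subsolution, differentiable along any admissible trajectory, that converts the dynamic programming inequality into one valid for deviators; without it your step (ii) restates the difficulty rather than resolving it.

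Your two proposed remedies do not rescue the argument as written. Continuity of $m$ (hence of $f = k(\cdot,\cdot,m)$) is established nowhere in the paper and sits uneasily with the loss-of-regularity phenomenon of Section \ref{sec:loss_of_continuity}; if you want to pursue that route you would have to prove such regularity as a separate theorem under additional structural hypotheses, which is itself a substantial open problem here. Restricting deviations to strategies with uniformly absolutely continuous occupation measures changes the game, and hence the statement, since the conjecture quantifies over all strategies $\beta^i$. In short, your proposal is a faithful reconstruction of the intended strategy and a correct diagnosis of why the conjecture is open, but it is not a proof, and it should not be presented as one.
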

One might hope to prove such a conjecture based on the approach of Cardaliaguet in \cite{cardaliaguet13}. However, at one key point in the proof one uses the convolution smoothing technique to obtain smooth solutions to the Hamilton-Jacobi equations as approximations of weak solutions. This requires precisely that assumption on $H(x,Du)$ which we have tried to circumvent in this study. Therefore, the question of formulating a different proof remains open.

{\bf Acknowledgement.} The author wishes to thank Hasnaa Zidani (Commands team, ENSTA ParisTech and Inria Saclay) for her support as a mentor during his post-doc at Inria Saclay, for introducing him to the problem, and for many fruitful discussions. He would also like to thank Pierre Cardaliaguet (Paris Dauphine) for many helpful discussions and insights.

\bibliographystyle{amsalpha}
\bibliography{optimal_control_HJ_eqns}

\end{document}